\documentclass[final,leqno]{siamltex}



\usepackage{diagbox}
\usepackage{multirow}
\usepackage{amsfonts,amssymb} 
\usepackage{amsmath} 
\usepackage{color} 
\usepackage{graphicx} 
\usepackage{epsfig,mathrsfs} 
\usepackage[bf,SL,BF]{subfigure} 
\usepackage{fancyhdr} 
\usepackage{CJK} 
\usepackage{caption} 
\usepackage{wrapfig} 
\usepackage{cases} 
\usepackage{bm}
\usepackage{algorithm}
\usepackage{algorithmic}

\newtheorem{remark}{Remark}[section] 
\newtheorem{example}{Example}[section] 

\usepackage{booktabs}
\usepackage{mathtools}

\title{Modified BDF2 schemes for subdiffusion models with a singular source term}

\author{Minghua Chen \thanks{School of Mathematics and Statistics, Gansu Key Laboratory of Applied Mathematics and Complex Systems,
 Lanzhou University, Lanzhou 730000, P.R. China. 
Email address: chenmh@lzu.edu.cn}
 \and  Jiankang Shi \thanks{School of Mathematics and Statistics, Gansu Key Laboratory of Applied Mathematics and Complex Systems,
 Lanzhou University, Lanzhou 730000, P.R. China. 
Email address: shijk20@lzu.edu.cn}
 \and  Zhi Zhou\thanks{Department of Applied Mathematics, The Hong Kong Polytechnic University, Kowloon, Hong Kong, P.R. China. 
Email address: zhizhou@polyu.edu.hk}
}

\begin{document}

\maketitle

\begin{abstract}
The aim of this paper is to study the time stepping scheme for
approximately solving the subdiffusion equation with a weakly singular source term.
In this case, many popular time stepping schemes, including the correction of high-order  BDF methods,
may lose their high-order accuracy.
To fill in this gap, in this paper, we develop a novel time stepping scheme, where
the source term is regularized by using a $k$-fold integral-derivative
and  the equation is discretized by using  a modified BDF2 convolution quadrature.
We prove that the proposed time stepping scheme is second-order,
even if the source term is nonsmooth in time and incompatible with the initial data.
Numerical results are presented to support the theoretical results.
 \end{abstract}

\begin{keywords}
subdiffusion, modified BDF2 schemes,  singular source term, error estimate
\end{keywords}

\begin{AMS}
\end{AMS}

\pagestyle{myheadings}
\thispagestyle{plain}
\markboth{M. CHEN, J. SHI,  AND Z. ZHOU}{IDK-BDF2 FOR SUBDIFFUSION}

\section{Introduction}\label{Se:intro}

For anomalous, non-Brownian diffusion, a mean squared displacement often follows the following power-law
$$\langle x^2(t) \rangle \simeq K_\alpha t^\alpha.$$
Prominent examples for subdiffusion  include
the classical charge carrier transport in amorphous semiconductors, tracer diffusion in subsurface aquifers,
 porous systems, dynamics of a bead in a polymeric network,
 or the motion of passive tracers in living biological cells \cite{Metzler:19,Metzler:00}.  Subdiffusion of this type is characterised by a long-tailed waiting time
 probability density function  $\psi (t) \simeq  t^{-1-\alpha}$, corresponding to the time-fractional diffusion equation with and without an external force field
   \cite[Eq. (88)]{Metzler:00}
\begin{gather*}
\partial_t u(x,t)-\partial^{1-\alpha}_t Au(x,t)=f(x,t),~~0<\alpha<1. \tag{$\spadesuit$}
\end{gather*}
Here   $f$ is a given source function,  and the operator $A=\Delta$  denotes Laplacian on a polyhedral domain $\Omega \subset \mathbb{R}^d$ ($d=1,2,3$) with a homogenous Dirichlet boundary condition.
The fractional derivative is taken in the  Riemann-Liouville sense, that is, $\partial^{1-\alpha}_t f=\partial_t J^\alpha f$ with
 the fractional integration operator
 $$J^\alpha f(t)=\frac{1}{\Gamma(\alpha)}\int_0^t(t-\tau)^{\alpha-1}f(\tau)d\tau=\frac{1}{\Gamma(\alpha)}t^{\alpha-1}*f(t),$$
and $*$ denotes the Laplace convolution: $(f*g)(t)=\int_0^t f(t-\tau)g(\tau)d\tau$.

Since the Riemann-Liouvile fractional derivative and the Caputo fractional derivative can be written in the form   \cite[p. 76]{Podlubny:99}
$$\partial^{\alpha}_t u(x,t)={^C\!{D}}^{\alpha}_{t} u(x,t) +\frac{1}{\Gamma(1-\alpha)}t^{-\alpha}u(x,0),$$
which implies that the equivalent form of $(\spadesuit)$ can be rewritten  as
\begin{gather*}
\partial_t u(x,t)-{^C\!{D}}^{1-\alpha}_t Au(x,t)=f(x,t)+\frac{Au(x,0)}{\Gamma(\alpha)}t^{-(1-\alpha)},~~0<\alpha<1 \tag{$\heartsuit$}
\end{gather*}
with the Caputo fractional derivative
\begin{equation*}\label{Cfd}
  {^C\!{D}}^{\alpha}_{t}u(t) = \frac{1}{\Gamma(1-\alpha)} \int^{t}_{0} {(t-s)^{-\alpha}u'(s)} ds,~~  0<t\leq T.
\end{equation*}

Applying the fractional integration operator $J^{1-\alpha}$ to both sides of $(\spadesuit)$, we obtain the
equivalent form of $(\spadesuit)$ as, see \cite[Eq. (1.6)]{McLean:19} or \cite[Eq. (2.3)]{SZS:22}, namely,
\begin{gather*}
{^C\!{D}}^{\alpha}_{t} u(x,t) \!-\! A u(x,t)=\frac{1}{\Gamma(1-\alpha)}t^{-\alpha}*f(x,t)-\frac{J^\alpha Au(x,t)|_{t=0}}{\Gamma(1-\alpha)}t^{-\alpha}, ~0<\alpha<1. \tag{$\clubsuit$}
\end{gather*}

As another example, the fractal mobile/immobile models for solute transport associate with power law decay PDF describing random waiting times in the immobile zone,
leads to the following models \cite[Eq. (15)]{SBMB:03}
\begin{gather*}
 \partial_t u(x,t)+{^C\!{D}}^{\alpha}_{t} u(x,t) - A u(x,t)=-\frac{1}{\Gamma(1-\alpha)}t^{-\alpha}u(x,0), ~0<\alpha<1. \tag{$\diamondsuit$}
\end{gather*}

Note that  the right hand side in aforementioned PDE models  ($\spadesuit$)-($\diamondsuit$)
might be nonsmooth in the time variable. In this paper, we consider the subdiffusion model with weakly singular source term:
\begin{equation} \label{fee}
\begin{aligned}
{^C\!{D}}^{\alpha}_{t} u(x,t) - A u(x,t)=g(x,t):=t^{\mu}\circ f(x,t)
\end{aligned}
\end{equation}
with the initial condition  $u(x,0)=u_0(x):=v$,  and the homogeneous Dirichlet boundary conditions.
The symbol $\circ$ can be either the convolution $*$  or the  product, and $\mu$ is a parameter such that
$$\mu >-1~~ \text{if} ~\circ~ \text{denotes convolution},~~ \text{and}~~ \mu \geq -\alpha~~ \text{if} ~\circ~ \text{denotes product}.$$
The well-posedness could be proved using the separation of variables and Mittag--Leffler functions, see e.g. \cite[Eq. (2.11)]{SY:11}.

Note that many existing time stepping schemes may lose their high-order accuracy when the source term is nonsmooth in the time variable.
As an example, it was reported in \cite[Section 4.1]{JLZ:2017} that the convolution quadrature generated by $k$ step BDF method (with initial correction)
converges with order $O(\tau^{1+\mu})$, provided that the source term behaves like $t^{\mu}$, $\mu>0$, see Lemma 3.2 in \cite{WZ:2020}, also see Table \ref{table:6.1}.
 The aim of this paper is to fill in this gap.


It is well-known that the smoothness of all the data of \eqref{fee} (e.g., $f=0$) do not imply the smoothness of the solution $u$ which has an initial layer  at $t\rightarrow 0^{+}$ (i.e., unbounded near $t=0$)
\cite{Podlubny:99,SY:11,SOG:2017}.
There are already two predominant  discretization techniques in time direction to restore the desired convergence rate for subdiffusion  under appropriate regularity source function.
The first type is that the nonuniform time meshes/graded meshes are employed to compensate/capture  the singularity of
the continuous solution near $t=0$ under  the appropriate regularity  source function and initial data, see \cite{CJB:2021,Kopteva:2021,Liao:2018,McLeanMustapha:2015,MustaphaAbdallahFurati:2014,Mustapha:20,SOG:2017}.
See also spectral method with specially designed basis functions \cite{Chen:2020, HouXu:2017,ZayernouriKarniadakis:2013}.
The second  type is that,  based on correction of high-order  BDF$k$  or   $L_k$ approximation, the desired  high-order convergence rates can be restored even for nonsmooth initial data.
For fractional ODEs, one idea is to use starting quadrature weights to correct the fractional integrals  \cite{Lubich:86} (or fractional substantial calculus  \cite{CD:15})
 $$J^\alpha g(t)=\frac{1}{\Gamma(\alpha)}\int_0^t(t-\tau)^{\alpha-1}g(\tau)d\tau~~{\rm with}~~g(t)=t^\mu f(t),~~ \mu>-1,$$
where the algorithms  rely  on expanding the solution into power series of $t$.
For fractional PDEs, a common practice is to split the source term into
$$
g(t)=g(0)+\sum^{k-1}_{l=1}\frac{t^{l}}{l!}\partial^{l}_{t}g(0)+\frac{t^{k-1}}{(k-1)!} \ast \partial^{k}_{t}g.
$$
Then approximating $g(0)$ by $\partial_{\tau}J^1g(0)$ may to a modified BDF2 scheme with correction in the first step \cite{Cuesta:06}.
The correction of  high-order  BDF$k$  or   $L_k$ convolution quadrature are well developed in   \cite{JLZ:2017,SC:2020,YKF:2018}  when  the source term sufficiently smooth in the time variable.
Performing  the integral on both sides for \eqref{fee}, e.g,  approximate $u(t)$ by $\partial_{\tau}J^1u(t)$,
 a  second-order  time-stepping schemes  are given in \cite{ZT:2022}, where the singular source function is
$g(x,t)=t^{\mu}f(x)$ with a spatially dependent function $f$.
How to deal with a more general source term, which might be nonsmooth in the time variable, is still unavailable in the literature.

 In this paper,
we develop a novel second-order time stepping scheme (IDk-BDF2) for solving the  subdiffusion \eqref{fee} with a weakly singular  source term,
where the   low regularly  source term is regularized by using a $k$-fold integral-derivative (IDk)
and  the equation is discretized by using  a modified BDF2 convolution quadrature.
We prove that the proposed time stepping scheme is second-order,
even if the source term is nonsmooth in time and incompatible with the initial data.
Numerical results are presented to support the theoretical results.


The paper is organized as follows. In Section \ref{Se:corre}, we introduce the development of the IDk-BDF2 scheme for model \eqref{fee}.
In Section 3 and 4, based on operational calculus,  the detailed convergence analysis of IDk-BDF2 are provided, respectively,  for general source function   $f(x,t)$ and certain form  $t^{\mu}f(x)  $.
Then the desired results with the  low regularity source term  $t^{\mu}\circ f(x,t)$  are obtained in Section 5.
To show the effectiveness of the presented schemes, the results of numerical experiments are reported in Section 6.
Finally, we conclude the paper with some remarks in the last section.

\section{IDk-BDF2 Method}\label{Se:corre}
In this section, we first provide   IDk-BDF2 method  for solving subdiffusion  \eqref{fee} if the source term $g(x,t)$ possess the  mild regularity.
Let $V(t)=u(t)-v$ with $V(0) = 0$. Then the model \eqref{fee}  can be  rewritten   as
\begin{equation}\label{rfee}
 \partial^{\alpha}_{t} V(t) - A V(t)= Av + g(t), \quad 0<t\leq T.
\end{equation}

From \cite{LST:1996} and \cite{Thomee:2006}, we know that the operator $A$ satisfies the following resolvent estimate
\begin{equation*}\label{resolvent estimate}
\left\| (z-A)^{-1} \right\| \leq c_{\phi} |z|^{-1} \quad \forall z\in \Sigma_{\phi}
\end{equation*}
for all $\phi\in (\pi/2, \pi)$, where $\Sigma_{\theta}:=\{ z\in \mathbb{C}\backslash \{0\}:|\arg z| < \theta \}$ is a sector of the complex plane $\mathbb{C}$.
Hence, $z^{\alpha}\in \Sigma_{\theta '}$ with $\theta '=\alpha\theta<\theta<\pi$ for all $z\in \Sigma_{\theta}$. Then, there exists  a positive constant $c$ such that
\begin{equation}\label{fractional resolvent estimate}
\left\| \left(z^{\alpha}-A\right)^{-1} \right\| \leq c |z|^{-\alpha} \quad \forall z\in \Sigma_{\theta}.
\end{equation}

\subsection{Discretization schemes}
Let $G(t)=J^1g(t)$ and  $ \mathcal{G}(t)=J^2g(t) $.
By first fundamental theorem of calculus,  we may  rewrite \eqref{rfee} as
\begin{equation}\label{rrfee}
{\rm ID1~ Method:}~~ \partial^{\alpha}_{t} V(t) - A V(t)=\partial_{t}( t Av + G(t)), \quad 0<t\leq T,
\end{equation}
\begin{equation}\label{rrrfee}
{\rm ID2~ Method:}~~ \partial^{\alpha}_{t} V(t) - A V(t)=\partial^{2}_{t} \left( \frac{t^{2}}{2} Av + \mathcal{G}(t) \right), \quad 0<t\leq T.
\end{equation}

Let $t_{n}=n \tau, n=0,1,\ldots,N$, be a uniform partition of the time interval $[0,T]$ with the step size $\tau=\frac{T}{N}$, and let $u^{n}$ denote the approximation of $u(t)$ and $g^{n}=g(t_{n})$.
The convolution quadrature generated by BDF2 approximates the Riemann-Liouville fractional derivative $\partial^{\alpha}_{t}\varphi(t_n)$ by
\begin{equation}\label{2.1}
  \partial^{\alpha}_{\tau}\varphi^{n}:=\frac{1}{\tau^{\alpha}}\sum^{n}_{j=0} \omega_{j}\varphi^{n-j}
\end{equation}
with $\varphi^{n}=\varphi(t_{n})$. Here  the weights $\omega_{j}$ are the coefficients in the series expansion
\begin{equation}\label{2.2}
  \delta^{\alpha}_{\tau}(\xi)=\frac{1}{\tau^{\alpha}}\sum^{\infty}_{j=0}\omega_{j}\xi^{j} \quad{\rm with} \quad \delta_{\tau}(\xi):=\frac{1}{\tau}\left(\frac{3}{2}-2\xi + \frac{1}{2} \xi^2\right).
\end{equation}
Then   IDk-BDF2 method for  \eqref{rrfee} and \eqref{rrrfee}   are, respectively,  designed by
\begin{equation}\label{2.3}
~{\rm ID1-BDF2~ Method:}~~~~   \partial^{\alpha}_{\tau} V^{n} - AV^{n}= \partial_{\tau}( t_{n} Av + G^{n}).
\end{equation}
\begin{equation}\label{4.1}
~~~~~{\rm ID2-BDF2~ Method:}~~~~  \partial^{\alpha}_{\tau} V^{n} - AV^{n}= \partial^{2}_{\tau}  \left( \frac{t^{2}_{n}}{2} Av + \mathcal{G}^{n} \right).
\end{equation}
\begin{remark}
In the time semidiscrete approximation \eqref{2.3} and \eqref{4.1}, we require  $v\in \mathcal{D}(A)$, i.e., the initial data $v$ is reasonably smooth.
However one can use the schemes  \eqref{2.3} and  \eqref{4.1} to prove the error estimates with the nonsmooth data $v\in L^2(\Omega)$, see Theorems \ref{Theorem5.8} and \ref{Theorem5.8888}.
Here, we mainly focus on the time semidiscrete approximation  \eqref{2.3} and  \eqref{4.1}, since the spatial discretization is well understood.
For example, we choose $v_h=R_hv$ if $v\in \mathcal{D}(A)$ and $v_h=P_hv$ if $v\in L^2(\Omega)$ following \cite{Thomee:2006,WYY:2020}.
\end{remark}

\subsection{Solution representation for \eqref{rrfee} and \eqref{rrrfee}}
Taking the Laplace transform in both sides of  \eqref{rrfee}, it leads to
\begin{equation*}
\widehat{V}(z)= (z^{\alpha}-A)^{-1}\left( z^{-1}Av   + z \widehat{G}(z) \right).
\end{equation*}
By the inverse Laplace transform, there exists \cite{JLZ:2017}
\begin{equation}\label{LT}
V(t) =  \frac{1}{2\pi i} \int_{\Gamma_{\theta, \kappa}} e^{zt}  (z^{\alpha}-A)^{-1}\left( z^{-1}Av   + z \widehat{G}(z) \right) dz
\end{equation}
with
\begin{equation}\label{Gamma}
\Gamma_{\theta, \kappa}=\{ z\in \mathbb{C}: |z|=\kappa, |\arg z|\leq \theta \} \cup \{ z\in \mathbb{C}: z=re^{\pm i\theta}, r\geq \kappa \}
\end{equation}
and  $\theta \in (\pi/2, \pi)$, $\kappa>0$.

Similarly, applying  the Laplace transform in both sides of  \eqref{rrrfee}, it yields
\begin{equation*}
\widehat{V}(z)= (z^{\alpha}-A)^{-1}\left( z^{-1}Av   + z^2 \widehat{\mathcal{G}}(z) \right).
\end{equation*}
By the inverse Laplace transform, we obtain   
\begin{equation}\label{LT2}
\begin{split}
V(t)
&=  \frac{1}{2\pi i} \int_{\Gamma_{\theta, \kappa}} e^{zt}  (z^{\alpha}-A)^{-1}\left( z^{-1}Av   + z^{2} \widehat{\mathcal{G}}(z) \right) dz.
\end{split}
\end{equation}

\subsection{Discrete solution representation  for \eqref{2.3} and \eqref{4.1}}
Given a sequence $(\kappa_n)_0^\infty$  and take
$\widetilde{\kappa}(\zeta)=\sum_{n=0}^{\infty}\kappa_n \zeta^n$
to be its generating power series.
\begin{lemma}\label{Lemma2.1}
Let $\delta_{\tau}$ be  given in  \eqref{2.2} and $\gamma_{1}(\xi)=\frac{\xi}{(1-\xi)^2}$,  $G(t)=J^1g(t)$.
Then the discrete solution of \eqref{2.3} is  represented by
\begin{equation*}
V^{n}=\frac{1}{2\pi i}\int_{\Gamma^{\tau}_{\theta,\kappa}} e^{zt_n} (\delta^{\alpha}_{\tau}(e^{-z\tau})-A)^{-1} \delta_{\tau}(e^{-z\tau}) \tau \left( \gamma_{1}(e^{-z\tau}) \tau A v + \widetilde{G}(e^{-z\tau}) \right) dz
\end{equation*}
with $\Gamma^{\tau}_{\theta, \kappa}=\{z\in \Gamma_{\theta, \kappa}: |\Im z|\leq \pi / \tau\}$.
\end{lemma}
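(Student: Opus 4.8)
The plan is to mimic the derivation of the continuous solution representation \eqref{LT}, but at the discrete level, using the generating-function (symbol) calculus for convolution quadrature. The first step is to rewrite the scheme \eqref{2.3} as a discrete convolution equation in the sequences $(V^n)$, $(G^n)$, $(t_n)$, and then pass to generating power series. Recall that the BDF2 convolution quadrature acts as $\sum_n (\partial^\alpha_\tau V^n)\zeta^n = \delta^\alpha_\tau(\zeta)\,\widetilde V(\zeta)$, and likewise $\sum_n(\partial_\tau w^n)\zeta^n = \delta_\tau(\zeta)\,\widetilde w(\zeta)$ for the first-order operator $\partial_\tau$ whose symbol is $\delta_\tau$. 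Applying this to both sides of \eqref{2.3} and using $\widetilde V(0)=V^0=0$, I obtain the identity of power series
\begin{equation*}
\bigl(\delta^\alpha_\tau(\zeta)-A\bigr)\widetilde V(\zeta) = \delta_\tau(\zeta)\Bigl(\tau\,\widetilde{(t_n)}(\zeta)\,Av + \widetilde G(\zeta)\Bigr),
\end{equation*}
where $\widetilde{(t_n)}(\zeta)=\sum_{n\ge 0} t_n\zeta^n = \tau\sum_{n\ge 0} n\zeta^n = \tau\,\xi/(1-\xi)^2 = \tau\,\gamma_1(\zeta)$. Hence the term $\tau\,\widetilde{(t_n)}(\zeta)\,Av$ becomes $\tau^2\gamma_1(\zeta)Av$, which is exactly the factor appearing in the claimed formula. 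Solving gives
\begin{equation*}
\widetilde V(\zeta) = \bigl(\delta^\alpha_\tau(\zeta)-A\bigr)^{-1}\delta_\tau(\zeta)\Bigl(\tau^2\gamma_1(\zeta)Av + \tau\,\widetilde G(\zeta)\Bigr),
\end{equation*}
valid as a formal (and, for $|\zeta|$ small, convergent) power series, since $\delta^\alpha_\tau(\zeta)$ lies in a sector where the resolvent bound \eqref{fractional resolvent estimate} applies.

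The second step is to extract $V^n$ as the $n$-th coefficient of $\widetilde V(\zeta)$ via Cauchy's integral formula on a small circle $|\zeta|=e^{-\kappa\tau}$, and then make the substitution $\zeta=e^{-z\tau}$, which maps that circle to the vertical segment $\Re z=\kappa$, $|\Im z|\le \pi/\tau$. Under this change of variables $d\zeta = -\tau e^{-z\tau}dz$ and $\zeta^{-n-1} = e^{z\tau(n+1)}$, so $\zeta^{-n-1}d\zeta = -\tau e^{zt_n}\,dz$; the minus sign and orientation combine to give
\begin{equation*}
V^n = \frac{1}{2\pi i}\int_{\Re z = \kappa,\ |\Im z|\le \pi/\tau} e^{zt_n}\,\bigl(\delta^\alpha_\tau(e^{-z\tau})-A\bigr)^{-1}\delta_\tau(e^{-z\tau})\,\tau\Bigl(\tau\gamma_1(e^{-z\tau})Av + \widetilde G(e^{-z\tau})\Bigr)\,dz,
\end{equation*}
after absorbing one factor of $\tau$ from the Jacobian. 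Note the $\tau$-placement matches the statement: one $\tau$ sits outside the bracket, and the $Av$-term carries the extra $\tau$ to make $\tau^2\gamma_1$.

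The third step is to justify deforming the integration contour from the vertical segment to $\Gamma^\tau_{\theta,\kappa}$. The integrand is $2\pi/\tau$-periodic in $\Im z$ along the vertical line and, by the sectorial mapping property of $\delta_\tau$ (namely $\delta_\tau(e^{-z\tau})\in\Sigma_{\theta'}$ and hence $\delta^\alpha_\tau(e^{-z\tau})\in\Sigma_{\alpha\theta'}$ for $z$ in the relevant region — this is the standard Lubich-type argument already invoked for the continuous case), is analytic in the region bounded by the vertical segment and the contour $\Gamma^\tau_{\theta,\kappa}$; the horizontal pieces at $\Im z=\pm\pi/\tau$ cancel by periodicity. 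Deforming then yields precisely the stated representation over $\Gamma^\tau_{\theta,\kappa}$.

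The main obstacle is the bookkeeping in the second and third steps: tracking the factors of $\tau$ through the Cauchy formula and the $\zeta=e^{-z\tau}$ substitution, and verifying analyticity of $(\delta^\alpha_\tau(e^{-z\tau})-A)^{-1}$ in the closed region swept out by the contour deformation (equivalently, that $\delta^\alpha_\tau(e^{-z\tau})$ stays in the resolvent set of $A$, i.e., in the sector $\Sigma_{\alpha\theta'}$), so that no poles are crossed and the horizontal contributions at $\pm i\pi/\tau$ cancel. This sectoriality of the BDF2 symbol on $|\Im z|\le\pi/\tau$ is classical but must be cited or checked; once it is in hand, the rest is a routine transcription of the continuous argument.
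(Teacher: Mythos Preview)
Your approach is exactly the paper's: multiply \eqref{2.3} by $\xi^n$, sum to pass to generating series, solve for $\widetilde V(\xi)$, recover $V^n$ by Cauchy's formula with the substitution $\xi=e^{-z\tau}$, and then deform the vertical segment to $\Gamma^\tau_{\theta,\kappa}$ using periodicity and the sectorial mapping property of $\delta_\tau$.

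There is one bookkeeping slip to fix, precisely the $\tau$-tracking you flagged as the main obstacle. The generating function of the right-hand side of \eqref{2.3} is $\delta_\tau(\zeta)\bigl(\widetilde{(t_n)}(\zeta)Av+\widetilde G(\zeta)\bigr)$, with no extra $\tau$ in front of $\widetilde{(t_n)}$; since $\widetilde{(t_n)}(\zeta)=\tau\gamma_1(\zeta)$ this gives
\[
\widetilde V(\zeta)=\bigl(\delta^\alpha_\tau(\zeta)-A\bigr)^{-1}\delta_\tau(\zeta)\bigl(\tau\gamma_1(\zeta)Av+\widetilde G(\zeta)\bigr),
\]
which is the paper's \eqref{ads2.17}. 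The single Jacobian factor $\tau$ from $\xi=e^{-z\tau}$ then yields the claimed representation. Your intermediate formula for $\widetilde V$ carries an extra overall factor of $\tau$ (you inserted a spurious $\tau$ in the first display and then another on $\widetilde G$ when ``solving''); if that version were fed into the Cauchy step you would obtain $\tau V^n$ rather than $V^n$. Once this is corrected, the argument is complete and identical to the paper's.
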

\begin{proof}
Multiplying the \eqref{2.3} by $\xi^{n}$ and summing over $n$ with $V^0=0$, we obtain
\begin{equation*}
\begin{split}
&\sum^{\infty}_{n=1} \partial^{\alpha}_{\tau} V^{n} \xi^{n} - \sum^{\infty}_{n=1}  AV^{n}  \xi^{n} = \sum^{\infty}_{n=1} \partial_{\tau}( t_{n} Av + G^{n}) \xi^{n}.
\end{split}
\end{equation*}
From  \eqref{2.1} and \eqref{2.2}, we have
\begin{equation*}
\begin{split}
\sum^{\infty}_{n=1} \partial^{\alpha}_{\tau} V^{n} \xi^{n}
=& \sum^{\infty}_{n=1} \frac{1}{\tau^{\alpha}}\sum^{n}_{j=0} \omega_{j} V^{n-j} \xi^{n}
=  \sum^{\infty}_{n=0} \frac{1}{\tau^{\alpha}}\sum^{n}_{j=0} \omega_{j} V^{n-j} \xi^{n}
=  \sum^{\infty}_{j=0} \frac{1}{\tau^{\alpha}}\sum^{\infty}_{n=j} \omega_{j} V^{n-j} \xi^{n}\\
=& \sum^{\infty}_{j=0} \frac{1}{\tau^{\alpha}}\sum^{\infty}_{n=0} \omega_{j} V^{n} \xi^{n+j}
=  \frac{1}{\tau^{\alpha}} \sum^{\infty}_{j=0}  \omega_{j}  \xi^{j} \sum^{\infty}_{n=0}  V^{n} \xi^{n}
=  \delta^{\alpha}_{\tau} (\xi) \widetilde{V} (\xi).
\end{split}
\end{equation*}
Similarly, one has
\begin{equation*}
\sum^{\infty}_{n=1} \partial_{\tau} t_{n} Av  \xi^{n} = \delta_{\tau}(\xi) \gamma_{1}(\xi) \tau Av, \quad
\sum^{\infty}_{n=1} \partial_{\tau}  G^{n} \xi^{n} =  \delta_{\tau}(\xi) \widetilde{G} (\xi)
\end{equation*}
with $\gamma_{1}(\xi)=\frac{\xi}{(1-\xi)^2}$.
It leads to
\begin{equation}\label{ads2.17}
\widetilde{V}(\xi) =   \left( \delta^{\alpha}_{\tau}(\xi) -A\right)^{-1} \delta_{\tau}(\xi) \left(\gamma_{1}(\xi) \tau Av + \widetilde{G} (\xi) \right).
\end{equation}
According to  Cauchy's integral formula, and the change of variables $\xi=e^{-z\tau}$, and  Cauchy's theorem, one has \cite{JLZ:2017}
\begin{equation}\label{DLT}
V^{n}=\frac{\tau}{2\pi i}\int_{\Gamma^{\tau}_{\theta,\kappa}} e^{zt_n} \left( \delta^{\alpha}_{\tau}(e^{-z\tau}) -A\right)^{-1} \delta_{\tau}(e^{-z\tau}) \left( \gamma_{1}(e^{-z\tau}) \tau Av + \widetilde{G} (e^{-z\tau}) \right)dz
\end{equation}
with $\Gamma^{\tau}_{\theta, \kappa}=\{z\in \Gamma_{\theta, \kappa}: |\Im z|\leq \pi / \tau\}$.
The proof is completed.
\end{proof}

\begin{lemma}\label{addLemma2.1}
Let $\delta_{\tau}$ be  given in  \eqref{2.2} and $\gamma_{2}(\xi)= \frac{\xi+ \xi^{2}}{(1-\xi)^3}$,  $\mathcal{G}(t)=J^2g(t)$.
Then the discrete solution of \eqref{4.1} is  represented by
\begin{equation*}
V^{n}=\frac{\tau}{2\pi i}\int_{\Gamma^{\tau}_{\theta,\kappa}} e^{zt_n} \left( \delta^{\alpha}_{\tau}(e^{-z\tau}) -A\right)^{-1} \delta^{2}_{\tau}(e^{-z\tau}) \left( \frac{\gamma_{2}(e^{-z\tau})}{2} \tau^{2} Av + \widetilde{\mathcal{G}} (e^{-z\tau}) \right)dz
\end{equation*}
with $\Gamma^{\tau}_{\theta, \kappa}=\{z\in \Gamma_{\theta, \kappa}: |\Im z|\leq \pi / \tau\}$.
\end{lemma}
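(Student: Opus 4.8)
The plan is to reproduce, line by line, the generating-function argument of Lemma~\ref{Lemma2.1}; the only genuinely new ingredients are the second-order convolution quadrature $\partial^{2}_{\tau}$ and the closed form of the generating series of the sequence $(t_{n}^{2}/2)$. First I would multiply the scheme \eqref{4.1} by $\xi^{n}$ and sum over $n\ge 1$. Since $V^{0}=0$, $t_{0}=0$ and $\mathcal{G}(0)=J^{2}g(0)=0$, the missing $n=0$ term contributes nothing, so all sums may equally be taken over $n\ge 0$. Exactly as in Lemma~\ref{Lemma2.1}, rearranging the double sum shows the two terms on the left produce $\delta^{\alpha}_{\tau}(\xi)\widetilde V(\xi)$ and $A\widetilde V(\xi)$.

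For the right-hand side I would use that the convolution quadrature associated with $\partial^{2}_{t}$ has, in accordance with \eqref{2.2}, generating series $\delta^{2}_{\tau}(\xi)=\delta_{\tau}(\xi)^{2}$, so that after the same index shuffle $\sum_{n\ge 0}\partial^{2}_{\tau}\varphi^{n}\xi^{n}=\delta^{2}_{\tau}(\xi)\widetilde\varphi(\xi)$ for every sequence with generating series $\widetilde\varphi$. It then remains to identify the two relevant series: for $\varphi^{n}=\mathcal{G}^{n}$ this is $\widetilde{\mathcal{G}}(\xi)$ by definition, while for $\varphi^{n}=t_{n}^{2}/2=\tau^{2}n^{2}/2$ the elementary identity $\sum_{n\ge 0}n^{2}\xi^{n}=\frac{\xi+\xi^{2}}{(1-\xi)^{3}}=\gamma_{2}(\xi)$ gives $\sum_{n\ge 1}\partial^{2}_{\tau}\bigl(\frac{t_{n}^{2}}{2}Av\bigr)\xi^{n}=\frac{\gamma_{2}(\xi)}{2}\tau^{2}\delta^{2}_{\tau}(\xi)Av$. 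Collecting terms and inverting $\delta^{\alpha}_{\tau}(\xi)-A$ then yields the discrete transfer identity, the exact analogue of \eqref{ads2.17},
\begin{equation*}
\widetilde V(\xi)=\bigl(\delta^{\alpha}_{\tau}(\xi)-A\bigr)^{-1}\delta^{2}_{\tau}(\xi)\left(\frac{\gamma_{2}(\xi)}{2}\tau^{2}Av+\widetilde{\mathcal{G}}(\xi)\right).
\end{equation*}

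Finally I would recover $V^{n}$ from $\widetilde V$ by Cauchy's integral formula, $V^{n}=\frac{1}{2\pi i}\int_{|\xi|=\rho}\xi^{-n-1}\widetilde V(\xi)\,d\xi$ for a sufficiently small $\rho>0$, then substitute $\xi=e^{-z\tau}$, which converts the circle into the truncated vertical segment $\{z:|\Im z|\le\pi/\tau\}$ and, after deforming that segment onto $\Gamma^{\tau}_{\theta,\kappa}$ by Cauchy's theorem, reproduces the factor $\tau$ and the kernel $e^{zt_{n}}$ appearing in the statement; substituting the transfer identity into the resulting contour integral gives precisely the claimed representation. The one delicate point is the justification of this deformation: one must check that $z\mapsto\bigl(\delta^{\alpha}_{\tau}(e^{-z\tau})-A\bigr)^{-1}$ is analytic on the region swept out and decays sufficiently fast, which follows from the resolvent bound \eqref{fractional resolvent estimate} applied with $\delta_{\tau}(e^{-z\tau})$ in place of $z$ together with the sectorial mapping properties of $\delta_{\tau}$ near $\xi=1$. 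This is carried out in detail in the proof of Lemma~\ref{Lemma2.1} and in \cite{JLZ:2017}, so I would simply invoke it here rather than repeat it.
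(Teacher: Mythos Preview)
Your proposal is correct and follows essentially the same route as the paper's proof: multiply \eqref{4.1} by $\xi^{n}$, sum, use the discrete-convolution identity to turn each side into a product of generating series (with $\gamma_{2}(\xi)=\sum n^{2}\xi^{n}$), solve for $\widetilde V(\xi)$, and then recover $V^{n}$ via Cauchy's formula and the substitution $\xi=e^{-z\tau}$, citing \cite{JLZ:2017} for the contour deformation. Your added remarks on why the $n=0$ terms vanish and on the analyticity needed for the deformation are extra detail not spelled out in the paper, but the argument is the same.
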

\begin{proof}
Multiplying the \eqref{4.1} by $\xi^{n}$ and summing over $n$  with $V^0=0$, we obtain
\begin{equation*}
\begin{split}
&\sum^{\infty}_{n=1} \partial^{\alpha}_{\tau} V^{n} \xi^{n} - \sum^{\infty}_{n=1}  AV^{n}  \xi^{n} = \sum^{\infty}_{n=1} \partial^{2}_{\tau} \left( \frac{t^{2}_{n}}{2} Av + \mathcal{G}^{n} \right) \xi^{n}.
\end{split}
\end{equation*}
The similar arguments can be performed as Lemma \ref{Lemma2.1}, it yields
\begin{equation*}
\begin{split}
& \sum^{\infty}_{n=1} \partial^{\alpha}_{\tau} V^{n} \xi^{n} = \delta^{\alpha}_{\tau} (\xi) \widetilde{V} (\xi), \quad
  \sum^{\infty}_{n=1} \partial^{2}_{\tau} t^{2}_{n} Av  \xi^{n} = \delta^{2}_{\tau}(\xi) \gamma_{2}(\xi) \tau^{2} Av, \\
& \sum^{\infty}_{n=1} \partial^{2}_{\tau}  \mathcal{G}^{n} \xi^{n} =  \delta^{2}_{\tau}(\xi) \widetilde{\mathcal{G}} (\xi),~~\gamma_{2}(\xi)= \frac{\xi+ \xi^{2}}{(1-\xi)^3},
\end{split}
\end{equation*}
and
\begin{equation}\label{4.4}
\widetilde{V}(\xi) =   \left( \delta^{\alpha}_{\tau}(\xi) -A\right)^{-1} \delta^{2}_{\tau}(\xi) \left( \frac{\gamma_{2}(\xi)}{2} \tau^{2} Av + \widetilde{\mathcal{G}} (\xi) \right).
\end{equation}
Using    Cauchy's integral formula, and the change of variables $\xi=e^{-z\tau}$, and  Cauchy's theorem, one has
\begin{equation}\label{DLT2}
V^{n}=\frac{\tau}{2\pi i}\int_{\Gamma^{\tau}_{\theta,\kappa}} e^{zt_{n}} \left( \delta^{\alpha}_{\tau}(e^{-z\tau}) -A\right)^{-1} \delta^{2}_{\tau}(e^{-z\tau}) \left( \frac{\gamma_{2}(\xi)}{2} \tau^{2} Av + \widetilde{\mathcal{G}} (e^{-z\tau}) \right)dz
\end{equation}
with $\Gamma^{\tau}_{\theta, \kappa}=\{z\in \Gamma_{\theta, \kappa}: |\Im z|\leq \pi / \tau\}$.
The proof is completed.
\end{proof}

\section{Convergence analysis: General source function $g(x,t)$}\label{Se:conver}
In this section, we provide the detailed convergence analysis of  ID1-BDF2 in \eqref{2.3}  approximation for the subdiffusion \eqref{rrfee}, and ID2-BDF2 can be similarly augmented.
\subsection{A few technical lemmas}
First, we give some lemmas that will be used.
\begin{lemma}\cite{JLZ:2017}\label{Lemma 2.3}
Let $\delta_{\tau}(\xi)$ be given in \eqref{2.2}. Then there  exist the positive constants $c_{1},c_{2}$, $c$ and
$\theta \in (\pi/2, \theta_{\varepsilon})$ with  $\theta_{\varepsilon} \in (\pi/2, \pi),~\forall \varepsilon>0$ such that
\begin{equation*}
\begin{split}
& c_{1}|z|\le |\delta_{\tau}(e^{-z\tau})| \le c_{2}|z|, \quad   |\delta_{\tau}(e^{-z\tau})-z|\le c \tau^{2}|z|^{3}, \\
& |\delta^{\alpha}_{\tau}(e^{-z\tau})-z^{\alpha}|\le c \tau^{2}|z|^{2+\alpha},~\delta_{\tau}(e^{-z\tau}) \in \Sigma_{\pi/2+\varepsilon} \quad {\forall} z\in \Gamma^{\tau}_{\theta,\kappa}.
\end{split}
\end{equation*}
\end{lemma}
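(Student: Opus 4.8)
The plan is to establish the bounds on $\delta_\tau(e^{-z\tau})$ by exploiting the fact that $\delta_\tau(\xi)$ is, up to the factor $1/\tau$, the second-order backward difference symbol, which is a consistent and strictly $A(\theta)$-stable rational approximation to $-\frac{1}{\tau}\log\xi$ near $\xi=1$. Writing $\zeta = e^{-z\tau}$, the key observation is that $\tau\delta_\tau(e^{-z\tau}) = \frac{3}{2} - 2e^{-z\tau} + \frac{1}{2}e^{-2z\tau}$ is an entire function of $z\tau$, and on the contour $\Gamma^\tau_{\theta,\kappa}$ we have $|z\tau| \le $ const (because $|\Im z|\le \pi/\tau$ caps the imaginary part and the rays $re^{\pm i\theta}$ with the truncation force $|z|\tau \lesssim 1$). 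So the whole analysis reduces to a local study of the analytic function $\beta(w) := \frac32 - 2e^{-w} + \frac12 e^{-2w}$ on a bounded subset of the sector $|\arg w|\le\theta$.

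First I would prove the consistency-type estimates. Expanding $\beta(w) = w - \frac{3}{2}w^2\cdot(\text{something})\cdots$; more precisely, $\beta(w) = w + \sum_{j\ge 2} c_j w^j$ with the $c_j$ the Taylor coefficients, so $|\beta(w) - w| \le C|w|^2$ for $|w|\le r_0$. Substituting $w = z\tau$ and dividing by $\tau$ gives $|\delta_\tau(e^{-z\tau}) - z| \le C\tau |z|^2$; to get the sharper $c\tau^2|z|^3$ claimed, I would use that the BDF2 method has order $2$, i.e. $\beta(w) - w = O(w^3)$. One checks this by a direct Taylor expansion: $-2e^{-w} + \tfrac12 e^{-2w} + \tfrac32 = w - \tfrac{?}{}w^3 + \cdots$; the coefficient of $w^2$ must vanish by consistency of order $\ge 1$ combined with the known order-$2$ property of BDF2 (or simply compute: the $w^2$-coefficient is $-2\cdot\frac{1}{2} + \frac12\cdot\frac{4}{2} = -1+1 = 0$). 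Hence $|\beta(w)-w|\le C|w|^3$ on $|w|\le r_0$, which yields $|\delta_\tau(e^{-z\tau}) - z|\le c\tau^2|z|^3$. The estimate $|\delta^\alpha_\tau(e^{-z\tau}) - z^\alpha|\le c\tau^2|z|^{2+\alpha}$ then follows by writing $\delta^\alpha_\tau(e^{-z\tau}) = z^\alpha\big(1 + (\delta_\tau(e^{-z\tau})-z)/z\big)^\alpha$ and using $|(1+\eta)^\alpha - 1|\le C|\eta|$ for $|\eta|$ small, together with the lower bound $|\delta_\tau(e^{-z\tau})|\ge c_1|z|$ to control $z^\alpha$ versus $\delta_\tau^\alpha$ on the principal branch.

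Next, for the two-sided bound $c_1|z|\le|\delta_\tau(e^{-z\tau})|\le c_2|z|$: the upper bound is immediate from $|\beta(w)|\le |w| + C|w|^2 \le (1+Cr_0)|w|$ on $|w|\le r_0$, plus the fact that on the truncated contour $\beta(w)/w$ stays bounded away from the region where $e^{-w}$ blows up. The lower bound is the more delicate half — it is the assertion that $\beta(w)$ has no zeros on the relevant bounded sector except the simple zero at $w=0$ (where $\beta(w)/w\to 1$), so $|\beta(w)|\ge c_1|w|$ there by compactness. This needs the location of the roots of $\frac32 - 2\zeta + \frac12\zeta^2 = 0$, i.e. $\zeta = 2\pm\sqrt{2}$; since $|2-\sqrt2| = 2-\sqrt2 < 1$, the generating polynomial $\frac32 - 2\xi + \frac12\xi^2$ does vanish inside the unit disk, but the point is that $\zeta = e^{-w}$ with $|\arg w|\le\theta<\pi$ and $|w|$ bounded below on $\Gamma^\tau_{\theta,\kappa}\setminus\{0\}$ stays away from the root $2-\sqrt2$ provided $\kappa$ is chosen small and $\theta$ close enough to $\pi/2$; I would argue this by a continuity/compactness argument on the truncated contour. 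Finally, the sectorial containment $\delta_\tau(e^{-z\tau})\in\Sigma_{\pi/2+\varepsilon}$ for $z\in\Gamma^\tau_{\theta,\kappa}$ with $\theta$ chosen close to $\pi/2$ follows from $\arg\beta(w) = \arg w + \arg(\beta(w)/w)$ and $|\arg(\beta(w)/w)|\le C|w|\le C r_0$, which can be made $<\varepsilon$ by shrinking $\kappa$; combined with $|\arg w| = |\arg z|\le\theta$ slightly larger than $\pi/2$, the total argument stays below $\pi/2+\varepsilon$ after enlarging $\theta_\varepsilon$ appropriately.

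The main obstacle I anticipate is the lower bound $|\delta_\tau(e^{-z\tau})|\ge c_1|z|$ uniformly in $\tau$ on the full truncated contour $\Gamma^\tau_{\theta,\kappa}$: near $w=0$ it is a consistency statement, but along the rays $re^{\pm i\theta}$ up to $|\Im w| = \pi$ one must rule out near-cancellations in $\frac32 - 2e^{-w} + \frac12 e^{-2w}$, which is exactly where the $A(\theta)$-stability of BDF2 enters. Since BDF2 is only $A(\theta_0)$-stable for $\theta_0 < 90^\circ$ — it is in fact $A(\alpha)$-stable with $\alpha\approx 90^\circ$ but the scheme is actually A-stable — one leans on the classical fact that BDF2 is A-stable, so the root locus curve of $\delta_\tau$ stays in the right half-plane, which gives the clean estimate; this is the point I would treat most carefully, citing the standard reference on the stability of BDF2 convolution quadrature rather than re-deriving it. All other steps are routine Taylor expansions and compactness arguments on a bounded domain.
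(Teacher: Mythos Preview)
The paper gives no proof of this lemma; it is quoted with a citation to \cite{JLZ:2017} and used as a black box throughout. Your sketch is essentially the standard argument from that reference: Taylor-expand $\beta(w)=\tfrac32-2e^{-w}+\tfrac12 e^{-2w}$ to get the consistency bounds, then appeal to A-stability of BDF2 plus compactness on the bounded set $\{w:|\arg w|\le\theta,\ |\Im w|\le\pi\}$ for the two-sided estimate and the sector mapping.

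Two corrections worth making before you finalize. First, the roots of $\tfrac32-2\zeta+\tfrac12\zeta^{2}=\tfrac12(\zeta-1)(\zeta-3)$ are $\zeta=1$ and $\zeta=3$, not $2\pm\sqrt2$. This actually makes your life easier: the only zero of $\beta(w)$ in the strip $|\Im w|\le\pi$ is the simple one at $w=0$ (since $e^{-w}=3$ forces $\Re w=-\ln 3<0$, which lies outside any sector $|\arg w|\le\theta<\pi$), so the lower bound $|\beta(w)|\ge c_1|w|$ follows directly from compactness and there is no near-root to worry about. Second, your argument for the sector inclusion $\delta_\tau(e^{-z\tau})\in\Sigma_{\pi/2+\varepsilon}$ via $|\arg(\beta(w)/w)|\le C|w|$ only handles the small-$|w|$ regime; on the full truncated contour $|w|$ reaches $\pi/\sin\theta$, where $C|w|$ is not small. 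There the containment is a genuine consequence of A-stability of BDF2 (equivalently, $\Re\,\delta(\xi)\ge 0$ for $|\xi|\le 1$), not a perturbation of $\arg w$, and you should invoke that directly rather than the local expansion.
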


\begin{lemma}\label{Lemma nn3.3} 
Let $\delta_{\tau}(\xi)$ be given in \eqref{2.2}  and $\gamma_{l}(\xi)=\sum^{\infty}_{n=1}n^{l} \xi^{n}=\left( \xi\frac{d}{d\xi} \right)^{l} \frac{1}{1-\xi} $ with $l=0, 1, 2$. Then there exist a  positive constants $c$ such that
\begin{equation*}
\left| \frac{\gamma_{l}(e^{-z\tau})}{l!}  \tau^{l+1} - z^{-l-1} \right| \leq c \tau^{l+1} , \quad \forall z\in \Gamma^{\tau}_{\theta,\kappa},
\end{equation*}
where $\theta\in (\pi/2,\pi)$ is  sufficiently close to $\pi/2$.
\end{lemma}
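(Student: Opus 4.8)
The plan is to eliminate $\tau$ by the substitution $w=z\tau$ and then reduce the estimate to the boundedness, on a fixed compact set, of the first three derivatives of a single function that is holomorphic across the origin.

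First I would check that, for $z\in\Gamma^\tau_{\theta,\kappa}$, the rescaled variable $w=z\tau$ stays in a compact set independent of $\tau$. Indeed $|\arg w|=|\arg z|\le\theta$; on the circular part of $\Gamma_{\theta,\kappa}$ one has $|w|=\kappa\tau$, while on the rays $z=re^{\pm i\theta}$ the truncation $|\Im z|\le\pi/\tau$ forces $r\le\pi/(\tau\sin\theta)$, hence $|w|\le\pi/\sin\theta$. Thus $w\in K:=\{w\in\mathbb C:|w|\le\pi/\sin\theta,\ |\arg w|\le\theta\}$, which does not depend on $\tau$. Writing $z=w/\tau$ gives $z^{-l-1}=\tau^{l+1}w^{-l-1}$, so
$$\frac{\gamma_l(e^{-z\tau})}{l!}\tau^{l+1}-z^{-l-1}=\tau^{l+1}\left(\frac{\gamma_l(e^{-w})}{l!}-w^{-l-1}\right),$$
and it remains to bound $\bigl|\gamma_l(e^{-w})/l!-w^{-l-1}\bigr|$ uniformly over $w\in K$.

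Next I would rewrite the bracketed quantity as a derivative. Since $\frac{d}{dw}F(e^{-w})=-e^{-w}F'(e^{-w})$, the change of variable $\xi=e^{-w}$ turns $\xi\frac{d}{d\xi}$ into $-\frac{d}{dw}$; applying this $l$ times to $F(\xi)=\frac1{1-\xi}$ and using $\gamma_l(\xi)=(\xi\frac{d}{d\xi})^l\frac1{1-\xi}$ gives $\gamma_l(e^{-w})=\bigl(-\frac{d}{dw}\bigr)^l\frac1{1-e^{-w}}$, whereas $\bigl(-\frac{d}{dw}\bigr)^l\frac1w=\frac{l!}{w^{l+1}}$. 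Subtracting,
$$\frac{\gamma_l(e^{-w})}{l!}-w^{-l-1}=\frac1{l!}\Bigl(-\frac{d}{dw}\Bigr)^l H(w),\qquad H(w):=\frac1{1-e^{-w}}-\frac1w,\quad l=0,1,2.$$

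Finally I would invoke analyticity: the only singularities of $w\mapsto\frac1{1-e^{-w}}$ are the simple poles at $2\pi i k$, $k\in\mathbb Z$, and the one at $w=0$ is exactly cancelled by $-1/w$, so $H$ extends holomorphically across $0$ (with $H(0)=\tfrac12$) and is holomorphic on the disc $\{|w|<2\pi\}$. If $\theta$ is close enough to $\pi/2$ then $\pi/\sin\theta<2\pi$, hence $K\subset\{|w|<2\pi\}$, so $H,H',H''$ are continuous, and therefore bounded, on the compact set $K$; taking $c=\max_{0\le l\le 2}\frac1{l!}\sup_{w\in K}|H^{(l)}(w)|$ and reinserting the factor $\tau^{l+1}$ finishes the argument. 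I expect the only subtle point to be the very first step, namely confirming that the truncation $|\Im z|\le\pi/\tau$ confines $w=z\tau$ to a disc of radius strictly below $2\pi$ so as to avoid the poles $\pm2\pi i,\pm4\pi i,\dots$ of $H$ — which is precisely the reason the hypothesis requires $\theta$ sufficiently close to $\pi/2$.
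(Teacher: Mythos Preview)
Your argument is correct and takes a genuinely different route from the paper. The paper treats the three values of $l$ separately: for $l=1,2$ it simply cites \cite{SC:2020}, and for $l=0$ it writes $\frac{\tau}{1-e^{-z\tau}}-z^{-1}$ as a quotient, bounds the denominator below via Lemma~\ref{Lemma 2.3} (i.e.\ $|1-e^{-z\tau}|\ge c_1|z|\tau$), and bounds the numerator by a direct Taylor expansion $z-(1-e^{-z\tau})/\tau=\tau z^2\sum_{j\ge0}(-z\tau)^j/(j+2)!$. Your approach instead rescales $w=z\tau$, recognizes $\frac{\gamma_l(e^{-w})}{l!}-w^{-l-1}=\frac{1}{l!}\bigl(-\frac{d}{dw}\bigr)^{\!l}H(w)$ with $H(w)=\frac{1}{1-e^{-w}}-\frac{1}{w}$, and then invokes boundedness of $H,H',H''$ on the compact sector $K\subset\{|w|<2\pi\}$. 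This is more conceptual and handles all $l$ uniformly with no external citation; it also makes transparent \emph{why} the hypothesis ``$\theta$ close to $\pi/2$'' is needed (so that $\pi/\sin\theta<2\pi$ and $K$ avoids the poles $\pm2\pi i$). The paper's hands-on computation, by contrast, yields explicit intermediate inequalities (such as \eqref{3.0001}) that are reused later in the analysis, so its proof is more self-contained within the paper's toolkit. One minor point worth stating explicitly in your write-up: on the circular arc $|w|=\kappa\tau$, which must also lie in $K$; this holds under the standing assumption (implicit throughout the paper and in Lemma~\ref{Lemma 2.3}) that $\kappa\tau$ is bounded, e.g.\ $\kappa\tau\le\pi/\sin\theta$.
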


\begin{proof}
The arguments can be performed in \cite{SC:2020} for $l=1, 2$. For $l=0$,
using
\begin{equation*}
\frac{1}{1-e^{-z\tau}}  \tau - z^{-1} =  \frac{z-\left(1-e^{-z\tau}\right)\tau^{-1}}{\left(1-e^{-z\tau}\right)\tau^{-1} z },
\end{equation*}
and  Lemma \ref{Lemma 2.3}, it yields  $|1-e^{-z\tau}|\geq c_1 |z|\tau$ and
\begin{equation}\label{3.0001}
\begin{split}
\left| (1-e^{-z\tau})\tau^{-1}z \right| \geq  c|z|^{2}~~\forall z\in\Gamma^{\tau}_{\theta,\kappa}.
\end{split}
\end{equation}
Since
\begin{equation*}
\begin{split}
\left| z-\left(1-e^{-z\tau}\right)\tau^{-1} \right|
& =  \left| z- \left( 1- \sum^{\infty}_{j=0} \frac{(-z\tau)^{j}}{j!}  \right) \tau^{-1} \right|
  =  \left| z- \left( - \sum^{\infty}_{j=1} \frac{(-z\tau)^{j}}{j!}  \right) \tau^{-1} \right| \\
& =  \left| z-  z \sum^{\infty}_{j=0} \frac{(-z\tau)^{j}}{(j+1)!}    \right| 
  =  \left| \tau z^{2} \sum^{\infty}_{j=0} \frac{(-z\tau)^{j}}{(j+2)!}   \right|  \leq c \tau   \left| z \right|^{2}.
\end{split}
\end{equation*}
Thus  we have
\begin{equation*}
\left| \frac{1}{1-e^{-z\tau}}  \tau - z^{-1} \right| \leq c \tau.
\end{equation*}
The proof is completed.
\end{proof}

\begin{lemma}\label{Lemma 3.4}
Let $\delta_{\tau}(\xi)$ be given in \eqref{2.2} and $\gamma_{l}(\xi)=\sum^{\infty}_{n=1}n^{l} \xi^{n}=\left( \xi\frac{d}{d\xi} \right)^{l} \frac{1}{1-\xi} $ with $l=0, 1, 2$.
Then there exist a  positive constants $c$ such that
\begin{equation}\label{nn3.2}
\left|\delta_{\tau}(e^{-z\tau}) \frac{\gamma_{l}(e^{-z\tau})}{l!} \tau^{l+1} - z^{-l} \right| \leq c \tau^{l+1} \left| z \right| + c \tau^{2}|z|^{2-l}, \quad \forall z\in \Gamma^{\tau}_{\theta,\kappa}, 
\end{equation}
where $\theta\in (\pi/2,\pi)$ is  sufficiently close to $\pi/2$.
\end{lemma}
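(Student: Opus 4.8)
The plan is to prove \eqref{nn3.2} by writing the left-hand side as a telescoping combination of two quantities whose sizes are already controlled, namely the multiplicative error in $\delta_\tau(e^{-z\tau})$ versus $z$ and the error in $\frac{\gamma_l(e^{-z\tau})}{l!}\tau^{l+1}$ versus $z^{-l-1}$ from Lemma \ref{Lemma nn3.3}. Concretely, set $D=\delta_\tau(e^{-z\tau})$ and $P=\frac{\gamma_l(e^{-z\tau})}{l!}\tau^{l+1}$. Then
\begin{equation*}
DP - z^{-l} = D\bigl(P - z^{-l-1}\bigr) + z^{-l-1}\bigl(D - z\bigr).
\end{equation*}
First I would estimate the first term: by the upper bound $|D|\le c_2|z|$ from Lemma \ref{Lemma 2.3} and the bound $|P - z^{-l-1}|\le c\tau^{l+1}$ from Lemma \ref{Lemma nn3.3}, this contributes at most $c\,\tau^{l+1}|z|$, which is exactly the first term on the right of \eqref{nn3.2}. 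Next I would estimate the second term: by $|\delta_\tau(e^{-z\tau})-z|\le c\tau^2|z|^3$ (again Lemma \ref{Lemma 2.3}) and $|z^{-l-1}|=|z|^{-l-1}$, this contributes at most $c\,\tau^2|z|^{2-l}$, the second term on the right of \eqref{nn3.2}. Adding the two bounds yields the claim, valid on $\Gamma^\tau_{\theta,\kappa}$ with $\theta$ close to $\pi/2$ so that all the cited lemmas apply.

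The main thing to be careful about is that the two input lemmas (Lemma \ref{Lemma 2.3} and Lemma \ref{Lemma nn3.3}) are both stated to hold only for $\theta\in(\pi/2,\pi)$ sufficiently close to $\pi/2$ (and, in Lemma \ref{Lemma 2.3}, with an auxiliary $\varepsilon$); so I would fix a single such $\theta$ at the outset and note that it works simultaneously for both, so that the contour $\Gamma^\tau_{\theta,\kappa}$ appearing in \eqref{nn3.2} is the same one on which the bounds are available. I also want the constant $c$ to be uniform in $z$ on the truncated contour; this is automatic because each factor's bound is uniform there, and $l\in\{0,1,2\}$ is a finite set so the constant can be taken as the maximum over the three cases.

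I do not expect a genuine obstacle here: the statement is a routine "product rule for error bounds" once Lemma \ref{Lemma nn3.3} is in hand, and indeed the only mildly delicate point is the bookkeeping of the powers of $|z|$ — making sure that $|D|\cdot\tau^{l+1}\sim\tau^{l+1}|z|$ and $|z|^{-l-1}\cdot\tau^2|z|^3=\tau^2|z|^{2-l}$ come out matching the two terms on the right-hand side exactly, with no loss. If one wanted to avoid invoking Lemma \ref{Lemma nn3.3} as a black box, one could alternatively expand $\gamma_l(e^{-z\tau})$ directly via the power series $\gamma_l(\xi)=\bigl(\xi\frac{d}{d\xi}\bigr)^l\frac{1}{1-\xi}$ evaluated at $\xi=e^{-z\tau}$ and compare term-by-term with $z^{-l-1}$, much as is done for the $l=0$ case in the proof of Lemma \ref{Lemma nn3.3}; but reusing Lemma \ref{Lemma nn3.3} together with the splitting above is cleaner and keeps the argument short.
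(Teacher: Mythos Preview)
Your proposal is correct and is essentially identical to the paper's proof: the paper also splits $\delta_\tau(e^{-z\tau})\frac{\gamma_l(e^{-z\tau})}{l!}\tau^{l+1}-z^{-l}$ into $J_1=\delta_\tau(e^{-z\tau})\bigl(\frac{\gamma_l(e^{-z\tau})}{l!}\tau^{l+1}-z^{-l-1}\bigr)$ and $J_2=(\delta_\tau(e^{-z\tau})-z)z^{-l-1}$, and bounds them via Lemmas~\ref{Lemma 2.3} and~\ref{Lemma nn3.3} exactly as you describe.
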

\begin{proof}
Let
\begin{equation*}
\delta_{\tau}(e^{-z\tau}) \frac{\gamma_{l}(e^{-z\tau})}{l!}  \tau^{l+1} - z^{-l} = J_{1} + J_{2}
\end{equation*}
with
\begin{equation*}
J_{1}= \delta_{\tau}(e^{-z\tau}) \frac{\gamma_{l}(e^{-z\tau})}{l!}  \tau^{l+1} - \delta_{\tau}(e^{-z\tau}) z^{-l-1} \quad {\rm and} \quad
J_{2}= \delta_{\tau}(e^{-z\tau}) z^{-l-1} - z^{-l}.
\end{equation*}
According to Lemma \ref{Lemma 2.3} and \ref{Lemma nn3.3},  we have
\begin{equation*}
\left|J_{1} \right| =  \left| \delta_{\tau}(e^{-z\tau}) \left(\frac{\gamma_{l}(e^{-z\tau})}{l!}  \tau^{l+1} -  z^{-l-1} \right) \right| \leq c_{2} \left| z \right|  c \tau^{l+1} \leq c \tau^{l+1}  \left| z \right|
\end{equation*}
and
\begin{equation*}
\left|J_{2} \right| = \left| \left( \delta_{\tau}(e^{-z\tau})-z \right) z^{-l-1}  \right| \leq  c \tau^{2}|z|^{2-l}.
\end{equation*}
By the triangle inequality,  the desired result is obtained.
\end{proof}

\begin{lemma}\label{Lemma 3.11}
Let $\delta^{\alpha}_{\tau}$  be given by \eqref{2.2}  and $\gamma_{l}(\xi)=\sum^{\infty}_{n=1}n^{l} \xi^{n}=\left( \xi\frac{d}{d\xi} \right)^{l} \frac{1}{1-\xi} $ with $l=0,1, 2$. Then there exist a  positive constants $c$ such that
\begin{equation*}
\begin{split}
&\left\| \left( \delta^{\alpha}_{\tau}(e^{-z\tau}) -A\right)^{-1} \delta_{\tau}(e^{-z\tau}) \frac{\gamma_{l}(e^{-z\tau})}{l!} \tau^{l+1} - (z^{\alpha}-A)^{-1} z^{-l} \right\| \\
 &\quad \leq c \tau^{l+1}  \left| z \right|^{1-\alpha} + c \tau^{2}|z|^{2-l-\alpha}.
\end{split}
\end{equation*}
\end{lemma}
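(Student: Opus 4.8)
The plan is to compare the two operator-valued functions by inserting the intermediate quantity $(\delta^{\alpha}_{\tau}(e^{-z\tau})-A)^{-1}z^{-l}$ and splitting the difference into two pieces,
\begin{equation*}
(\delta^{\alpha}_{\tau}(e^{-z\tau})-A)^{-1}\delta_{\tau}(e^{-z\tau})\frac{\gamma_{l}(e^{-z\tau})}{l!}\tau^{l+1} - (z^{\alpha}-A)^{-1}z^{-l} = I_{1}+I_{2},
\end{equation*}
where
\begin{equation*}
I_{1} = (\delta^{\alpha}_{\tau}(e^{-z\tau})-A)^{-1}\left(\delta_{\tau}(e^{-z\tau})\frac{\gamma_{l}(e^{-z\tau})}{l!}\tau^{l+1} - z^{-l}\right),
\end{equation*}
\begin{equation*}
I_{2} = \left((\delta^{\alpha}_{\tau}(e^{-z\tau})-A)^{-1} - (z^{\alpha}-A)^{-1}\right)z^{-l}.
\end{equation*}
The point is that $I_{1}$ isolates the symbol error controlled by Lemma~\ref{Lemma 3.4}, while $I_{2}$ isolates the error between the discrete and continuous resolvents.

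Before estimating either term I would first record the uniform bound $\|(\delta^{\alpha}_{\tau}(e^{-z\tau})-A)^{-1}\|\le c|z|^{-\alpha}$ on $\Gamma^{\tau}_{\theta,\kappa}$. By Lemma~\ref{Lemma 2.3}, fixing $\theta$ close enough to $\pi/2$ forces $\delta_{\tau}(e^{-z\tau})\in\Sigma_{\pi/2+\varepsilon}$, hence $\delta^{\alpha}_{\tau}(e^{-z\tau})=(\delta_{\tau}(e^{-z\tau}))^{\alpha}$ lies in a sector $\Sigma_{\theta'}$ with $\theta'<\pi$ on which \eqref{fractional resolvent estimate} applies; this gives $\|(\delta^{\alpha}_{\tau}(e^{-z\tau})-A)^{-1}\|\le c|\delta_{\tau}(e^{-z\tau})|^{-\alpha}\le c|z|^{-\alpha}$, the last inequality using $|\delta_{\tau}(e^{-z\tau})|\ge c_{1}|z|$ from Lemma~\ref{Lemma 2.3}.

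Then $I_{1}$ is controlled by combining this bound with Lemma~\ref{Lemma 3.4}:
\begin{equation*}
\|I_{1}\| \le c|z|^{-\alpha}\left(c\tau^{l+1}|z| + c\tau^{2}|z|^{2-l}\right) = c\tau^{l+1}|z|^{1-\alpha} + c\tau^{2}|z|^{2-l-\alpha}.
\end{equation*}
For $I_{2}$ I would use the resolvent identity
\begin{equation*}
(\delta^{\alpha}_{\tau}(e^{-z\tau})-A)^{-1} - (z^{\alpha}-A)^{-1} = (\delta^{\alpha}_{\tau}(e^{-z\tau})-A)^{-1}\left(z^{\alpha}-\delta^{\alpha}_{\tau}(e^{-z\tau})\right)(z^{\alpha}-A)^{-1},
\end{equation*}
bound the first resolvent by $c|z|^{-\alpha}$ (just shown), the second by $c|z|^{-\alpha}$ via \eqref{fractional resolvent estimate}, and the middle factor by $|z^{\alpha}-\delta^{\alpha}_{\tau}(e^{-z\tau})|\le c\tau^{2}|z|^{2+\alpha}$ from Lemma~\ref{Lemma 2.3}, so that $\|I_{2}\|\le c\tau^{2}|z|^{2-l-\alpha}$. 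The triangle inequality then yields the claim.

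I expect the only genuinely delicate point to be the bookkeeping of sectors: one must fix $\theta\in(\pi/2,\pi)$ close enough to $\pi/2$ that Lemma~\ref{Lemma 2.3} confines $\delta_{\tau}(e^{-z\tau})$ to a sector on which \eqref{fractional resolvent estimate} is valid, uniformly in $z\in\Gamma^{\tau}_{\theta,\kappa}$ and in $\tau$. Once that is arranged, the estimate is a mechanical combination of Lemmas~\ref{Lemma 2.3} and \ref{Lemma 3.4} with the resolvent identity; no power of $|z|$ or of $\tau$ requires any special argument.
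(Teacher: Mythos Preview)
Your proof is correct and follows essentially the same approach as the paper: the same splitting into $I_{1}$ and $I_{2}$ (the paper's $I$ and $II$), the same discrete resolvent bound $\|(\delta^{\alpha}_{\tau}(e^{-z\tau})-A)^{-1}\|\le c|z|^{-\alpha}$ derived from Lemma~\ref{Lemma 2.3} and \eqref{fractional resolvent estimate}, the use of Lemma~\ref{Lemma 3.4} for $I_{1}$, and the resolvent identity with Lemma~\ref{Lemma 2.3} for $I_{2}$. Your discussion of the sector bookkeeping is a bit more explicit than the paper's, but the argument is otherwise identical.
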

\begin{proof}
Let
\begin{equation*}
\left( \delta^{\alpha}_{\tau}(e^{-z\tau}) -A\right)^{-1} \delta_{\tau}(e^{-z\tau})  \frac{\gamma_{l}(e^{-z\tau})}{l!}  \tau^{l+1}   - (z^{\alpha}-A)^{-1} z^{-l} = I + II
\end{equation*}
with
\begin{equation*}
\begin{split}
I  & = \left( \delta^{\alpha}_{\tau}(e^{-z\tau}) -A\right)^{-1} \left[ \delta_{\tau}(e^{-z\tau})  \frac{\gamma_{l}(e^{-z\tau})}{l!}  \tau^{l+1}   -  z^{-l} \right],  \\
II & = \left[ \left( \delta^{\alpha}_{\tau}(e^{-z\tau}) -A\right)^{-1} - (z^{\alpha}-A)^{-1} \right]z^{-l}.
\end{split}
\end{equation*}
The resolvent estimate \eqref{fractional resolvent estimate} and Lemma  \ref{Lemma 2.3} imply directly
\begin{equation}\label{discrete fractional resolvent estimate}
\| \left(\delta^{\alpha}_{\tau}(e^{-z\tau}) -A\right)^{-1} \| \leq c |z|^{-\alpha}.
\end{equation}
From \eqref{discrete fractional resolvent estimate} and Lemma \ref{Lemma 3.4}, we obtain
\begin{equation*}
\|I\|  \leq c \tau^{l+1}  \left| z \right|^{1-\alpha} + c \tau^{2}|z|^{2-l-\alpha}. 
\end{equation*}
Using Lemma \ref{Lemma 2.3}, \eqref{discrete fractional resolvent estimate} and the identity
\begin{equation}\label{identity1}
\begin{split}
  &\left( \delta^{\alpha}_{\tau}(e^{-z\tau}) -A\right)^{-1} - (z^{\alpha}-A)^{-1}\\
 &\quad =\left( z^{\alpha} - \delta^{\alpha}_{\tau}(e^{-z\tau}) \right) \left( \delta^{\alpha}_{\tau}(e^{-z\tau}) -A\right)^{-1} (z^{\alpha}-A)^{-1},
\end{split}
\end{equation}
we estimate $II$ as following
\begin{equation*}
\|II\|  \leq c \tau^{2} |z|^{2+\alpha}  c |z|^{-\alpha} c |z|^{-\alpha}  |z|^{-l} \leq c \tau^{2} |z|^{2-l-\alpha}.
\end{equation*}
By the triangle inequality,  the desired result is obtained.
\end{proof}

\begin{lemma}\label{addLemma 3.6}
Let $\delta^{\alpha}_{\tau}$  be given by \eqref{2.2}  and $\gamma_{1}(\xi)=\sum^{\infty}_{n=1}n \xi^{n}=\left( \xi\frac{d}{d\xi} \right) \frac{1}{1-\xi}= \frac{\xi}{(1-\xi)^2}$. Then there exist a  positive constants $c$ such that
\begin{equation*}\label{addnn3.4}
\begin{split}
\left\| \left( \delta^{\alpha}_{\tau}(e^{-z\tau}) -A\right)^{-1} \delta_{\tau}(e^{-z\tau}) \gamma_{1}(e^{-z\tau}) \tau^{2} A - (z^{\alpha}-A)^{-1} z^{-1}  A \right\| \leq c \tau^{2} \left| z \right|.
\end{split}
\end{equation*}
\end{lemma}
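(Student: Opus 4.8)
The plan is to follow the same template as the proof of Lemma~\ref{Lemma 3.11}, using Lemma~\ref{Lemma 3.4} (case $l=1$, where $\gamma_1/1!=\gamma_1$) for the scalar symbols and the resolvent identity~\eqref{identity1} for the difference of the two resolvents; the only genuinely new feature is the trailing factor $A$, which is unbounded and therefore cannot be estimated directly in norm. Writing $B=\delta^{\alpha}_{\tau}(e^{-z\tau})$ for brevity, I would split
\begin{equation*}
\left( B -A\right)^{-1} \delta_{\tau}(e^{-z\tau}) \gamma_{1}(e^{-z\tau}) \tau^{2} A - (z^{\alpha}-A)^{-1} z^{-1}  A = III + IV,
\end{equation*}
with
\begin{equation*}
III = \left( B -A\right)^{-1}\!\left[ \delta_{\tau}(e^{-z\tau}) \gamma_{1}(e^{-z\tau}) \tau^{2} - z^{-1} \right] A, \qquad
IV = \left[ \left( B -A\right)^{-1} - (z^{\alpha}-A)^{-1} \right] z^{-1} A .
\end{equation*}

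For the term $III$, the idea is to commute $A$ through the resolvent: since $(B-A)^{-1}A = B(B-A)^{-1} - I$, and $|B|=|\delta_{\tau}(e^{-z\tau})|^{\alpha}\le c|z|^{\alpha}$ by Lemma~\ref{Lemma 2.3} together with the discrete resolvent bound \eqref{discrete fractional resolvent estimate}, one gets $\|(B-A)^{-1}A\|\le c$ uniformly on $\Gamma^{\tau}_{\theta,\kappa}$. Combining this with the scalar estimate $\left|\delta_{\tau}(e^{-z\tau})\gamma_{1}(e^{-z\tau})\tau^{2}-z^{-1}\right|\le c\tau^{2}|z|$ from Lemma~\ref{Lemma 3.4} immediately gives $\|III\|\le c\tau^{2}|z|$.

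For the term $IV$, I would first rewrite it via~\eqref{identity1} as
\begin{equation*}
IV = \left( z^{\alpha} - B \right)\left( B -A\right)^{-1} (z^{\alpha}-A)^{-1} z^{-1} A ,
\end{equation*}
and then again pull $A$ through the continuous resolvent, $(z^{\alpha}-A)^{-1}A = z^{\alpha}(z^{\alpha}-A)^{-1} - I$, which is bounded by a constant because of~\eqref{fractional resolvent estimate}. Using $|z^{\alpha}-B|\le c\tau^{2}|z|^{2+\alpha}$ (Lemma~\ref{Lemma 2.3}) and $\|(B-A)^{-1}\|\le c|z|^{-\alpha}$ \eqref{discrete fractional resolvent estimate}, this yields
\begin{equation*}
\|IV\| \le c\tau^{2}|z|^{2+\alpha}\cdot c|z|^{-\alpha}\cdot |z|^{-1}\cdot c = c\tau^{2}|z| .
\end{equation*}
The triangle inequality then gives the claimed bound $c\tau^{2}|z|$. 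The only point requiring care — and the reason the proof is not a verbatim copy of Lemma~\ref{Lemma 3.11} — is the handling of the unbounded operator $A$: one must always keep $A$ adjacent to a resolvent and use the algebraic identities $(z^{\alpha}-A)^{-1}A=z^{\alpha}(z^{\alpha}-A)^{-1}-I$ and $(B-A)^{-1}A=B(B-A)^{-1}-I$ to convert $A$ into a uniformly bounded operator before estimating; everything else is a routine application of Lemmas~\ref{Lemma 2.3} and~\ref{Lemma 3.4} and the (discrete) resolvent bounds.
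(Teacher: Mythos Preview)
Your proof is correct and rests on the same key idea as the paper's: since $A$ is unbounded, one must keep it adjacent to a resolvent and use the algebraic identity $(M-A)^{-1}A=M(M-A)^{-1}-I$ (for $M=z^{\alpha}$ or $M=\delta_{\tau}^{\alpha}(e^{-z\tau})$) before estimating. The difference is purely organisational. The paper first applies this identity to \emph{both} terms, obtaining the four pieces
\[
J_1=(B-A)^{-1}B\bigl(\delta_{\tau}\gamma_{1}\tau^{2}-z^{-1}\bigr),\quad
J_2=(B-A)^{-1}(B-z^{\alpha})z^{-1},\quad
J_3=\bigl[(B-A)^{-1}-(z^{\alpha}-A)^{-1}\bigr]z^{\alpha-1},\quad
J_4=z^{-1}-\delta_{\tau}\gamma_{1}\tau^{2},
\]
and bounds each separately. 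Your two-term split satisfies $III=J_1+J_4$ and $IV=J_2+J_3$ exactly, so the arguments are algebraically equivalent; your version has the advantage of mirroring the structure of Lemma~\ref{Lemma 3.11} (making the role of the extra factor $A$ transparent), while the paper's four-term split avoids ever writing the composite operator $(B-A)^{-1}A$ and works only with manifestly bounded pieces.
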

\begin{proof}
Using identical
$(z^{\alpha}-A)^{-1} z^{-l}  A = - z^{-1}   + (z^{\alpha}-A)^{-1} z^{\alpha} z^{-1}$ and
\begin{equation*}
\begin{split}
 \left( \delta^{\alpha}_{\tau}(e^{-z\tau}) -A\right)^{-1} \delta_{\tau}(e^{-z\tau})  A
 =  - \delta_{\tau}(e^{-z\tau})  + \left( \delta^{\alpha}_{\tau}(e^{-z\tau}) -A\right)^{-1} \delta^{\alpha}_{\tau}(e^{-z\tau}) \delta_{\tau}(e^{-z\tau})  A,
\end{split}
\end{equation*}
we get
\begin{equation*}
\left( \delta^{\alpha}_{\tau}(e^{-z\tau}) -A\right)^{-1} \delta_{\tau}(e^{-z\tau}) \gamma_{1}(e^{-z\tau})  \tau^{2} A - (z^{\alpha}-A)^{-1} z^{-1}  A = J_{1} + J_{2} + J_{3} + J_{4}
\end{equation*}
with
\begin{equation*}
\begin{split}
J_{1} =& \left( \delta^{\alpha}_{\tau}(e^{-z\tau}) -A\right)^{-1} \delta^{\alpha}_{\tau}(e^{-z\tau}) \left( \delta_{\tau}(e^{-z\tau})  \gamma_{1}(e^{-z\tau})  \tau^{l+1} - z^{-1} \right), \\
J_{2} =& \left( \delta^{\alpha}_{\tau}(e^{-z\tau}) -A\right)^{-1} \left( \delta^{\alpha}_{\tau}(e^{-z\tau}) -  z^{\alpha} \right) z^{-1}, \\
J_{3} =& \left( \left( \delta^{\alpha}_{\tau}(e^{-z\tau}) -A\right)^{-1} - (z^{\alpha}-A)^{-1} \right) z^{\alpha - 1},
\quad J_{4} = z^{-1} - \delta_{\tau}(e^{-z\tau})  \gamma_{1}(e^{-z\tau})  \tau^{2}.
\end{split}
\end{equation*}


According to \eqref{discrete fractional resolvent estimate} and Lemmas \ref{Lemma 2.3}, \ref{Lemma 3.4} with $l=1$, we  estimate $J_{1}$, $J_{2}$ and $J_{4}$ as following
\begin{equation*}
\begin{split}
&\left\| J_{1}  \right\| \leq c |z|^{-\alpha} |z|^{\alpha}  \tau^{2} \left| z \right|   \leq  c \tau^{2} \left| z \right|, \\
&\left\| J_{2}  \right\| \leq c |z|^{-\alpha} \tau^{2}|z|^{2+\alpha} |z|^{-1}  \leq   c \tau^{2}|z|, \quad
\left\| J_{4}  \right\| \leq  c \tau^{2} \left| z \right|.
\end{split}
\end{equation*}
From  Lemma \ref{Lemma 2.3}, \eqref{discrete fractional resolvent estimate} and the identity \eqref{identity1},
we estimate $J_{3}$ as following
\begin{equation*}
\left\| J_{3}  \right\| \leq c \tau^{2}|z|^{2+\alpha}  |z|^{-\alpha} |z|^{-\alpha} |z|^{\alpha-1}  \leq   c \tau^{2}|z|.
\end{equation*}
By the triangle inequality,  the desired result is obtained.
\end{proof}

\subsection{Error analysis for general source function $g(x,t)$}
From   $G(t)=J^1g(t)$, the Taylor expansion of source  function with the remainder term in integral form:
\begin{equation*}
\begin{split}
1 \ast g(t)=G(t)&=  G(0)+t G'(0)+  \frac{t^2}{2} G''(0) + \frac{t^2}{2}  \ast  G'''(t)\\
&= J^1g(0)+t g(0)+  \frac{t^2}{2} g'(0) + \frac{t^2}{2}  \ast  g''(t).
\end{split}
\end{equation*}
Then  we obtain   the following results with $g^{(-1)}(0)=J^1g(0)$.
\begin{lemma}\label{lemma3.9}
Let $V(t_{n})$ and $V^{n}$ be the solutions of \eqref{rrfee} and \eqref{2.3}, respectively.
Let $v=0$ and $G(t):=\frac{t^{l}}{l!} g^{(l-1)}(0)$ with $l=0,1, 2$.  Then
\begin{equation}\label{nn3.7}
\left\| V(t_{n}) - V^{n} \right\| \leq \left( c\tau^{l+1}t_{n}^{\alpha-2} + c\tau^{2}t_{n}^{\alpha+l-3} \right) \left\| g^{(l-1)}(0)\right\|.
\end{equation}
\end{lemma}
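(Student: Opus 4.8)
The plan is to represent both $V(t_n)$ and $V^n$ as contour integrals and compare their integrands pointwise, which is exactly the situation prepared by Lemma \ref{Lemma 3.11}. First I would specialize the two solution representations to $v=0$ and $G(t)=\frac{t^l}{l!}g^{(l-1)}(0)$. Its Laplace transform is $\widehat G(z)=z^{-l-1}g^{(l-1)}(0)$, so $z\widehat G(z)=z^{-l}g^{(l-1)}(0)$ and \eqref{LT} becomes
\[
V(t_n)=\frac{1}{2\pi i}\int_{\Gamma_{\theta,\kappa}}e^{zt_n}(z^{\alpha}-A)^{-1}z^{-l}\,dz\;g^{(l-1)}(0),
\]
while its generating power series is $\widetilde G(\xi)=\frac{\tau^l}{l!}\gamma_l(\xi)\,g^{(l-1)}(0)$ with $\gamma_l(\xi)=\big(\xi\frac{d}{d\xi}\big)^l\frac{1}{1-\xi}$, so Lemma \ref{Lemma2.1} (with $v=0$), after absorbing one factor of $\tau$, gives
\[
V^n=\frac{1}{2\pi i}\int_{\Gamma^{\tau}_{\theta,\kappa}}e^{zt_n}\big(\delta^{\alpha}_{\tau}(e^{-z\tau})-A\big)^{-1}\delta_{\tau}(e^{-z\tau})\frac{\gamma_l(e^{-z\tau})}{l!}\tau^{l+1}\,dz\;g^{(l-1)}(0).
\]
For $l=0$ this is read with the conventions $\gamma_0(\xi)=\frac{1}{1-\xi}$ and $z^{-l}=1$ (so that $\partial_t$ of the constant $G\equiv g^{(-1)}(0)$ contributes $z\widehat G(z)=g^{(-1)}(0)$ in the Laplace domain), under which the argument below is uniform in $l\in\{0,1,2\}$; the bracketed operator in the formula for $V^n$ is then precisely the one estimated in Lemma \ref{Lemma 3.11} against $(z^{\alpha}-A)^{-1}z^{-l}$.

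Next I would split the contour as $\Gamma_{\theta,\kappa}=\Gamma^{\tau}_{\theta,\kappa}\cup\big(\Gamma_{\theta,\kappa}\setminus\Gamma^{\tau}_{\theta,\kappa}\big)$ and write $V(t_n)-V^n=\mathrm{I}+\mathrm{II}$, where $\mathrm{I}$ is the integral over $\Gamma^{\tau}_{\theta,\kappa}$ of the difference of the two integrands and $\mathrm{II}$ is the integral of the continuous integrand over the high-frequency tail $\{re^{\pm i\theta}:r>\pi/(\tau\sin\theta)\}$. I would fix $\kappa=1/t_n$ (legitimate by Cauchy's theorem and compatible with $n\ge1$; the case $n=0$ is trivial since $V(t_0)=V^0=0$) and take $\theta$ close to $\pi/2$ as in Lemmas \ref{Lemma 3.4} and \ref{Lemma 3.11}. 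For $\mathrm{I}$, Lemma \ref{Lemma 3.11} bounds the bracketed operator by $c\tau^{l+1}|z|^{1-\alpha}+c\tau^{2}|z|^{2-l-\alpha}$; combining this with $|e^{zt_n}|\le e^{-c r t_n}$ on the rays ($c=|\cos\theta|>0$), the elementary estimate $\int_{1/t_n}^{\infty}e^{-crt_n}r^{\beta}\,dr\le c\,t_n^{-\beta-1}$, and the contribution of the arc $|z|=1/t_n$ (of length $\sim t_n^{-1}$, on which the factors are $O(t_n^{-(1-\alpha)})$ and $O(t_n^{-(2-l-\alpha)})$), one obtains $\|\mathrm{I}\|\le\big(c\tau^{l+1}t_n^{\alpha-2}+c\tau^{2}t_n^{\alpha+l-3}\big)\|g^{(l-1)}(0)\|$. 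For $\mathrm{II}$, the resolvent estimate \eqref{fractional resolvent estimate} gives $\|(z^{\alpha}-A)^{-1}z^{-l}\|\le c|z|^{-\alpha-l}$, and on the tail $r=|z|\ge\pi/(\tau\sin\theta)$ one has $r^{-l-1}\le c\tau^{l+1}$, hence $|z|^{-\alpha-l}\le c\tau^{l+1}|z|^{1-\alpha}$; the same exponential decay and the same elementary integral yield $\|\mathrm{II}\|\le c\tau^{l+1}t_n^{\alpha-2}\|g^{(l-1)}(0)\|$, which is absorbed into the first term. Adding $\mathrm{I}$ and $\mathrm{II}$ gives \eqref{nn3.7}.

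I expect the only genuinely delicate points to be bookkeeping ones: verifying that the choice $\kappa=1/t_n$ makes the arc contributions produce exactly the powers $t_n^{\alpha-2}$ and $t_n^{\alpha+l-3}$ rather than merely $\tau$-order constants (here one may also use $t_n\le T$, i.e. $1\le T^{2-\alpha}t_n^{\alpha-2}$), and the trick $r^{-l-1}\le c\tau^{l+1}$ on the high-frequency tail that converts the negative power $|z|^{-\alpha-l}$ into the advertised order $\tau^{l+1}$. Everything else is routine contour estimation of exactly the type already used in the proofs of Lemmas \ref{Lemma nn3.3}--\ref{Lemma 3.11}.
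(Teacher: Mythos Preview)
Your proposal is correct and follows essentially the same approach as the paper: the same contour-integral representations for $V(t_n)$ and $V^n$, the same split into a main piece on $\Gamma^{\tau}_{\theta,\kappa}$ bounded via Lemma \ref{Lemma 3.11} and a high-frequency tail bounded via the resolvent estimate \eqref{fractional resolvent estimate}, with the choice $\kappa=1/t_n$. The only cosmetic difference is in the tail estimate: the paper extracts $\tau^{2}$ (via $1\le c\tau^{2}r^{2}$ on $r\ge \pi/(\tau\sin\theta)$) to obtain the contribution $c\tau^{2}t_n^{\alpha+l-3}$, while you extract $\tau^{l+1}$ (via $r^{-l-1}\le c\tau^{l+1}$) to obtain $c\tau^{l+1}t_n^{\alpha-2}$; both terms already appear in \eqref{nn3.7}, so either variant suffices.
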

\begin{proof}
Using    \eqref{LT} and \eqref{DLT}, there exist
\begin{equation*}
V(t_{n})=\frac{1}{2\pi i}\int_{\Gamma_{\theta,\kappa}}{e^{zt_{n}}(z^{\alpha}-A)^{-1} \frac{1}{z^{l}} g^{(l-1)}(0)}dz,
\end{equation*}
and
\begin{equation*}
V^{n}=\frac{1}{2\pi i}\int_{\Gamma^{\tau}_{\theta,\kappa}} e^{zt_{n}} \left( \delta^{\alpha}_{\tau}(e^{-z\tau}) -A\right)^{-1} \delta_{\tau}(e^{-z\tau}) \frac{\gamma_{l}(e^{-z\tau})}{l!} \tau^{l+1} g^{(l-1)}(0) dz,
\end{equation*}
where  $\theta\in (\pi/2,\pi)$ is  sufficiently close to $\pi/2$,
and $\gamma_{l}(\xi)=\sum^{\infty}_{n=1}n^{l} \xi^{n}$.
Let
\begin{equation*}
V(t_{n})-V^{n}=J_1 + J_2
\end{equation*}
with
\begin{equation*}
\begin{split}
J_1
\!=\!\frac{1}{2\pi i}\int_{\Gamma^{\tau}_{\theta,\kappa}}\!\!\!e^{zt_{n}}\!\left[\!\frac{\left(z^{\alpha} - A\right)^{-1} }{z^{l}}
\!- \!\left( \delta^{\alpha}_{\tau}(e^{-z\tau}) \!-\!A\right)^{-1} \delta_{\tau}(e^{-z\tau}) \frac{\gamma_{l}(e^{-z\tau})}{l!} \tau^{l+1} \right] \! g^{(l-1)}(0) dz,
\end{split}
\end{equation*}
and
\begin{equation*}
\begin{split}
J_2
=\frac{1}{2\pi i}\int_{\Gamma_{\theta,\kappa}\setminus\Gamma^{\tau}_{\theta,\kappa}}{e^{zt_{n}}\left(z^{\alpha} - A\right)^{-1} \frac{1}{z^{l}} g^{(l-1)}(0)}dz.
\end{split}
\end{equation*}
According to the triangle inequality, \eqref{fractional resolvent estimate} and Lemma \ref{Lemma 3.11}, one has
\begin{equation*}
\begin{split}
  \| J_1 \|
  &  \leq c  \int^{\frac{\pi}{\tau\sin\theta}}_{\kappa} e^{rt_{n}\cos\theta} \left(\tau^{l+1}  r^{1-\alpha} +  \tau^{2} r^{2-l-\alpha} \right) dr \left\|g^{(l-1)}(0)\right\| \\
  & \quad + c \int^{\theta}_{-\theta}e^{\kappa t_{n} \cos\psi} \left(\tau^{l+1}  \kappa^{2-\alpha} +  \tau^{2} \kappa^{3-l-\alpha} \right) d\psi  \left\|g^{(l-1)}(0)\right\| \\
  &  \leq  \left( c\tau^{l+1}t_{n}^{\alpha-2} + c\tau^{2}t_{n}^{\alpha+l-3} \right) \left\| g^{(l-1)}(0)\right\|,
\end{split}
\end{equation*}
for the last inequality,   we use
\begin{equation}\label{ad3.3.09}
\begin{split}
& \int^{\frac{\pi}{\tau\sin\theta}}_{\kappa} e^{rt_{n}\cos\theta} r^{2-l-\alpha}dr = t_n^{\alpha+l-3} \int^{\frac{t_n\pi}{\tau\sin\theta}}_{t_n\kappa} e^{s\cos\theta} s^{2-l- \alpha}ds  \leq c t_n^{\alpha+l-3},\\
& \int^{\theta}_{-\theta}e^{\kappa t_{n} \cos\psi} \kappa^{3-l- \alpha} d\psi= t_n^{\alpha+l-3} \int^{\theta}_{-\theta}e^{\kappa t_{n} \cos\psi} \left(\kappa t_{n}\right)^{3-l- \alpha} d\psi \leq c t_n^{\alpha+l-3}.
\end{split}
\end{equation}
From   \eqref{fractional resolvent estimate}, it yields
\begin{equation*}
\begin{split}
\|J_2 \|
&\leq c \left\| g^{(l-1)}(0) \right\| \int^{\infty}_{\frac{\pi}{\tau\sin\theta}} e^{rt_{n}\cos\theta}r^{-l-\alpha}dr\\
&\leq c\tau^{2} \left\| g^{(l-1)}(0) \right\|  \int^{\infty}_{\frac{\pi}{\tau\sin\theta}} e^{rt_{n}\cos\theta}r^{2-l-\alpha}dr
\leq  c\tau^{2}t_{n}^{\alpha+l-3} \left\| g^{(l-1)}(0) \right\|.
\end{split}
\end{equation*}
Here we using $1\leq (\frac{\sin \theta}{\pi})^{2} \tau^{2} r^{2}$ with $r\geq \frac{\pi}{\tau \sin \theta}$.
The proof is completed.
\end{proof}

\begin{lemma}\label{lemma3.10}
Let $V(t_{n})$ and $V^{n}$ be the solutions of \eqref{rrfee} and \eqref{2.3}, respectively.
Let $v=0$,  $G(t):=\frac{t^{2}}{2} \ast  g''(t)$ and  $\int_{0}^{t}  (t-s)^{\alpha-1} \| g''(s) \| ds <\infty$. Then 
\begin{equation*}
\left\|V(t_{n})-V^{n}\right\|\leq c\tau^{2} \int_{0}^{t_{n}} (t_n-s)^{\alpha-1} \left\| g''(s) \right\|ds.
\end{equation*}
\end{lemma}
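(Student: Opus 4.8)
The plan is to mimic the structure of the proof of Lemma~\ref{lemma3.9}, but now handle the remainder term $G(t)=\frac{t^2}{2}\ast g''(t)$ whose Laplace transform factorizes as $\widehat G(z)=z^{-2}\widehat{g''}(z)$. First I would write down the integral representations coming from \eqref{LT} and \eqref{DLT} with $v=0$: since $\widetilde G(e^{-z\tau})$ is the generating series of the sequence $G^n$, and $G$ is a convolution, I would use the standard convolution-quadrature fact that $\widetilde G(e^{-z\tau})$ approximates $\widehat G(z)=z^{-2}\widehat{g''}(z)$; more precisely I want to compare $\delta_\tau(e^{-z\tau})\,\widetilde G(e^{-z\tau})$ with $z\,\widehat G(z)=z^{-1}\widehat{g''}(z)$. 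The key algebraic identity is $\delta_\tau(e^{-z\tau})\widetilde G(e^{-z\tau})=\delta_\tau(e^{-z\tau})\cdot\big(\tfrac{\tau}{1-e^{-z\tau}}\big)^2\cdot\tau^{-1}\widetilde{\,\widehat{g''}\,}$-type expression — i.e. the discrete double integration $\gamma_1$-weights times the generating series of $g''^n$. So the natural splitting is
\begin{equation*}
V(t_n)-V^n=J_1+J_2,
\end{equation*}
where $J_1$ is the integral over $\Gamma^\tau_{\theta,\kappa}$ of $e^{zt_n}$ times the difference of the two kernels acting on (the transform of) $g''$, and $J_2$ is the tail integral over $\Gamma_{\theta,\kappa}\setminus\Gamma^\tau_{\theta,\kappa}$ of the exact solution kernel.

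For $J_1$, the quantity to estimate is
\begin{equation*}
\Big\|\big(\delta^\alpha_\tau(e^{-z\tau})-A\big)^{-1}\delta_\tau(e^{-z\tau})\tfrac{\gamma_1(e^{-z\tau})}{1!}\tau^{2}-(z^\alpha-A)^{-1}z^{-1}\Big\|\cdot\big\|\widehat{g''}(z)\big\|,
\end{equation*}
and by Lemma~\ref{Lemma 3.11} with $l=1$ the operator-norm factor is bounded by $c\tau^{2}|z|^{1-\alpha}+c\tau^{2}|z|^{1-\alpha}=c\tau^{2}|z|^{1-\alpha}$. Then I would parametrize $\Gamma^\tau_{\theta,\kappa}$ as usual (the two rays $r e^{\pm i\theta}$ and the small arc $|z|=\kappa$), bound $\|\widehat{g''}(z)\|\le\int_0^{t_n}e^{-\Re(z)s}\|g''(s)\|\,ds$ on the rays where $\Re z=r\cos\theta<0$, and combine $e^{zt_n}$ with this to reproduce the convolution $\int_0^{t_n}(t_n-s)^{\alpha-1}\|g''(s)\|\,ds$ after the $r$-integration — this is exactly the mechanism already used in Lemma~\ref{lemma3.9} and in \cite{JLZ:2017}. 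The $|z|^{1-\alpha}\,e^{rt_n\cos\theta}$ integrates against $dr$ to give something like $t_n^{\alpha-2}$, and convolving against $\|g''\|$ with the extra $s$-integration produces the stated bound with the $\tau^2$ prefactor. For $J_2$, on $\Gamma_{\theta,\kappa}\setminus\Gamma^\tau_{\theta,\kappa}$ we have $|z|\ge\pi/(\tau\sin\theta)$, so $1\le(\tfrac{\sin\theta}{\pi})^2\tau^2|z|^2$; using \eqref{fractional resolvent estimate} and $\|\widehat{g''}(z)\|\le\int_0^{t_n}e^{r s\cos\theta}\|g''(s)\|\,ds$ (note $\cos\theta<0$) the bound again reduces to the same convolution times $\tau^2$.

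The main obstacle I anticipate is the careful bookkeeping in $J_1$: one must interchange the $z$-integration over the contour with the $s$-integration in $\widehat{g''}(z)=\int_0^\infty e^{-zs}g''(s)\,ds$ (legitimately, since everything is absolutely convergent once $\Re z<0$ on the rays and we only integrate $s$ up to $t_n$ because of the convolution structure), and then recognize that $\frac{1}{2\pi i}\int_{\Gamma} e^{z(t_n-s)}(z^\alpha-A)^{-1}z^{-1}\,dz$ and its discrete analogue are bounded by $c(t_n-s)^{\alpha}$ and $c(t_n-s)^{\alpha-1}$ after extracting the $\tau^2|z|^{1-\alpha}$ factor — getting the exponent of $(t_n-s)$ exactly right (so that the final bound is $\tau^2\int_0^{t_n}(t_n-s)^{\alpha-1}\|g''(s)\|\,ds$ and not something with a non-integrable singularity) is where the scaling substitution $s'=(t_n-s)r$ in the ray integral must be done with care, entirely parallel to \eqref{ad3.3.09}. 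I would close by applying the triangle inequality to $\|J_1\|+\|J_2\|$.
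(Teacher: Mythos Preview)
Your proposal has a genuine gap at the step you call the ``key algebraic identity.'' The sequence $G^n=G(t_n)=\big(\tfrac{t^2}{2}\ast g''\big)(t_n)$ is a \emph{continuous} convolution evaluated at grid points, and its generating series $\widetilde G(\xi)=\sum_n G^n\xi^n$ does \emph{not} factor as $\tau\cdot(\text{discrete double integration weights})\cdot\widetilde{g''}(\xi)$. That identity would hold only if $G^n$ were a discrete convolution $\tau\sum_j\tfrac{(t_n-t_j)^2}{2}g''(t_j)$, which it is not. Consequently you cannot pull out a common factor $\widehat{g''}(z)$ from both the continuous and discrete contour integrands, and the splitting into the operator difference of Lemma~\ref{Lemma 3.11} times $\|\widehat{g''}(z)\|$ never materializes. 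Your reference to ``the mechanism already used in Lemma~\ref{lemma3.9}'' is also misplaced: that lemma treats $G(t)=\tfrac{t^l}{l!}g^{(l-1)}(0)$ with a fixed vector, and performs no interchange of $z$- and $s$-integrals.

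The paper's proof avoids the obstruction by moving to the time domain. It writes $V(t_n)=\big(\mathscr{E}\ast\tfrac{t^2}{2}\big)\ast g''\big|_{t_n}$ and $V^n=\big(\mathscr{E}_\tau\ast\tfrac{t^2}{2}\big)\ast g''\big|_{t_n}$, where $\mathscr{E}_\tau(t)=\sum_n\mathscr{E}_\tau^n\delta_{t_n}(t)$ is a Dirac comb and associativity of continuous convolution is used. The crucial step --- which has no analogue in your plan --- is to upgrade the grid-point estimate $\big\|\big((\mathscr{E}_\tau-\mathscr{E})\ast\tfrac{t^2}{2}\big)(t_n)\big\|\le c\tau^2 t_n^{\alpha-1}$ (which follows from Lemma~\ref{lemma3.9}) to a bound valid for \emph{every} $t>0$, obtained by Taylor-expanding $\big(\mathscr{E}\ast\tfrac{t^2}{2}\big)(t)$ and $\big(\mathscr{E}_\tau\ast\tfrac{t^2}{2}\big)(t)$ about the nearest grid point and controlling the remainders via the pointwise bounds $\|\mathscr{E}(t)\|\le ct^{\alpha-2}$ and $\|\mathscr{E}_\tau^n\|\le c\tau t_n^{\alpha-2}$. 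Only then can one bound the continuous convolution with $g''$ and obtain the integral $\int_0^{t_n}(t_n-s)^{\alpha-1}\|g''(s)\|\,ds$.
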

\begin{proof}
By  \eqref{LT}, we obtain
\begin{equation}\label{nas3.6}
\begin{split}
V(t_{n})
&=\frac{1}{2\pi i}\int_{\Gamma_{\theta,\kappa}}{e^{zt_{n}}(z^{\alpha} - A)^{-1} z \widehat{G}(z)}dz=(\mathscr{E}(t)\ast G(t))(t_{n})\\
&=\left(\mathscr{E}(t)\ast \left(\frac{t^{2}}{2} \ast  g''(t) \right)\right)(t_{n}) =\left(\left(\mathscr{E}(t)\ast \frac{t^{2}}{2}   \right)\ast  g''(t) \right)(t_{n})
\end{split}
\end{equation}
with
\begin{equation}\label{nas3.007}
  \mathscr{E}(t)= \frac{1}{2\pi i} \int_{\Gamma_{\theta,\kappa}} e^{zt}(z^{\alpha} - A)^{-1} z dz.
\end{equation}
From  \eqref{ads2.17}, it yields
\begin{equation*}
\begin{split}
\widetilde{V}(\xi)
&=\left( \delta^{\alpha}_{\tau}(\xi) -A\right)^{-1} \delta_{\tau}(\xi) \widetilde{G}(\xi) = \widetilde{\mathscr{E_{\tau}}}(\xi)\widetilde{G}(\xi)
=\sum^{\infty}_{n=0}\mathscr{E}^{n}_{\tau}\xi^{n}\sum^{\infty}_{j=0}G^j\xi^{j}\\
&=\sum^{\infty}_{n=0}\sum^{\infty}_{j=0}\mathscr{E}^{n}_{\tau} G^j \xi^{n+j}=\sum^{\infty}_{j=0}\sum^{\infty}_{n=j}\mathscr{E}^{n-j}_{\tau} G^j \xi^{n}
=\sum^{\infty}_{n=0}\sum^{n}_{j=0}\mathscr{E}^{n-j}_{\tau} G^j \xi^{n}=\sum^{\infty}_{n=0}V^n\xi^{n}
\end{split}
\end{equation*}
with
\begin{equation*}
V^{n}=\sum^{n}_{j=0}\mathscr{E}^{n-j}_{\tau} G^j:=\sum^{n}_{j=0}\mathscr{E}^{n-j}_{\tau} G(t_{j}).
\end{equation*}
Here $\sum^{\infty}_{n=0}\mathscr{E}^{n}_{\tau}\xi^{n}=\widetilde{\mathscr{E_{\tau}}}(\xi)=\left( \delta^{\alpha}_{\tau}(\xi) -A\right)^{-1} \delta_{\tau}(\xi)$.
From  the Cauchy's integral formula and the change of variables $\xi=e^{-z\tau}$, we obtain the representation of the $\mathscr{E}^{n}_{\tau}$ as following 
 \begin{equation*}
\mathscr{E}^{n}_{\tau}=\frac{1}{2\pi i}\int_{|\xi|=\rho}{\xi^{-n-1}\widetilde{\mathscr{E_{\tau}}}(\xi)}d\xi
=\frac{\tau}{2\pi i}\int_{\Gamma^{\tau}_{\theta,\kappa}} {e^{zt_n}\left( \delta^{\alpha}_{\tau}(e^{-z\tau}) -A\right)^{-1} \delta_{\tau}(e^{-z\tau}) }dz,
\end{equation*}
where  $\theta\in (\pi/2,\pi)$ is  sufficiently close to $\pi/2$ and $\kappa=t_{n}^{-1}$ in \eqref{Gamma}.

According to \eqref{discrete fractional resolvent estimate}, Lemma \ref{Lemma 2.3} and $\tau t^{-1}_{n} = \frac{1}{n}\leq 1$, there exists
\begin{equation}\label{3.0002}
\|\mathscr{E}^{n}_{\tau}\| \leq c \tau \left( \int^{\frac{\pi}{\tau\sin\theta}}_{\kappa} e^{rt_{n}\cos\theta} r^{1-\alpha}dr +\int^{\theta}_{-\theta}e^{\kappa t_{n}\cos\psi} \kappa^{2-\alpha}  d\psi\right)
\leq c\tau t_{n}^{\alpha-2} \leq c t_{n}^{\alpha-1}.
\end{equation}
Let $ \mathscr{E}_{\tau}(t)=\sum^{\infty}_{n=0}\mathscr{E}^{n}_{\tau}\delta_{t_{n}}(t)$, with $\delta_{t_{n}}$ being the Dirac delta function at $t_{n}$.
Then
\begin{equation}\label{nas3.8}
\begin{split}
(\mathscr{E}_{\tau}(t)\ast G(t))(t_{n})
& = \left(\sum^{\infty}_{j=0}\mathscr{E}^{j}_{\tau}\delta_{t_{j}}(t) \ast G(t) \right)(t_{n})\\
& = \sum^{n}_{j=0}\mathscr{E}^{j}_{\tau} G(t_{n}-t_{j})
  = \sum^{n}_{j=0}\mathscr{E}^{n-j}_{\tau} G(t_{j})=V^{n}.
\end{split}
\end{equation}
Moreover, using the above equation, there exist
\begin{equation*}
\begin{split}
  \widetilde{(\mathscr{E}_{\tau}\ast t^{l})}(\xi)
& = \sum^{\infty}_{n=0} \sum^{n}_{j=0}\mathscr{E}^{n-j}_{\tau}t^{l}_{j}\xi^{n}  =\sum^{\infty}_{j=0} \sum^{\infty}_{n=j}\mathscr{E}^{n-j}_{\tau}t^{l}_{j}\xi^{n}
  =\sum^{\infty}_{j=0} \sum^{\infty}_{n=0}\mathscr{E}^{n}_{\tau}t^{l}_{j}\xi^{n+j}\\
& =\sum^{\infty}_{n=0}\mathscr{E}^{n}_{\tau}\xi^{n}\sum^{\infty}_{j=0}t^{l}_{j}\xi^{j}  =\widetilde{\mathscr{E_{\tau}}}(\xi) \tau^{l} \sum^{\infty}_{j=0}j^{l}\xi^{j}
  =\widetilde{\mathscr{E}_{\tau}}(\xi) \tau^{l} \gamma_{l}(\xi).
\end{split}
\end{equation*}
From   \eqref{nas3.6}, \eqref{nas3.8} and  \eqref{nn3.7}, we have the following estimate
\begin{equation}\label{nad3.10}
\left\|\left((\mathscr{E}_{\tau}-\mathscr{E}) \ast \frac{t^{l}}{l!} \right)(t_n)\right\| \leq c\tau^{l+1}t_{n}^{\alpha-2} + c\tau^{2}t_{n}^{\alpha+l-3} \leq c\tau^{l}t_{n}^{\alpha-1} \quad l=0,1, 2.
\end{equation}

Next, we prove the following inequality \eqref{3.0003}  for $t>0$
\begin{equation}\label{3.0003}
\left\|\left((\mathscr{E}_{\tau}-\mathscr{E}) \ast \frac{t^{2}}{2} \right)(t)\right\| \leq c\tau^{2}t^{\alpha-1},\quad \forall t\in (t_{n-1},t_{n}).
\end{equation}
By Taylor series expansion of $\mathscr{E}(t)$ at $t=t_{n}$, we get
\begin{equation*}
\begin{split}
 \left( \mathscr{E} \ast \frac{t^{2}}{2} \right)(t)
=&\left( \mathscr{E} \ast \frac{t^{2}}{2} \right)(t_{n})+ (t-t_{n}) \left( \mathscr{E} \ast t \right)(t_{n}) \\
& + \frac{(t-t_{n})^{2}}{2} \left( \mathscr{E} \ast 1 \right)(t_{n})
 + \frac{1}{2}\int^{t}_{t_{n}}(t-s)^{2} \mathscr{E}(s)ds,
\end{split}
\end{equation*}
which  also holds  for $  \left( \mathscr{E}_{\tau} \ast t^{2} \right)(t) $.
Therefore, using  \eqref{nad3.10},  it yields
\begin{equation*}
\left\|\left((\mathscr{E}_{\tau}-\mathscr{E}) \ast \frac{t^{l}}{l!} \right)(t_{n})\right\| \leq c\tau^{l+1}t_{n}^{\alpha-2} + c\tau^{2}t_{n}^{\alpha+l-3} \leq c\tau^{l}t_{n}^{\alpha-1} \leq c\tau^{l}t^{\alpha-1}.
\end{equation*}
According to  \eqref{nas3.007}, \eqref{fractional resolvent estimate} and \eqref{ad3.3.09}, one has
\begin{equation*}
\begin{split}
\| \mathscr{E}(t) \|
\leq c \left( \int^{\infty}_{\kappa}e^{rt\cos\theta}r^{1-\alpha}dr + \int^{\theta}_{-\theta}e^{\kappa t \cos\psi}\kappa^{2-\alpha}d\psi \right)
\leq c t^{\alpha-2}.
\end{split}
\end{equation*}
Moreover, we get
\begin{equation*}
\left\| \int^{t}_{t_{n}}(t-s)^{2} \mathscr{E}(s)ds \right\| \leq c \int^{t_{n}}_{t}(s-t)^{2} s^{\alpha-2}ds \leq c \int^{t_{n}}_{t}(s-t) s^{\alpha-1}ds \leq  c \tau^{2} t^{\alpha-1}.
\end{equation*}
Using  the definition of $ \mathscr{E}_{\tau}(t)=\sum^{\infty}_{n=0}\mathscr{E}^{n}_{\tau}\delta_{t_{n}}(t)$ in \eqref{nas3.8} and \eqref{3.0002}, we deduce
\begin{equation*}
\left\| \int^{t}_{t_{n}}(t-s)^{2} \mathscr{E}_{\tau}(s)ds \right\|\leq  (t_n-t)^{2}  \| \mathscr{E}^{n}_{\tau} \|  \leq c \tau^{3} t_{n}^{\alpha-2} \leq c \tau^{2} t_{n}^{\alpha-1} \leq c \tau^{2} t^{\alpha-1}, ~ \forall \ t\!\in \! (t_{n-1},t_{n}).
\end{equation*}
By \eqref{nad3.10} and the above inequalities, it yields the inequality \eqref{3.0003}.
The proof is completed.
\end{proof}

\begin{theorem}[ID1-BDF2]\label{addtheorema3.1}
Let $V(t_{n})$ and $V^{n}$ be the solutions of \eqref{rrfee} and \eqref{2.3}, respectively. Let $v\in L^{2}(\Omega)$, $g\in C^{1}([0,T]; L^{2}(\Omega))$ and $\int_{0}^{t}  (t-s)^{\alpha-1} \left\| g''(s) \right\| ds <\infty$.  Then the following error estimate holds for any $t_n>0$:
\begin{equation*}
\begin{split}
&\left\|V^{n}-V(t_{n})\right\|\\
&\quad\leq c\tau^{2}   \left( t^{-2}_{n} \|v\|  +  t^{\alpha-2}_{n} \left\|  g(0) \right\|   +  t^{\alpha-1}_{n} \left\|  g'(0) \right\|  +  \int_{0}^{t_{n}} (t_n-s)^{\alpha-1} \left\| g''(s) \right\| ds \right).
\end{split}
\end{equation*}
\end{theorem}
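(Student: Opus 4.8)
The plan is to use the linearity of \eqref{rrfee} and of the scheme \eqref{2.3}: write $V=V_v+V_g$ and $V^n=V_v^n+V_g^n$, where $(V_v,V_v^n)$ are driven only by the term $Av$ (i.e.\ by $\partial_t(t\,Av)$ in the ID1 formulation) and $(V_g,V_g^n)$ only by $g$ (with $v=0$). It then remains to bound the two errors separately and add them by the triangle inequality. For the source part I would further split $G(t)=J^1g(t)$ by Taylor's formula with integral remainder about $t=0$; since $G(0)=J^1g(0)=0$, $G'(0)=g(0)$, $G''(0)=g'(0)$ and $G'''=g''$, this gives $G(t)=t\,g(0)+\frac{t^2}{2}g'(0)+\frac{t^2}{2}\ast g''(t)$, so $V_g-V_g^n$ breaks into the errors generated by these three elementary source terms, each of which is already covered by Section 3.

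For the initial-data contribution I would start from the contour representation \eqref{LT} (with $\widehat G\equiv0$) and from Lemma \ref{Lemma2.1} (with $\widetilde G\equiv0$), which give $V_v(t_n)$ as an integral of $e^{zt_n}(z^\alpha-A)^{-1}z^{-1}Av$ over $\Gamma_{\theta,\kappa}$ and $V_v^n$ as an integral of $e^{zt_n}(\delta^\alpha_\tau(e^{-z\tau})-A)^{-1}\delta_\tau(e^{-z\tau})\gamma_1(e^{-z\tau})\tau^2Av$ over the truncated contour $\Gamma^\tau_{\theta,\kappa}$. Choosing $\kappa=t_n^{-1}$, I would split $V_v(t_n)-V_v^n$ into the integral over $\Gamma^\tau_{\theta,\kappa}$ of the difference of the two kernels and the tail integral over $\Gamma_{\theta,\kappa}\setminus\Gamma^\tau_{\theta,\kappa}$ of the continuous kernel. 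The first is controlled by Lemma \ref{addLemma 3.6}, whose $c\tau^2|z|$ bound integrates (with $\kappa=t_n^{-1}$) to $c\tau^2t_n^{-2}\|v\|$; for the tail I would use $(z^\alpha-A)^{-1}z^{-1}A=-z^{-1}+(z^\alpha-A)^{-1}z^{\alpha-1}$ together with \eqref{fractional resolvent estimate} to bound the kernel by $c|z|^{-1}$, then convert this to $c\tau^2|z|$ via $1\le(\frac{\sin\theta}{\pi})^2\tau^2|z|^2$ on $\Gamma_{\theta,\kappa}\setminus\Gamma^\tau_{\theta,\kappa}$, again obtaining $c\tau^2t_n^{-2}\|v\|$.

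For the source contribution I would just apply the Section 3 lemmas to the three pieces of $G$: Lemma \ref{lemma3.9} with $l=1$ applied to $t\,g(0)$ yields $c\tau^2t_n^{\alpha-2}\|g(0)\|$; Lemma \ref{lemma3.9} with $l=2$ applied to $\frac{t^2}{2}g'(0)$ yields $c\tau^3t_n^{\alpha-2}\|g'(0)\|+c\tau^2t_n^{\alpha-1}\|g'(0)\|\le c\tau^2t_n^{\alpha-1}\|g'(0)\|$ since $\tau\le t_n$; and Lemma \ref{lemma3.10} applied to the remainder $\frac{t^2}{2}\ast g''(t)$ (whose weighted integral is finite by hypothesis) yields $c\tau^2\int_0^{t_n}(t_n-s)^{\alpha-1}\|g''(s)\|\,ds$. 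Adding the four bounds gives exactly the asserted estimate.

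The routine part is the contour bookkeeping; the step that needs real care is the initial-data term, because the semidiscrete scheme formally requires $v\in\mathcal D(A)$ while the statement only assumes $v\in L^2(\Omega)$. The way around this is never to isolate ``$Av$'': one keeps the full operators $(\delta^\alpha_\tau(e^{-z\tau})-A)^{-1}\delta_\tau(e^{-z\tau})\gamma_1(e^{-z\tau})\tau^2A-(z^\alpha-A)^{-1}z^{-1}A$ and, on the tail, $(z^\alpha-A)^{-1}z^{-1}A$, both of which are bounded on $L^2(\Omega)$ (the first by $c\tau^2|z|$ from Lemma \ref{addLemma 3.6}). Reading those lemmas as operator-norm estimates is precisely what makes the result hold for nonsmooth data, and is the main obstacle to handle correctly.
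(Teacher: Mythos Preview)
Your proposal is correct and follows essentially the same route as the paper: the paper subtracts the contour representations \eqref{LT} and \eqref{DLT}, splits the $Av$ contribution into the difference on $\Gamma^\tau_{\theta,\kappa}$ (bounded via Lemma \ref{addLemma 3.6}) and the tail on $\Gamma_{\theta,\kappa}\setminus\Gamma^\tau_{\theta,\kappa}$ (bounded by $c|z|^{-1}\|v\|$ using exactly the identity you wrote), and handles the $G$ part by the Taylor splitting $G(t)=t\,g(0)+\tfrac{t^2}{2}g'(0)+\tfrac{t^2}{2}\ast g''(t)$ together with Lemmas \ref{lemma3.9} and \ref{lemma3.10}. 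Your remark that Lemma \ref{addLemma 3.6} must be read as an $L^2$-operator-norm bound to avoid needing $v\in\mathcal D(A)$ is precisely the right observation.
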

\begin{proof}
Subtracting \eqref{LT} from \eqref{DLT}, we obtain
\begin{equation*}
V^{n}-V(t_{n})=I_{1}-I_{2}+I_{3}
\end{equation*}
with
\begin{equation*}
\begin{split}
I_{1} =& \frac{1}{2\pi i} \int_{\Gamma^{\tau}_{\theta,\kappa}} e^{zt_{n}}  \left[ \left( \delta^{\alpha}_{\tau}(e^{-z\tau}) -A\right)^{-1} \delta_{\tau}(e^{-z\tau})   \gamma_{1}(e^{-z\tau}) \tau^{2}  - (z^{\alpha}-A)^{-1} z^{-1}\right] Av dz,\\
I_{2} =& \frac{1}{2\pi i} \int_{\Gamma_{\theta,\kappa}\backslash \Gamma^{\tau}_{\theta,\kappa}} e^{zt_{n}}  (z^{\alpha}-A)^{-1} z^{-1}  Av dz,\\
I_{3} =& \frac{\tau}{2\pi i}\int_{\Gamma^{\tau}_{\theta,\kappa}} e^{zt_n} \left( \delta^{\alpha}_{\tau}(e^{-z\tau}) -A\right)^{-1} \delta_{\tau}(e^{-z\tau})  \widetilde{G} (e^{-z\tau}) dz \\
& - \frac{1}{2\pi i} \int_{\Gamma_{\theta, \kappa}} e^{zt_n}  (z^{\alpha}-A)^{-1} z \widehat{G}(z) dz.
\end{split}
\end{equation*}
According to the  Lemma \ref{addLemma 3.6}, we estimate the first term $I_{1}$ as following
\begin{equation}\label{add3.161}
\begin{split}
\left\|I_{1}\right\|
\leq & c\tau^{2} \left\| v \right\|  \int_{\Gamma^{\tau}_{\theta,\kappa}} \left|e^{zt_{n}}\right| |z| |dz| \\
\leq & c\tau^{2} \left\| v \right\|  \left( \int^{\frac{\pi}{\tau\sin\theta}}_{\kappa} e^{r t_{n}\cos\theta} r dr
    +  \int^{\theta}_{-\theta} e^{\kappa t_{n}\cos\psi} \kappa^{2} d\psi \right)\\
\leq &  c\tau^{2} t_{n}^{-2} \left\| v \right\|.
\end{split}
\end{equation}
Using the resolvent estimate \eqref{fractional resolvent estimate}, we estimate the second term $I_{2}$ as following
\begin{equation} \label{add3.162}
\left\|I_{2}\right\|
\leq c\int_{\Gamma_{\theta,\kappa}\backslash \Gamma^{\tau}_{\theta,\kappa}} {\left|e^{zt_{n}}\right||z|^{-1} \left\| v \right\|_{L^2(\Omega)} } |dz|
\leq c\tau^{2} t^{-2}_{n}\left\| v \right\|_{L^2(\Omega)},
\end{equation}
since
\begin{equation}\label{add3.16}
\begin{split}
\int_{\Gamma_{\theta,\kappa}\backslash \Gamma^{\tau}_{\theta,\kappa}} \left|e^{zt_{n}}\right||z|^{-1}  |dz|
& = \int^{\infty}_{\frac{\pi}{\tau \sin \theta}} e^{r t_{n} \cos \theta} r ^{-1}  dr \\
& \leq  c \tau^{2} \int^{\infty}_{\frac{\pi}{\tau \sin \theta}} e^{r t_{n} \cos \theta} r   dr \leq c \tau^{2} t^{-2}_{n}
\end{split}
\end{equation}
with $1\leq (\frac{\sin \theta}{\pi})^{2} \tau^{2} r^{2}$, $r\tau\geq \frac{\pi}{\sin \theta}$.

From Lemmas \ref{lemma3.9} and \ref{lemma3.10} with $G(t) = t g(0)+  \frac{t^2}{2} g'(0) + \frac{t^2}{2}  \ast  g''(t)$, there exist
\begin{equation*}
\left\|I_{3}\right\|\leq c\tau^{2}t_{n}^{\alpha-2} \left\|  g(0) \right\| + c\tau^{2}t_{n}^{\alpha-1} \left\|  g'(0) \right\| + c\tau^{2}  \int_{0}^{t_{n}}  (t_n-s)^{\alpha-1} \left\| g''(s) \right\|  ds.
\end{equation*}
The proof is completed.
\end{proof}

\begin{theorem}[ID2-BDF2]\label{addtheorema3.2}
Let $V(t_{n})$ and $V^{n}$ be the solutions of \eqref{rrrfee} and \eqref{4.1}, respectively. Let $v\in L^{2}(\Omega)$, $g\in C^{1}([0,T]; L^{2}(\Omega))$ and $\int_{0}^{t}  (t-s)^{\alpha-1} \left\| g''(s) \right\| ds <\infty$.  Then the following error estimate holds for any $t_n>0$:
\begin{equation*}
\begin{split}
&\left\|V^{n}-V(t_{n})\right\|\\
&\quad\leq c\tau^{2}   \left( t^{-2}_{n} \|v\|  +  t^{\alpha-2}_{n} \left\|  g(0) \right\|   +  t^{\alpha-1}_{n} \left\|  g'(0) \right\|  +  \int_{0}^{t_{n}} (t_n-s)^{\alpha-1} \left\| g''(s) \right\| ds \right).
\end{split}
\end{equation*}
\end{theorem}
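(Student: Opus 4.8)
The plan is to repeat, almost verbatim, the argument of Theorem \ref{addtheorema3.1}, but starting from the ID2 solution representations: the continuous formula \eqref{LT2} and the discrete formula of Lemma \ref{addLemma2.1}. Subtracting the former from the latter and choosing $\kappa=t_{n}^{-1}$ in \eqref{Gamma}, one writes $V^{n}-V(t_{n})=I_{1}-I_{2}+I_{3}$, where
\begin{equation*}
I_{1}=\frac{1}{2\pi i}\int_{\Gamma^{\tau}_{\theta,\kappa}}e^{zt_{n}}\left[\left(\delta^{\alpha}_{\tau}(e^{-z\tau})-A\right)^{-1}\delta^{2}_{\tau}(e^{-z\tau})\frac{\gamma_{2}(e^{-z\tau})}{2}\tau^{3}-(z^{\alpha}-A)^{-1}z^{-1}\right]Av\,dz,
\end{equation*}
$I_{2}=\frac{1}{2\pi i}\int_{\Gamma_{\theta,\kappa}\backslash\Gamma^{\tau}_{\theta,\kappa}}e^{zt_{n}}(z^{\alpha}-A)^{-1}z^{-1}Av\,dz$, and $I_{3}$ is the source-term difference assembled from $\mathcal{G}=J^{2}g$. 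The point is that the only structural change from the ID1 case is that $\delta_{\tau}\gamma_{1}\tau^{2}$ and $\delta_{\tau}\widetilde{G}$ get replaced by $\delta^{2}_{\tau}\tfrac{\gamma_{2}}{2}\tau^{3}$ and $\delta^{2}_{\tau}\widetilde{\mathcal{G}}$; since $\delta^{2}_{\tau}(e^{-z\tau})\tfrac{\gamma_{2}(e^{-z\tau})}{2}\tau^{3}\approx z^{-1}$ and $\delta^{2}_{\tau}(e^{-z\tau})z^{2}\widehat{\mathcal{G}}(z)\approx \widehat{g}(z)$, the same bounds should go through.

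The term $I_{2}$ is estimated exactly as in \eqref{add3.162}--\eqref{add3.16}, giving $\|I_{2}\|\leq c\tau^{2}t_{n}^{-2}\|v\|$. For $I_{1}$ I would first establish the ID2 counterpart of Lemma \ref{addLemma 3.6},
\begin{equation*}
\left\|\left(\delta^{\alpha}_{\tau}(e^{-z\tau})-A\right)^{-1}\delta^{2}_{\tau}(e^{-z\tau})\frac{\gamma_{2}(e^{-z\tau})}{2}\tau^{3}A-(z^{\alpha}-A)^{-1}z^{-1}A\right\|\leq c\tau^{2}|z|,\quad z\in\Gamma^{\tau}_{\theta,\kappa}.
\end{equation*}
Its proof is the same four-term decomposition used for Lemma \ref{addLemma 3.6}: transfer the unbounded operator $A$ onto the resolvents via $(z^{\alpha}-A)^{-1}z^{-1}A=-z^{-1}+(z^{\alpha}-A)^{-1}z^{\alpha-1}$ and $(\delta^{\alpha}_{\tau}-A)^{-1}\delta^{2}_{\tau}\tfrac{\gamma_{2}}{2}\tau^{3}A=-\delta^{2}_{\tau}\tfrac{\gamma_{2}}{2}\tau^{3}+(\delta^{\alpha}_{\tau}-A)^{-1}\delta^{\alpha}_{\tau}\delta^{2}_{\tau}\tfrac{\gamma_{2}}{2}\tau^{3}$, then apply \eqref{fractional resolvent estimate}, \eqref{discrete fractional resolvent estimate}, the identity \eqref{identity1}, the bound $|\delta^{\alpha}_{\tau}(e^{-z\tau})-z^{\alpha}|\leq c\tau^{2}|z|^{2+\alpha}$ of Lemma \ref{Lemma 2.3}, and the scalar estimate $|\delta^{2}_{\tau}(e^{-z\tau})\tfrac{\gamma_{2}(e^{-z\tau})}{2}\tau^{3}-z^{-1}|\leq c\tau^{2}|z|$. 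The last estimate is the ID2 analogue of Lemma \ref{Lemma 3.4} and follows by multiplying out $\delta^{2}_{\tau}(e^{-z\tau})=z^{2}+O(\tau^{2}|z|^{4})$ (squaring Lemma \ref{Lemma 2.3}) against $\tfrac{\gamma_{2}(e^{-z\tau})}{2}\tau^{3}=z^{-3}+O(\tau^{3})$ (Lemma \ref{Lemma nn3.3} with $l=2$) and using $\tau|z|\leq c$ on $\Gamma^{\tau}_{\theta,\kappa}$. With this lemma, $\|I_{1}\|\leq c\tau^{2}\|v\|\int_{\Gamma^{\tau}_{\theta,\kappa}}|e^{zt_{n}}||z|\,|dz|\leq c\tau^{2}t_{n}^{-2}\|v\|$ exactly as in \eqref{add3.161}.

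For $I_{3}$ I would use that $\mathcal{G}=J^{2}g$ satisfies $\mathcal{G}(0)=\mathcal{G}'(0)=0$ and $\mathcal{G}''=g$, so Taylor's formula with integral remainder gives $\mathcal{G}(t)=\frac{t^{2}}{2}g(0)+\frac{t^{3}}{6}g'(0)+\frac{t^{3}}{6}\ast g''(t)$, and estimate the three pieces separately. For a monomial $\mathcal{G}(t)=\frac{t^{m}}{m!}c$ ($m=2,3$) I would prove the ID2 analogue of Lemma \ref{lemma3.9}, namely $\|V(t_{n})-V^{n}\|\leq(c\tau^{m+1}t_{n}^{\alpha-3}+c\tau^{2}t_{n}^{\alpha+m-4})\|c\|$; its proof copies that of Lemma \ref{lemma3.9}, splitting the contour difference into the part over $\Gamma^{\tau}_{\theta,\kappa}$, controlled by the ID2 versions of Lemmas \ref{Lemma 3.4} and \ref{Lemma 3.11} (which give $\|(\delta^{\alpha}_{\tau}-A)^{-1}\delta^{2}_{\tau}\tfrac{\gamma_{m}}{m!}\tau^{m+1}-(z^{\alpha}-A)^{-1}z^{1-m}\|\leq c\tau^{m+1}|z|^{2-\alpha}+c\tau^{2}|z|^{3-m-\alpha}$, and which need the $l=3$ case of Lemma \ref{Lemma nn3.3} obtained as for $l=1,2$), plus the tail over $\Gamma_{\theta,\kappa}\backslash\Gamma^{\tau}_{\theta,\kappa}$, controlled by \eqref{fractional resolvent estimate} together with $1\leq(\tfrac{\sin\theta}{\pi})^{2}\tau^{2}|z|^{2}$; the resulting integrals are bounded with $\kappa=t_{n}^{-1}$ as in \eqref{ad3.3.09}. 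Since $\tau\leq t_{n}$ the bracket is $\leq c\tau^{2}t_{n}^{\alpha-2}$ for $m=2$ and $\leq c\tau^{2}t_{n}^{\alpha-1}$ for $m=3$, giving the $\|g(0)\|$ and $\|g'(0)\|$ contributions. For the remainder piece I would prove the ID2 analogue of Lemma \ref{lemma3.10}: with $\mathscr{E}_{2}(t)=\frac{1}{2\pi i}\int_{\Gamma_{\theta,\kappa}}e^{zt}(z^{\alpha}-A)^{-1}z^{2}\,dz$ (so $\|\mathscr{E}_{2}(t)\|\leq ct^{\alpha-3}$, $\|(\mathscr{E}_{2}\ast t)(t)\|\leq ct^{\alpha-1}$, and the associated discrete kernel satisfies $\|\mathscr{E}_{2,\tau}^{n}\|\leq c\tau t_{n}^{\alpha-3}$), one writes $V(t_{n})-V^{n}=\int_{0}^{t_{n}}((\mathscr{E}_{2,\tau}-\mathscr{E}_{2})\ast\tfrac{t^{3}}{6})(t_{n}-s)\,g''(s)\,ds$ and establishes $\|((\mathscr{E}_{2,\tau}-\mathscr{E}_{2})\ast\tfrac{t^{3}}{6})(t)\|\leq c\tau^{2}t^{\alpha-1}$ for $t\in(t_{n-1},t_{n})$ by a second-order Taylor expansion of $\mathscr{E}_{2}\ast\tfrac{t^{3}}{6}$ about $t_{n}$ (whose second derivative is $\mathscr{E}_{2}\ast t$), invoking the $m=2,3$ grid-point estimates above and bounding $\int_{t_{n}}^{t}(t-s)^{2}\mathscr{E}_{2}(s)\,ds$ and its discrete counterpart just as in Lemma \ref{lemma3.10}; this yields $c\tau^{2}\int_{0}^{t_{n}}(t_{n}-s)^{\alpha-1}\|g''(s)\|\,ds$. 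Summing the three contributions to $I_{3}$ with the bounds on $I_{1},I_{2}$ gives the claim.

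The hard part is the initial-data lemma, i.e.\ the ID2 analogue of Lemma \ref{addLemma 3.6}: because $\delta^{2}_{\tau}\tfrac{\gamma_{2}}{2}\tau^{3}$ is only $O(|z|^{-1})$, one cannot bound the product $(\delta^{\alpha}_{\tau}-A)^{-1}\delta^{2}_{\tau}\tfrac{\gamma_{2}}{2}\tau^{3}A$ directly; the operator $A$ must be moved onto the resolvents through the algebraic identities above, and one must then keep precise track of the powers of $|z|$ so that each of the four error terms is exactly $O(\tau^{2}|z|)$ on $\Gamma^{\tau}_{\theta,\kappa}$ — this is what makes the $I_{1}$ contour integral $O(\tau^{2}t_{n}^{-2})$ rather than merely $O(\tau^{2}t_{n}^{\alpha-2})$. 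A secondary, purely technical point is carrying the extra factor $\delta^{2}_{\tau}$ (in place of $\delta_{\tau}$) and the cubic monomial (hence $\gamma_{3}$) through Lemmas \ref{Lemma nn3.3}--\ref{Lemma 3.11}, which are routine extensions of the first-power, $l=1,2$ computations already carried out in Section \ref{Se:conver}.
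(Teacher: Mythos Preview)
Your plan is exactly what the paper has in mind: its entire proof of Theorem~\ref{addtheorema3.2} is the single sentence ``Similar arguments can be performed as Theorem~\ref{addtheorema3.1}, we omit it here,'' and what you have written is a faithful ID2 transcription of that argument, including the correct initial-data lemma (the $\delta^{2}_{\tau}\tfrac{\gamma_{2}}{2}\tau^{3}$ analogue of Lemma~\ref{addLemma 3.6}) and the correct monomial/remainder decomposition of $\mathcal{G}$.

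One small bookkeeping slip in the remainder step: a second-order Taylor expansion of $\mathscr{E}_{2}\ast\tfrac{t^{3}}{6}$ about $t_{n}$ uses the values of $\mathscr{E}_{2}\ast\tfrac{t^{m}}{m!}$ at $t_{n}$ for $m=1,2,3$ (not only $m=2,3$), and its integral remainder is $\int_{t_{n}}^{t}\tfrac{(t-s)^{2}}{2}(\mathscr{E}_{2}\ast 1)(s)\,ds$, not $\int_{t_{n}}^{t}(t-s)^{2}\mathscr{E}_{2}(s)\,ds$. If you want the remainder to contain $\mathscr{E}_{2}$ itself so as to copy Lemma~\ref{lemma3.10} line for line, take the expansion one order higher (grid-point estimates for $m=0,1,2,3$ and remainder $\int_{t_{n}}^{t}\tfrac{(t-s)^{3}}{6}\mathscr{E}_{2}(s)\,ds$); all the extra bounds follow from your ID2 versions of Lemmas~\ref{Lemma nn3.3}--\ref{Lemma 3.11} together with $\tau\le t_{n}$. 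Either choice closes the argument.
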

\begin{proof}
Similar arguments can be performed as  Theorem \ref{addtheorema3.1}, we omit it here.
\end{proof}

\section{Convergence analysis: Singular  source function $t^{\mu}q(x)$, $\mu\geq -\alpha$}\label{Se:WSST}
Form Theorem \ref{addtheorema3.1} and Theorem \ref{addtheorema3.2}, it seems that there are no difference between  ID1-BDF2 and ID2-BDF2
for general source function. However, both of them are very different for the  singular  source function with the form  $t^{\mu}q(x)$.

\subsection{Low regularity source term}
In the section, we first consider low regularity source term $g(x,t)=t^{\mu}q(x)$ with $\mu >0$  for subdiffusion \eqref{rrfee}.
We  introduce the polylogarithm function or Bose-Einstein integral
\begin{equation}\label{polylogarithm function}
Li_{p}(\xi)= \sum_{j=1}^{\infty} \frac{\xi^{j}}{j^{p}},~~p\notin \mathbb{N}.
\end{equation}
\begin{lemma}\cite{JLZ:2016,YKF:2018}\label{addLemma:LipCA}
Let $|z\tau|\leq \frac{\pi}{\sin\theta}$ and  $\theta>\pi/2$ be close to $\pi/2$, and $p\neq 1, 2, \ldots$. The series
\begin{equation}\label{LpSE}
Li_{p}(e^{-z\tau}) = \Gamma(1-p)(z\tau)^{p-1} + \sum_{j=0}^{\infty} (-1)^{j} \zeta(p-j)\frac{(z\tau)^{j}}{j!}
\end{equation}
converges absolutely. Here $\zeta$ denotes the Riemann zeta function, namely, $\zeta(p)=Li_{p}(1)$.
\end{lemma}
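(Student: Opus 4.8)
The plan is to read the claimed expansion as Jonquière's classical inversion formula for the polylogarithm near $\xi=1$, written in the variable $\mu:=-z\tau$ (so that $e^{-z\tau}=e^{\mu}$ and $z\tau=-\mu$): the assertion is that
\[
Li_{p}(e^{\mu})=\Gamma(1-p)(-\mu)^{p-1}+\sum_{j=0}^{\infty}\frac{\zeta(p-j)}{j!}\,\mu^{j},\qquad p\notin\{1,2,\dots\},
\]
together with absolute convergence of the series. The hypothesis enters twice: $|z\tau|\le\pi/\sin\theta$ with $\theta$ close to $\pi/2$ forces $|\mu|<2\pi$, which is exactly the radius of validity of this expansion; and, since $z$ ranges over a sector about the positive real axis, $-\mu=z\tau$ avoids $(-\infty,0]$ and $\mu\notin 2\pi i\mathbb{Z}$, so $(-\mu)^{p-1}=(z\tau)^{p-1}$ is single-valued (principal branch) and $e^{\mu}\neq 1$. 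I would treat absolute convergence and the identification of the sum separately.

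\emph{Absolute convergence.} This is the part that directly uses the size constraint on $z\tau$. From the functional equation $\zeta(s)=2^{s}\pi^{s-1}\sin(\pi s/2)\Gamma(1-s)\zeta(1-s)$ with $s=p-j$, and since $\zeta(1-p+j)\to 1$ and $|\sin(\pi(p-j)/2)|\le 1$, one gets $|\zeta(p-j)|\le C\,(2\pi)^{-j}\,\Gamma(j+1-p)$ for $j$ large. Stirling's formula then yields
\[
\Big|\frac{\zeta(p-j)}{j!}\,\mu^{j}\Big|\le C\,\frac{\Gamma(j+1-p)}{\Gamma(j+1)}\Big(\frac{|\mu|}{2\pi}\Big)^{j}\le C'\,j^{-\Re p}\Big(\frac{|\mu|}{2\pi}\Big)^{j},
\]
which is summable precisely when $|\mu|<2\pi$; this is guaranteed by $|\mu|=|z\tau|\le\pi/\sin\theta<2\pi$, and the bound is uniform on compact subsets of $\{|\mu|<2\pi\}\times(\mathbb{C}\setminus\mathbb{Z}_{\ge 1})$, so the series defines a holomorphic function of $(\mu,p)$ there. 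The leading term $(z\tau)^{p-1}$ is finite on the admissible region, so the stated convergence claim follows once the sum is identified.

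\emph{The identity.} For $\Re p>0$ and $\Re\mu<0$ I would start from the Bose--Einstein integral $Li_{p}(e^{\mu})=\Gamma(p)^{-1}\int_{0}^{\infty}t^{p-1}(e^{t-\mu}-1)^{-1}\,dt$, insert the Mellin--Barnes representation $(e^{u}-1)^{-1}=\frac{1}{2\pi i}\int_{(c)}\Gamma(w)\zeta(w)u^{-w}\,dw$ with $u=t-\mu$ and $c>\max(1,\Re p)$, apply Fubini, and evaluate the inner integral by the beta integral $\int_{0}^{\infty}t^{p-1}(t-\mu)^{-w}\,dt=(-\mu)^{p-w}\Gamma(p)\Gamma(w-p)/\Gamma(w)$ (valid for $0<\Re p<\Re w$). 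This gives $Li_{p}(e^{\mu})=\frac{1}{2\pi i}\int_{(c)}\zeta(w)\Gamma(w-p)(-\mu)^{p-w}\,dw$. Now push the contour to the left; the integrand has a simple pole at $w=1$ from $\zeta$ (residue $1$), contributing $\Gamma(1-p)(-\mu)^{p-1}$, and simple poles at $w=p-j$, $j\ge 0$, from $\Gamma(w-p)$ (residue $(-1)^{j}/j!$), contributing $\zeta(p-j)\frac{(-1)^{j}}{j!}(-\mu)^{j}=\frac{\zeta(p-j)}{j!}\mu^{j}$. The integral over the shifted line $\Re w=-m$ behaves, after using the functional equation and Stirling on $\zeta(w)\Gamma(w-p)$, like $m^{-\Re p}(|\mu|/2\pi)^{m}$ times an absolutely convergent integral in $\Im w$, hence tends to $0$ as $m\to\infty$ exactly when $|\mu|<2\pi$. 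Summing the residues gives the identity for $\Re p>0$, $\Re\mu<0$, $|\mu|<2\pi$; analytic continuation in $\mu$ across $\{|\mu|<2\pi\}\setminus 2\pi i\mathbb{Z}$ (staying off the branch cut of $(-\mu)^{p-1}$), and then in $p$ across $\mathbb{C}\setminus\mathbb{Z}_{\ge 1}$ (both sides meromorphic, the series uniformly convergent on compacta by the previous step), extends it to all admissible parameters. Substituting $\mu=-z\tau$ returns the form in the statement.

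\emph{Main obstacle.} The routine ingredient is the convergence bound from the $\zeta$ functional equation. The delicate points are the justification of Fubini for the iterated Mellin--Barnes integral and, above all, the contour-shift estimate showing that the integral on $\Re w=-m$ vanishes as $m\to\infty$: this is where the hypothesis $|z\tau|\le\pi/\sin\theta<2\pi$ is genuinely needed, and it requires careful control of $\zeta(w)\Gamma(w-p)$ on vertical lines far to the left via the functional equation and Stirling, uniformly in $\Im w$. One must also keep the two analytic continuations (in $\mu$ and in $p$) consistent, in particular the branch of $(-\mu)^{p-1}=(z\tau)^{p-1}$. An alternative route replaces the Mellin--Barnes step by the Hankel-contour representation $Li_{p}(e^{\mu})=-\frac{\Gamma(1-p)}{2\pi i}\int_{\mathcal{H}}(-t)^{p-1}(e^{t-\mu}-1)^{-1}\,dt$ and evaluates it by expanding the contour and summing residues at $t=\mu+2\pi i n$; the bookkeeping of pole versus branch-point contributions there presents the same obstacle in another guise.
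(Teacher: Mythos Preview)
The paper does not prove this lemma at all: it is stated with a citation to \cite{JLZ:2016,YKF:2018} and used as a black box (the expansion is classical, going back to Jonqui\`ere and Lindel\"of, and is recorded for instance in Erd\'elyi's \emph{Higher Transcendental Functions}). So there is nothing to compare your argument against in the present paper.

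That said, your proposal is a correct and essentially complete derivation. The absolute-convergence step via the functional equation $\zeta(p-j)=2^{p-j}\pi^{p-j-1}\sin\frac{\pi(p-j)}{2}\,\Gamma(1-p+j)\,\zeta(1-p+j)$ and Stirling's estimate $\Gamma(j+1-p)/j!\sim j^{-p}$ is exactly the standard one and pins down the radius $|z\tau|<2\pi$, which the hypothesis $|z\tau|\le\pi/\sin\theta$ with $\theta$ close to $\pi/2$ indeed secures. For the identity, the Mellin--Barnes route you sketch is one of the two canonical proofs; your description of the residue collection at $w=1$ and $w=p-j$ is correct. Two small points of care, both of which you already flag: in the Fubini step it is cleanest to first take $\mu<0$ real so that $u=t-\mu>0$ and no branch choice for $u^{-w}$ is needed, then analytically continue in $\mu$; and the contour-shift remainder estimate on $\Re w=-m$ really does need the functional equation rewritten as $\zeta(w)\Gamma(w-p)=O\bigl((2\pi)^{-|w|}\Gamma(1-w)\Gamma(w-p)\bigr)$ together with Stirling on vertical lines, so that the exponential decay in $|\Im w|$ is retained uniformly as $m\to\infty$. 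The Hankel-contour alternative you mention is in fact the version most textbooks present and avoids the iterated-integral bookkeeping; either suffices here.
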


Let $G(t)=J^1g(t) = \frac{t^{\mu+1}}{\mu+1}q$. Using  $\widehat{G}(z) =  \frac{\Gamma(\mu+1)}{z^{\mu+2}} q$
and  \eqref{LT}, we have 
\begin{equation}\label{add4.1}
\begin{split}
V(t)
&= \frac{1}{2\pi i} \int_{\Gamma_{\theta, \kappa}} e^{zt}  (z^{\alpha}-A)^{-1}\left( z^{-1}Av   + \frac{\Gamma(\mu+1)}{z^{\mu+1}} q \right) dz.
\end{split}
\end{equation}

From  \eqref{DLT}, the  discrete solution for the  subdiffusion \eqref{2.3} is
\begin{equation}\label{add4.4}
V^{n}=\frac{1}{2\pi i}\int_{\Gamma^{\tau}_{\theta,\kappa}} e^{zt_{n}} (\delta^{\alpha}_{\tau}(e^{-z\tau})-A)^{-1} \delta_{\tau}(e^{-z\tau}) \tau \left( \gamma_{1}(e^{-z\tau}) \tau A v + \widetilde{G}(e^{-z\tau}) \right) dz
\end{equation}
with $\gamma_{1}(e^{-z\tau})=\frac{e^{-z\tau}}{\left(1-e^{-z\tau}\right)^2}$ and $\Gamma^{\tau}_{\theta, \kappa}=\{z\in \Gamma_{\theta, \kappa}: |\Im z|\leq \pi / \tau\}$.
Here
\begin{equation*}
\widetilde{G}(\xi)
= \sum^{\infty}_{n=1} G^{n} \xi^{n}
= q \frac{\tau^{\mu+1}}{\mu+1}  \sum^{\infty}_{n=1} \frac{\xi^{n}}{n^{-\mu-1}} = q \frac{\tau^{\mu+1}}{\mu+1} Li_{-\mu-1}(\xi) \quad  {\rm with} \quad 0< \mu <1.
\end{equation*}

\begin{lemma}\label{addLemma 4.3}
Let $\delta^{\alpha}_{\tau}$  is given by \eqref{2.2} and $\gamma_{l}(\xi)=\sum^{\infty}_{n=1}n^{l} \xi^{n}=\left( \xi\frac{d}{d\xi} \right)^{l} \frac{1}{1-\xi}$ with $l=1, 2$ are given by Lemma \ref{Lemma 3.4}. Then there exist a  positive constants $c$ such that
\begin{equation*}
\begin{split}
&  \left\| \left( \delta^{\alpha}_{\tau}(e^{-z\tau}) -A\right)^{-1} \delta^{l}_{\tau}(e^{-z\tau})    - (z^{\alpha}-A)^{-1} z^{l} \right\| \leq c\tau^{2}|z|^{l+2-\alpha},\\
&  \left\|\left( \delta^{\alpha}_{\tau}(e^{-z\tau}) -A\right)^{-1} \delta^{l}_{\tau}(e^{-z\tau}) \frac{\gamma_{l}(e^{-z\tau})}{l!}  \tau^{l+1}  - (z^{\alpha}-A)^{-1} z^{-1} \right\| \leq c\tau^{2}|z|^{1-\alpha}
\quad \forall z\in \Gamma^{\tau}_{\theta,\kappa},
\end{split}
\end{equation*}
where $\theta\in (\pi/2,\pi)$ is  sufficiently close to $\pi/2$.
\end{lemma}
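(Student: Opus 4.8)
The plan is to obtain both bounds by simple algebraic splittings combined with the resolvent estimates \eqref{fractional resolvent estimate} and \eqref{discrete fractional resolvent estimate}, the resolvent identity \eqref{identity1}, and Lemmas \ref{Lemma 2.3} and \ref{Lemma nn3.3}; this is the same type of bookkeeping already carried out in Lemmas \ref{Lemma 3.11} and \ref{addLemma 3.6}, now performed for the integer power $\delta^l_\tau(e^{-z\tau})$ with $l=1,2$ rather than for $\delta_\tau(e^{-z\tau})$ itself.

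For the first estimate, I would write
\[
(\delta^{\alpha}_{\tau}(e^{-z\tau}) -A)^{-1} \delta^{l}_{\tau}(e^{-z\tau}) - (z^{\alpha}-A)^{-1} z^{l} = K_1 + K_2
\]
with $K_1 = (\delta^{\alpha}_{\tau}(e^{-z\tau}) -A)^{-1}(\delta^{l}_{\tau}(e^{-z\tau}) - z^{l})$ and $K_2 = [(\delta^{\alpha}_{\tau}(e^{-z\tau}) -A)^{-1} - (z^{\alpha}-A)^{-1}] z^{l}$. To bound $K_1$, I would factor $\delta^l_\tau - z^l = (\delta_\tau - z)\sum_{j=0}^{l-1}\delta^j_\tau z^{l-1-j}$ and use Lemma \ref{Lemma 2.3}, which gives $|\delta_\tau(e^{-z\tau})-z|\le c\tau^2|z|^3$ and $|\delta_\tau(e^{-z\tau})|\le c|z|$, hence $|\delta^l_\tau(e^{-z\tau})-z^l|\le c\tau^2|z|^{l+2}$; together with \eqref{discrete fractional resolvent estimate} this yields $\|K_1\|\le c\tau^2|z|^{l+2-\alpha}$. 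For $K_2$, the identity \eqref{identity1}, Lemma \ref{Lemma 2.3} (using $|z^\alpha-\delta^\alpha_\tau(e^{-z\tau})|\le c\tau^2|z|^{2+\alpha}$), \eqref{fractional resolvent estimate} and \eqref{discrete fractional resolvent estimate} give $\|K_2\|\le c\tau^2|z|^{2+\alpha}\cdot|z|^{-\alpha}\cdot|z|^{-\alpha}\cdot|z|^{l}=c\tau^2|z|^{l+2-\alpha}$. The triangle inequality then proves the first estimate.

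For the second estimate, I would exploit $z^l z^{-l-1}=z^{-1}$ and split
\[
(\delta^{\alpha}_{\tau}(e^{-z\tau}) -A)^{-1} \delta^{l}_{\tau}(e^{-z\tau}) \frac{\gamma_{l}(e^{-z\tau})}{l!}\tau^{l+1} - (z^{\alpha}-A)^{-1} z^{-1} = L_1 + L_2,
\]
where $L_1 = (\delta^{\alpha}_{\tau}(e^{-z\tau}) -A)^{-1}\delta^{l}_{\tau}(e^{-z\tau})\big(\frac{\gamma_{l}(e^{-z\tau})}{l!}\tau^{l+1} - z^{-l-1}\big)$ and $L_2 = [(\delta^{\alpha}_{\tau}(e^{-z\tau}) -A)^{-1}\delta^l_\tau(e^{-z\tau}) - (z^{\alpha}-A)^{-1} z^{l}]\,z^{-l-1}$. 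For $L_1$ I would combine \eqref{discrete fractional resolvent estimate}, $|\delta^l_\tau(e^{-z\tau})|\le c|z|^l$, and Lemma \ref{Lemma nn3.3} (giving $|\frac{\gamma_{l}(e^{-z\tau})}{l!}\tau^{l+1}-z^{-l-1}|\le c\tau^{l+1}$) to obtain $\|L_1\|\le c\tau^{l+1}|z|^{l-\alpha}$, which is already $c\tau^2|z|^{1-\alpha}$ for $l=1$ and, for $l=2$, becomes $c\tau^2|z|^{1-\alpha}$ after absorbing one factor $\tau|z|\le\pi/\sin\theta$ (valid on $\Gamma^\tau_{\theta,\kappa}$). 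For $L_2$, the first estimate — just established — bounds the bracket by $c\tau^2|z|^{l+2-\alpha}$, so multiplying by $|z|^{-l-1}$ gives $\|L_2\|\le c\tau^2|z|^{1-\alpha}$. A final triangle inequality completes the proof.

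The argument has no genuine obstacle; the only step requiring a little care is the $l=2$ case of $L_1$, where the crude bound carries a spurious power of $\tau$ that must be traded for a power of $|z|$ — legitimate precisely because $z$ ranges over the truncated contour $\Gamma^\tau_{\theta,\kappa}$, on which $\tau|z|$ is bounded. (Alternatively one could keep track of the dependence on $l$ throughout, in the style of Lemma \ref{Lemma 3.4}, but the power-trading shortcut is shorter.)
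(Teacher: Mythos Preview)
Your proposal is correct and follows essentially the same approach as the paper: the splittings $K_1,K_2$ and $L_1,L_2$ coincide exactly with the paper's $I,II$ and $J_1,J_2$, and the bounds are obtained from the same ingredients (Lemmas \ref{Lemma 2.3}, \ref{Lemma nn3.3}, \eqref{discrete fractional resolvent estimate}, \eqref{identity1}). You are in fact slightly more explicit than the paper about factoring $\delta_\tau^l-z^l$ and about the $\tau|z|\le c$ trade-off in the $l=2$ case of $L_1$, which the paper leaves implicit.
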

\begin{proof}
First we consider
\begin{equation*}
 \left( \delta^{\alpha}_{\tau}(e^{-z\tau}) -A\right)^{-1} \delta^{l}_{\tau}(e^{-z\tau})    - (z^{\alpha}-A)^{-1} z^{l} = I + II
\end{equation*}
with
\begin{equation*}
\begin{split}
I  & = \left( \delta^{\alpha}_{\tau}(e^{-z\tau}) -A\right)^{-1} \left( \delta^{l}_{\tau}(e^{-z\tau})   -  z^{l} \right),  \\
II & = \left( \left( \delta^{\alpha}_{\tau}(e^{-z\tau}) -A\right)^{-1} - (z^{\alpha}-A)^{-1} \right) z^{l}.
\end{split}
\end{equation*}
According to \eqref{discrete fractional resolvent estimate} and Lemma \ref{Lemma 2.3}, we obtain
\begin{equation*}
\|I\| \leq  c \tau^{2} |z|^{l+2-\alpha}.
\end{equation*}
Using the Lemma \ref{Lemma 2.3}, \eqref{discrete fractional resolvent estimate}, \eqref{fractional resolvent estimate} and the identity
$$\left( \delta^{\alpha}_{\tau}(e^{-z\tau}) -A\right)^{-1} - (z^{\alpha}-A)^{-1}=\left( z^{\alpha} - \delta^{\alpha}_{\tau}(e^{-z\tau}) \right) \left( \delta^{\alpha}_{\tau}(e^{-z\tau}) -A\right)^{-1} (z^{\alpha}-A)^{-1},$$
we estimate $II$ as following
\begin{equation*}
\|II\|  \leq c \tau^{2} |z|^{2+\alpha}  c |z|^{-\alpha} c |z|^{-\alpha}  |z|^{l} \leq c \tau^{2} |z|^{l+2-\alpha}.
\end{equation*}
According to the triangle inequality,  the desired result is obtained.

Next we consider
\begin{equation*}
\left( \delta^{\alpha}_{\tau}(e^{-z\tau}) -A\right)^{-1} \delta^{l}_{\tau}(e^{-z\tau})  \frac{\gamma_{l}(e^{-z\tau})}{l!}  \tau^{l+1}  - (z^{\alpha}-A)^{-1} z^{-1} = J_{1} + J_{2}
\end{equation*}
with
\begin{equation*}
\begin{split}
J_{1}  & = \left( \delta^{\alpha}_{\tau}(e^{-z\tau}) -A\right)^{-1}  \delta^{l}_{\tau}(e^{-z\tau}) \left[ \frac{\gamma_{l}(e^{-z\tau})}{l!}  \tau^{l+1}   -  z^{-l-1} \right],  \\
J_{2}  & = \left[ \left( \delta^{\alpha}_{\tau}(e^{-z\tau}) -A\right)^{-1} \delta^{l}_{\tau}(e^{-z\tau})   - (z^{\alpha}-A)^{-1} z^{l} \right]z^{-l-1}.
\end{split}
\end{equation*}
According to \eqref{discrete fractional resolvent estimate} and Lemmas \ref{Lemma 2.3}, \ref{Lemma nn3.3} with $l=1, 2$, we obtain
\begin{equation*}
\|J_{1}\|  \leq  c \tau^{l+1} |z|^{l-\alpha} \leq  c \tau^{2} |z|^{1-\alpha} .
\end{equation*}
From  $I$ and $II$, we have
\begin{equation*}
\|J_{2}\|  \leq  c \tau^{2} |z|^{l+2-\alpha} |z|^{-l-1} = c \tau^{2} |z|^{1-\alpha}.
\end{equation*}
According to the triangle inequality,  the desired result is obtained.
\end{proof}

\begin{lemma}\label{addLemma4.5}
Let $\widehat{G}(z) = \frac{1}{\mu+1} \frac{\Gamma(\mu+2)}{z^{\mu+2}} q$ and  $\widetilde{G}(e^{-z\tau}) =q \frac{\tau^{\mu+1}}{\mu+1} Li_{-\mu-1}(e^{-z\tau})$. Then
\begin{equation*}
\left\|\tau \widetilde{G}(e^{-z\tau}) - \widehat{G}(z) \right\| \leq c \tau^{\mu+2}\left\| q \right\|,~~\mu \notin \mathbb{N}.
\end{equation*}
\end{lemma}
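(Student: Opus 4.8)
The plan is to insert the expansion from Lemma~\ref{addLemma:LipCA} into $\tau\widetilde G(e^{-z\tau})$ and observe that its leading (singular) term reproduces $\widehat G(z)$ \emph{exactly}, so that only an absolutely convergent power series in $z\tau$ survives; this residual series is then bounded uniformly on $\Gamma^{\tau}_{\theta,\kappa}$, which gives the $O(\tau^{\mu+2})$ bound.

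First I would put both objects on a common footing. Using $\Gamma(\mu+2)=(\mu+1)\Gamma(\mu+1)$ we have $\widehat G(z)=\tfrac{\Gamma(\mu+2)}{\mu+1}\,z^{-\mu-2}q$, and from the stated form of $\widetilde G$, $\tau\widetilde G(e^{-z\tau})=\tfrac{\tau^{\mu+2}}{\mu+1}\,Li_{-\mu-1}(e^{-z\tau})\,q$. The hypothesis $\mu\notin\mathbb N$ (and, in this subsection, $0<\mu<1$, so that $p:=-\mu-1\in(-2,-1)$) guarantees $p\neq 1,2,\dots$, so Lemma~\ref{addLemma:LipCA} applies on $|z\tau|\le\pi/\sin\theta$ and, by \eqref{LpSE},
\begin{equation*}
Li_{-\mu-1}(e^{-z\tau})=\Gamma(\mu+2)(z\tau)^{-\mu-2}+\sum_{j=0}^{\infty}(-1)^{j}\zeta(-\mu-1-j)\frac{(z\tau)^{j}}{j!}.
\end{equation*}
Multiplying by $\tfrac{\tau^{\mu+2}}{\mu+1}q$, the first term becomes $\tfrac{\Gamma(\mu+2)}{\mu+1}\tau^{\mu+2}(z\tau)^{-\mu-2}q=\tfrac{\Gamma(\mu+2)}{\mu+1}z^{-\mu-2}q=\widehat G(z)$, which cancels, leaving
\begin{equation*}
\tau\widetilde G(e^{-z\tau})-\widehat G(z)=\frac{\tau^{\mu+2}}{\mu+1}\Big(\sum_{j=0}^{\infty}(-1)^{j}\zeta(-\mu-1-j)\frac{(z\tau)^{j}}{j!}\Big)q.
\end{equation*}

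It then remains to bound the bracketed series uniformly for $z\in\Gamma^{\tau}_{\theta,\kappa}$. On this contour $|z\tau|\le\pi/\sin\theta$: on the rays $z=re^{\pm i\theta}$ the restriction $|\Im z|\le\pi/\tau$ forces $r\tau\le\pi/\sin\theta$, and on the arc $|z|=\kappa$ one has $|z\tau|=\kappa\tau\le\pi/\sin\theta$ for the admissible choices of $\kappa$. Since Lemma~\ref{addLemma:LipCA} asserts \emph{absolute} convergence of the full series at points with $|z\tau|\le\pi/\sin\theta$, the constant $c_{\mu,\theta}:=\sum_{j\ge0}|\zeta(-\mu-1-j)|\,(\pi/\sin\theta)^{j}/j!$ is finite, so $\big\|\sum_{j\ge0}(-1)^{j}\zeta(-\mu-1-j)(z\tau)^{j}/j!\big\|\le c_{\mu,\theta}$ for all such $z$. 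Combining this with the displayed identity yields $\|\tau\widetilde G(e^{-z\tau})-\widehat G(z)\|\le\frac{c_{\mu,\theta}}{|\mu+1|}\tau^{\mu+2}\|q\|=c\,\tau^{\mu+2}\|q\|$, as claimed; note that $q$ simply factors out, so no regularity beyond $q\in L^{2}(\Omega)$ is needed.

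The only subtle point is the exact cancellation of the singular term $\Gamma(\mu+2)(z\tau)^{-\mu-2}$ against $\widehat G(z)$: this is precisely what produces the clean rate $O(\tau^{\mu+2})$ rather than a lower-order remainder, and it hinges on the normalization coming from $G=J^{1}g$. Everything after that is term-by-term domination, justified by the absolute convergence in Lemma~\ref{addLemma:LipCA} (equivalently, the regular part of $Li_{-\mu-1}(e^{-w})$ has radius of convergence $2\pi>\pi/\sin\theta$ once $\theta$ is taken close enough to $\pi/2$).
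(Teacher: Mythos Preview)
Your proof is correct and follows essentially the same approach as the paper: both apply Lemma~\ref{addLemma:LipCA} with $p=-\mu-1$, identify the singular term $\Gamma(\mu+2)(z\tau)^{-\mu-2}$ as exactly $\widehat G(z)$ after scaling, and bound the remaining absolutely convergent series by a constant depending only on $\mu$ and $\theta$. You simply spell out in more detail why the residual series is uniformly bounded on $\Gamma^{\tau}_{\theta,\kappa}$ via $|z\tau|\le\pi/\sin\theta$, which the paper leaves implicit.
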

\begin{proof}
Using  the definitions of $\widehat{G}(z)$ and $\widetilde{G}(e^{-z\tau})$ and Lemma \ref{addLemma:LipCA} with $p=-\mu-1$, we have
\begin{equation*}
\begin{split}
\left\|\tau \widetilde{G}(e^{-z\tau}) - \widehat{G}(z) \right\|
= &  \left\| \frac{\tau^{\mu+2}}{(\mu+1)} \left( Li_{-\mu-1}(e^{-z\tau})  -  \frac{\Gamma(\mu+2)}{(z\tau)^{\mu+2}} \right) q \right\| \\
\leq  & \frac{\tau^{\mu+2}}{(\mu+1)} \left| \sum_{j=0}^{\infty} (-1)^{j} \zeta(-\mu-1-j)\frac{(z\tau)^{j}}{j!} \right| \left\| q \right\| \leq  c \tau^{\mu+2} \left\| q \right\|.
\end{split}
\end{equation*}
The proof is completed.
\end{proof}

\begin{theorem}[ID1-BDF2]\label{addtheorema4.1}
Let $V(t_{n})$ and $V^{n}$ be the solutions of  \eqref{rrfee} and \eqref{2.3}, respectively. Let $v\in L^{2}(\Omega)$ and $g(x,t)=t^{\mu}q(x) $, $\mu >0$, $q(x) \in L^{2}(\Omega)$.  Then
\begin{equation*}
\left\|V^{n}-V(t_{n})\right\| \leq c \tau^{2}  t^{-2}_{n}\| v \| + c\tau^{\mu + 2} t^{\alpha-2}_{n}\| q \|  + c \tau^{2} t_{n}^{\alpha+\mu-2}\left\| q \right\| .
\end{equation*}
\end{theorem}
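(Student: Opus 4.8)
The plan is to subtract the resolvent representation \eqref{add4.1} of $V(t_n)$ from the discrete representation \eqref{add4.4} of $V^{n}$, and to group the difference into a contribution of the initial data $v$ and a contribution of the regularized source $G(t)=\frac{t^{\mu+1}}{\mu+1}q$, whose Laplace transform is $\widehat{G}(z)=\frac{\Gamma(\mu+1)}{z^{\mu+2}}q$. Because $V(t_n)$ is an integral over the full contour $\Gamma_{\theta,\kappa}$ while $V^{n}$ is an integral over the truncated contour $\Gamma^{\tau}_{\theta,\kappa}$, I will split $\int_{\Gamma_{\theta,\kappa}}=\int_{\Gamma^{\tau}_{\theta,\kappa}}+\int_{\Gamma_{\theta,\kappa}\backslash\Gamma^{\tau}_{\theta,\kappa}}$ everywhere, and take $\kappa=t_n^{-1}$, so that the scaled bound $\int_{\Gamma^{\tau}_{\theta,\kappa}}|e^{zt_n}|\,|z|^{\beta}\,|dz|\le c\,t_n^{-\beta-1}$ (obtained, as in \eqref{ad3.3.09}, by the substitution $s=rt_n$ on the two rays and by $|z|=\kappa$ on the arc) is at hand; here I only need $\kappa\tau=1/n\le 1$, which also keeps $\Gamma^{\tau}_{\theta,\kappa}$ inside the region $|z\tau|\le\pi/\sin\theta$ where Lemmas \ref{addLemma:LipCA} and \ref{addLemma4.5} apply.

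The initial-data part is identical to the terms $I_1$ and $I_2$ already treated in the proof of Theorem \ref{addtheorema3.1}: on $\Gamma^{\tau}_{\theta,\kappa}$ the integrand is $\big[(\delta^{\alpha}_{\tau}(e^{-z\tau})-A)^{-1}\delta_{\tau}(e^{-z\tau})\gamma_{1}(e^{-z\tau})\tau^{2}-(z^{\alpha}-A)^{-1}z^{-1}\big]Av$, which Lemma \ref{addLemma 3.6} bounds by $c\tau^{2}|z|\,\|v\|$, and on $\Gamma_{\theta,\kappa}\backslash\Gamma^{\tau}_{\theta,\kappa}$ the tail $(z^{\alpha}-A)^{-1}z^{-1}Av$ is bounded by $c|z|^{-1}\|v\|$ and then by $c\tau^{2}|z|\,\|v\|$ using $1\le(\sin\theta/\pi)^{2}\tau^{2}|z|^{2}$. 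Integrating both over the respective contours gives the bound $c\tau^{2}t_n^{-2}\|v\|$.

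For the source part I would write it as $J_1+J_2+J_3$ with
\begin{equation*}
J_1=\frac{1}{2\pi i}\int_{\Gamma^{\tau}_{\theta,\kappa}}e^{zt_n}(\delta^{\alpha}_{\tau}(e^{-z\tau})-A)^{-1}\delta_{\tau}(e^{-z\tau})\big[\tau\widetilde{G}(e^{-z\tau})-\widehat{G}(z)\big]\,dz,
\end{equation*}
\begin{equation*}
J_2=\frac{1}{2\pi i}\int_{\Gamma^{\tau}_{\theta,\kappa}}e^{zt_n}\big[(\delta^{\alpha}_{\tau}(e^{-z\tau})-A)^{-1}\delta_{\tau}(e^{-z\tau})-(z^{\alpha}-A)^{-1}z\big]\widehat{G}(z)\,dz,
\end{equation*}
\begin{equation*}
J_3=-\frac{1}{2\pi i}\int_{\Gamma_{\theta,\kappa}\backslash\Gamma^{\tau}_{\theta,\kappa}}e^{zt_n}(z^{\alpha}-A)^{-1}z\,\widehat{G}(z)\,dz.
\end{equation*}
For $J_1$, I combine $\|(\delta^{\alpha}_{\tau}(e^{-z\tau})-A)^{-1}\|\le c|z|^{-\alpha}$ and $|\delta_{\tau}(e^{-z\tau})|\le c|z|$ from Lemma \ref{Lemma 2.3} and \eqref{discrete fractional resolvent estimate} with Lemma \ref{addLemma4.5}, $\|\tau\widetilde{G}(e^{-z\tau})-\widehat{G}(z)\|\le c\tau^{\mu+2}\|q\|$, to obtain an integrand $\le c\tau^{\mu+2}|z|^{1-\alpha}\|q\|$, hence $\|J_1\|\le c\tau^{\mu+2}t_n^{\alpha-2}\|q\|$. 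For $J_2$, I use Lemma \ref{addLemma 4.3} with $l=1$, namely $\|(\delta^{\alpha}_{\tau}(e^{-z\tau})-A)^{-1}\delta_{\tau}(e^{-z\tau})-(z^{\alpha}-A)^{-1}z\|\le c\tau^{2}|z|^{3-\alpha}$, together with $\|\widehat{G}(z)\|\le c|z|^{-\mu-2}\|q\|$, to obtain an integrand $\le c\tau^{2}|z|^{1-\alpha-\mu}\|q\|$, hence $\|J_2\|\le c\tau^{2}t_n^{\alpha+\mu-2}\|q\|$. For the tail $J_3$, the resolvent estimate \eqref{fractional resolvent estimate} gives $\|(z^{\alpha}-A)^{-1}z\widehat{G}(z)\|\le c|z|^{-1-\alpha-\mu}\|q\|$; since $|z|\tau\ge\pi/\sin\theta$ there, the factor $1\le(\sin\theta/\pi)^{2}\tau^{2}|z|^{2}$ upgrades this to $c\tau^{2}|z|^{1-\alpha-\mu}\|q\|$, and the corresponding tail integral is again bounded by $c\tau^{2}t_n^{\alpha+\mu-2}\|q\|$. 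Adding the three source bounds to the initial-data bound yields the claim.

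The calculations are mostly routine once the right splitting is in place, the backbone being the single contour estimate $\int|e^{zt_n}||z|^{\beta}|dz|\le ct_n^{-\beta-1}$ applied with $\kappa=t_n^{-1}$. The points I would be most careful about are: (i) that the polylogarithm expansion of $\widetilde{G}$ behind Lemma \ref{addLemma4.5} is valid only on $|z\tau|\le\pi/\sin\theta$, which is exactly why the discrete contour must be the truncated $\Gamma^{\tau}_{\theta,\kappa}$ and why $\kappa=t_n^{-1}$ is admissible; (ii) that Lemma \ref{addLemma4.5} requires $\mu\notin\mathbb{N}$, while for integer $\mu$ the source $t^{\mu}q$ is smooth in time and the estimate follows directly from Theorem \ref{addtheorema3.1}; and (iii) that no spurious logarithmic factors appear in the $t_n$-scaled integrals — but since $\kappa t_n=1$, the lower integration limits are bounded away from $0$ for every admissible $\mu$, so these integrals stay uniformly bounded.
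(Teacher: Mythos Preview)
Your proof is correct and follows essentially the same route as the paper: the paper writes the difference as $I_{1}-I_{2}+I_{3}-I_{4}$ with $I_{1},I_{2}$ being your initial-data terms and $I_{3}-I_{4}$ your source terms, then splits $I_{3}=I_{31}+I_{32}$ exactly as your $J_{1}$ and $J_{2}$, while $-I_{4}$ is your $J_{3}$; the lemmas invoked (Lemmas \ref{addLemma 3.6}, \ref{addLemma 4.3}, \ref{addLemma4.5}) and the resulting bounds are identical. Your careful remarks on the integer case $\mu\in\mathbb{N}$ and on the validity region of Lemma \ref{addLemma4.5} match the paper's handling as well.
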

\begin{proof}
From Theorem \ref{addtheorema3.1}, the desired results is obtained with $\mu \in \mathbb{N}$. We next prove the case $\mu \notin \mathbb{N}$.
Subtracting \eqref{add4.1} from \eqref{add4.4}, we obtain
\begin{equation*}
V^{n}-V(t_{n})=I_{1}-I_{2} + I_{3} - I_{4}
\end{equation*}
with
\begin{equation*}
\begin{split}
I_{1} =& \frac{1}{2\pi i} \int_{\Gamma^{\tau}_{\theta,\kappa}} e^{zt_{n}}  \left[ \left( \delta^{\alpha}_{\tau}(e^{-z\tau}) -A\right)^{-1} \delta_{\tau}(e^{-z\tau})  \gamma_{1}(e^{-z\tau}) \tau^{2}  - (z^{\alpha}-A)^{-1} z^{-1}\right] Av dz, \\
I_{2} =& \frac{1}{2\pi i} \int_{\Gamma_{\theta,\kappa}\backslash \Gamma^{\tau}_{\theta,\kappa}} e^{zt_{n}}  (z^{\alpha}-A)^{-1} z^{-1}  Av dz,\\
I_{3} =& \frac{1}{2\pi i}\int_{\Gamma^{\tau}_{\theta,\kappa}} e^{zt_{n}} \left[\left( \delta^{\alpha}_{\tau}(e^{-z\tau}) -A\right)^{-1} \delta_{\tau}(e^{-z\tau}) \tau \widetilde{G} (e^{-z\tau})
-   (z^{\alpha}-A)^{-1} z \widehat{G}(z)\right] dz, \\
I_{4} =& \frac{1}{2\pi i} \int_{\Gamma_{\theta,\kappa}\backslash \Gamma^{\tau}_{\theta,\kappa}} e^{zt_n}  (z^{\alpha}-A)^{-1} z  \widehat{G}(z) dz.
\end{split}
\end{equation*}
 According to  \eqref{add3.161} and \eqref{add3.162}, we estimate  $I_{1}$ and $I_{2}$ as following
\begin{equation*}
\left\|I_{1}\right\| \leq c\tau^{2} t_{n}^{-2} \left\| v \right\|
\quad {\rm and} \quad
\left\|I_{2}\right\| \leq c\tau^{2} t_{n}^{-2} \left\| v \right\|.
\end{equation*}
From \eqref{add3.16}, we estimate that $I_{4}$ is similar to $I_{2}$ as following
\begin{equation*}
\begin{split}
\left\|I_{4}\right\|
&\leq c\int_{\Gamma_{\theta,\kappa}\backslash \Gamma^{\tau}_{\theta,\kappa}} {\left|e^{zt_{n}} \right||z|^{-\alpha}\left\|z \widehat{G}(z)\right\| } |dz| \\
&\leq c\int_{\Gamma_{\theta,\kappa}\backslash \Gamma^{\tau}_{\theta,\kappa}} {\left|e^{zt_{n}}\right||z|^{-\alpha} |z|^{-\mu-1} } \left\| q \right\|  |dz|
\leq c\tau^{2} t^{\alpha+\mu-2}_{n}  \left\| q \right\|.
\end{split}
\end{equation*}
Finally we consider $I_{3}= I_{31} + I_{32}$
with
\begin{equation*}
\begin{split}
I_{31} =& \frac{1}{2\pi i}\int_{\Gamma^{\tau}_{\theta,\kappa}} e^{zt_{n}} \left( \delta^{\alpha}_{\tau}(e^{-z\tau}) -A\right)^{-1} \delta_{\tau}(e^{-z\tau})  \left(\tau \widetilde{G} (e^{-z\tau})
-   \widehat{G}(z)\right) dz,\\
I_{32} =& \frac{1}{2\pi i} \int_{\Gamma^{\tau}_{\theta,\kappa}} e^{zt_n}  \left(\left( \delta^{\alpha}_{\tau}(e^{-z\tau}) -A\right)^{-1} \delta_{\tau}(e^{-z\tau})  - (z^{\alpha}-A)^{-1} z \right) \widehat{G}(z) dz.
\end{split}
\end{equation*}
According to \eqref{discrete fractional resolvent estimate} and Lemmas \ref{Lemma 2.3} and \ref{addLemma4.5}, there exists
\begin{equation*}
\left\|I_{31}\right\|
\leq c \tau^{\mu+2}  \left\| q \right\| \int_{\Gamma^{\tau}_{\theta,\kappa}} \left|e^{zt_{n}}\right| |z|^{1-\alpha} |dz|
\leq c \tau^{\mu+2}  t_{n}^{\alpha-2} \left\| q \right\|.
\end{equation*}
From  Lemma \ref{addLemma 4.3} and $\widehat{G}(z) = \frac{1}{\mu+1} \frac{\Gamma(\mu+2)}{z^{\mu+2}} q$, we estimate $I_{32}$ as following
\begin{equation*}
\begin{split}
\left\|I_{32}\right\|
\leq &  c \tau^{2} \left\| q \right\| \int_{\Gamma^{\tau}_{\theta,\kappa}} \left|e^{zt_{n}}\right| |z|^{3-\alpha} |z|^{-\mu-2}  |dz|
\leq    c \tau^{2}  t_{n}^{\alpha+\mu-2}\left\| q \right\|.
\end{split}
\end{equation*}
By the triangle inequality,  the desired result is obtained.
\end{proof}

\subsection{Singular source term}
In this subsection, we consider the singular source term $g(x,t)=t^{\mu}q(x)$ with $ \mu\geq -\alpha$ for subdiffusion \eqref{rrrfee}.

Let $\mathcal{G}(t)=J^{2}g(t) = \frac{t^{\mu+2}}{(\mu+1)(\mu+2)}q$. Using  $\widehat{\mathcal{G}}(z) =  \frac{\Gamma(\mu+1)}{z^{\mu+3}} q$
and  \eqref{LT2}, we have 
\begin{equation}\label{add4.2}
\begin{split}
V(t)
&=  \frac{1}{2\pi i} \int_{\Gamma_{\theta, \kappa}} e^{zt}  (z^{\alpha}-A)^{-1}\left( z^{-1}Av   +  \frac{\Gamma(\mu+1)}{z^{\mu+1}} q \right) dz.
\end{split}
\end{equation}
From  \eqref{DLT2}, it yields
\begin{equation*}
V^{n}=\frac{\tau}{2\pi i}\int_{\Gamma^{\tau}_{\theta,\kappa}} e^{zt_n} \left( \delta^{\alpha}_{\tau}(e^{-z\tau}) -A\right)^{-1} \delta^{2}_{\tau}(e^{-z\tau}) \left( \frac{\gamma_{2}(e^{-z\tau})}{2} \tau^{2} Av + \widetilde{\mathcal{G}} (e^{-z\tau}) \right)dz
\end{equation*}
with $\frac{\gamma_{2}(e^{-z\tau})}{2}= \frac{e^{-z\tau}+e^{-2z\tau}}{2(1-e^{-z\tau})^3}$ and $\Gamma^{\tau}_{\theta, \kappa}=\{z\in \Gamma_{\theta, \kappa}: |\Im z|\leq \pi / \tau\}$.
Here
\begin{equation*}
\widetilde{\mathcal{G}}(\xi)
= \sum^{\infty}_{n=1} \mathcal{G}^{n} \xi^{n}
= q \frac{\tau^{\mu+2}}{(\mu+2)(\mu+1)} \sum^{\infty}_{n=1} \frac{\xi^{n}}{n^{-\mu-2}} = q \frac{\tau^{\mu+2}}{(\mu+2)(\mu+1)} Li_{-\mu-2}(\xi).
\end{equation*}

\begin{lemma}\label{Lemma4.5}
Let $\widehat{\mathcal{G}}(z) = q \frac{\Gamma(\mu+1)}{z^{\mu+3}} $ and  $\widetilde{\mathcal{G}}(e^{-z\tau}) = q \frac{\tau^{\mu+2}}{(\mu+2)(\mu+1)} Li_{-\mu-2}(e^{-z\tau})$. Then
\begin{equation*}
\left\| \tau \widetilde{\mathcal{G}}(e^{-z\tau}) - \widehat{\mathcal{G}}(z) \right\| \leq c \tau^{\mu+3} \left\| q \right\|,~~\mu \notin \mathbb{N}.
\end{equation*}
\end{lemma}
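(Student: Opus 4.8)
The plan is to mimic the proof of Lemma \ref{addLemma4.5} almost verbatim, replacing the index $p=-\mu-1$ by $p=-\mu-2$ and tracking how the power of $\tau$ changes. First I would write out the definitions: since $\mathcal{G}(t)=J^{2}g(t)=\frac{t^{\mu+2}}{(\mu+1)(\mu+2)}q$, its Laplace transform is $\widehat{\mathcal{G}}(z)=\frac{\Gamma(\mu+3)}{(\mu+1)(\mu+2)\,z^{\mu+3}}q=\frac{\Gamma(\mu+1)}{z^{\mu+3}}q$, and its generating function is $\widetilde{\mathcal{G}}(e^{-z\tau})=q\frac{\tau^{\mu+2}}{(\mu+1)(\mu+2)}Li_{-\mu-2}(e^{-z\tau})$, exactly as recorded just above the statement. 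Then I would factor out the common prefactor $\frac{\tau^{\mu+3}}{(\mu+1)(\mu+2)}q$ to obtain
\begin{equation*}
\tau\widetilde{\mathcal{G}}(e^{-z\tau})-\widehat{\mathcal{G}}(z)
=\frac{\tau^{\mu+3}}{(\mu+1)(\mu+2)}\left(Li_{-\mu-2}(e^{-z\tau})-\frac{\Gamma(\mu+3)}{(z\tau)^{\mu+3}}\right)q,
\end{equation*}
where I have used $\frac{\Gamma(\mu+1)}{z^{\mu+3}}=\frac{\Gamma(\mu+3)}{(\mu+1)(\mu+2)}\cdot\frac{1}{z^{\mu+3}}$ and pulled one factor of $\tau^{\mu+3}$ out of $(z\tau)^{-(\mu+3)}$.

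Next I would invoke Lemma \ref{addLemma:LipCA} with $p=-\mu-2$ (legitimate since $\mu\notin\mathbb{N}$ forces $p\notin\{1,2,\dots\}$, and we are on the contour where $|z\tau|\le \pi/\sin\theta$). The expansion reads
\begin{equation*}
Li_{-\mu-2}(e^{-z\tau})=\Gamma(\mu+3)(z\tau)^{-\mu-3}+\sum_{j=0}^{\infty}(-1)^{j}\zeta(-\mu-2-j)\frac{(z\tau)^{j}}{j!},
\end{equation*}
so the singular term $\Gamma(\mu+3)(z\tau)^{-\mu-3}$ cancels exactly against $\frac{\Gamma(\mu+3)}{(z\tau)^{\mu+3}}$, leaving only the absolutely convergent Taylor tail. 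Since $|z\tau|$ is bounded on $\Gamma^{\tau}_{\theta,\kappa}$ and the series converges absolutely there, this remainder is bounded by a constant; hence
\begin{equation*}
\left\|\tau\widetilde{\mathcal{G}}(e^{-z\tau})-\widehat{\mathcal{G}}(z)\right\|
\le\frac{\tau^{\mu+3}}{(\mu+1)(\mu+2)}\left|\sum_{j=0}^{\infty}(-1)^{j}\zeta(-\mu-2-j)\frac{(z\tau)^{j}}{j!}\right|\|q\|\le c\,\tau^{\mu+3}\|q\|,
\end{equation*}
which is the claimed bound.

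There is essentially no real obstacle here: the only things to be careful about are the bookkeeping of Gamma-function factors (making sure $\frac{\Gamma(\mu+3)}{(\mu+1)(\mu+2)}=\Gamma(\mu+1)$ so that the singular terms cancel cleanly) and confirming that $p=-\mu-2$ still satisfies the hypothesis of Lemma \ref{addLemma:LipCA} under $\mu\notin\mathbb{N}$. If one wanted to be fully rigorous about the "$\le c$" step, one would note that on $\Gamma^{\tau}_{\theta,\kappa}$ we have $|z\tau|\le \pi/\sin\theta$, so the tail $\sum_{j\ge0}|\zeta(-\mu-2-j)|\,(z\tau)^j/j!$ is dominated by its value at $|z\tau|=\pi/\sin\theta$, a finite constant depending only on $\mu$ and $\theta$; this is exactly the same reasoning used in Lemma \ref{addLemma4.5}. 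Thus the proof is a one-to-one transcription of the previous lemma with the shift $\mu\mapsto\mu+1$ in the relevant exponent, and I would simply write "The proof is identical to that of Lemma \ref{addLemma4.5}, with $p=-\mu-2$ in place of $p=-\mu-1$" followed by the display above.
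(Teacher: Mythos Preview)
Your proposal is correct and follows exactly the same approach as the paper: factor out $\frac{\tau^{\mu+3}}{(\mu+1)(\mu+2)}q$, apply Lemma~\ref{addLemma:LipCA} with $p=-\mu-2$ so that the singular term $\Gamma(\mu+3)(z\tau)^{-\mu-3}$ cancels, and bound the remaining absolutely convergent series by a constant on $\Gamma^{\tau}_{\theta,\kappa}$. The paper's proof is essentially your display, written in one aligned equation with no additional commentary.
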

\begin{proof}
From Lemma \ref{addLemma:LipCA}, we have
\begin{equation*}
\begin{split}
\left\| \tau \widetilde{\mathcal{G}}(e^{-z\tau}) - \widehat{\mathcal{G}}(z) \right\|
= & \left\| \frac{\tau^{\mu+3}}{(\mu+2)(\mu+1)} \left(Li_{-\mu-2}(e^{-z\tau})  -  \frac{\Gamma(\mu+3)}{(z\tau)^{\mu+3}} \right) q \right\|\\
\leq & \frac{\tau^{\mu+3}}{(\mu+2)(\mu+1)} \left| \sum_{j=0}^{\infty} (-1)^{j} \zeta(-\mu-2-j)\frac{(z\tau)^{j}}{j!} \right| \left\| q \right\| \\
\leq & c \tau^{\mu+3} \left\| q \right\|.
\end{split}
\end{equation*}
The proof is completed.
\end{proof}
\begin{theorem}[ID2-BDF2]\label{theorema4.1}
Let $V(t_{n})$ and $V^{n}$ be the solutions of \eqref{rrrfee} and \eqref{4.1}, respectively. Let $v\in L^{2}(\Omega)$ and  $g(x,t)=t^{\mu} q(x)$, $\mu \geq -\alpha$,  $q(x) \in L^{2}(\Omega)$.  Then
\begin{equation*}
\left\|V^{n}-V(t_{n})\right\| \leq c \tau^{2}  t_{n}^{-2} \| v \| + c\tau^{\mu+3}  t_{n}^{\alpha-3} \left\| q \right\|  + c \tau^{2} t_{n}^{\alpha+\mu-2}\left\| q \right\| .
\end{equation*}
\end{theorem}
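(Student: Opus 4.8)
The plan is to mirror the proof of Theorem~\ref{addtheorema4.1}, replacing the single integral $G=J^{1}g$ by the double integral $\mathcal{G}=J^{2}g$. First I would dispose of the integer exponents: if $\mu\in\mathbb N$ then $g(x,t)=t^{\mu}q(x)$ is a polynomial in $t$, hence $g\in C^{1}([0,T];L^{2}(\Omega))$ with $\int_{0}^{t}(t-s)^{\alpha-1}\|g''(s)\|\,ds<\infty$, and Theorem~\ref{addtheorema3.2} applies directly; since $\|g(0)\|$, $\|g'(0)\|$ and $\|g''(s)\|$ are bounded by $c\|q\|$ (times a nonnegative power of $s$ in the last case), so that the last term of Theorem~\ref{addtheorema3.2} reproduces a multiple of $t_{n}^{\alpha+\mu-2}$ after convolution against $(t_{n}-s)^{\alpha-1}$, and since $\tau\le t_{n}$ gives $\tau^{\mu+3}t_{n}^{\alpha-3}\le\tau^{2}t_{n}^{\alpha+\mu-2}$ for $\mu\ge 0$, the resulting bound is dominated by the asserted one. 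So I would henceforth assume $\mu\notin\mathbb N$, which makes Lemma~\ref{addLemma:LipCA} and Lemma~\ref{Lemma4.5} available with $p=-\mu-2$.

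Next I would subtract the continuous representation~\eqref{add4.2} from the discrete one~\eqref{DLT2}, using $\mathcal{G}(t)=\frac{t^{\mu+2}}{(\mu+1)(\mu+2)}q$, $\widehat{\mathcal{G}}(z)=\frac{\Gamma(\mu+1)}{z^{\mu+3}}q$ and $\widetilde{\mathcal{G}}(e^{-z\tau})=\frac{\tau^{\mu+2}}{(\mu+1)(\mu+2)}Li_{-\mu-2}(e^{-z\tau})\,q$, and split
\[
V^{n}-V(t_{n})=I_{1}-I_{2}+I_{3}-I_{4},
\]
where $I_{1},I_{3}$ are the integrals over $\Gamma^{\tau}_{\theta,\kappa}$ carrying respectively the $Av$-term and the $q$-term, $I_{2},I_{4}$ are the tail integrals of the continuous solution over $\Gamma_{\theta,\kappa}\setminus\Gamma^{\tau}_{\theta,\kappa}$, and $\kappa=t_{n}^{-1}$. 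The data contribution is handled exactly as in Theorem~\ref{addtheorema3.1} and Theorem~\ref{addtheorema4.1}: the bound $\|I_{1}\|\le c\tau^{2}t_{n}^{-2}\|v\|$ follows from the $A$-absorbing estimate
\[
\left\|\left(\delta^{\alpha}_{\tau}(e^{-z\tau})-A\right)^{-1}\delta^{2}_{\tau}(e^{-z\tau})\tfrac{\gamma_{2}(e^{-z\tau})}{2}\tau^{3}A-(z^{\alpha}-A)^{-1}z^{-1}A\right\|\le c\tau^{2}|z|\qquad\forall z\in\Gamma^{\tau}_{\theta,\kappa},
\]
the $\delta^{2}_{\tau}\gamma_{2}$ analogue of Lemma~\ref{addLemma 3.6}, which I would prove by the same four-term splitting using~\eqref{discrete fractional resolvent estimate} together with Lemmas~\ref{Lemma 2.3}, \ref{Lemma nn3.3}, \ref{Lemma 3.4} and~\ref{addLemma 4.3}; and $\|I_{2}\|\le c\tau^{2}t_{n}^{-2}\|v\|$ follows from~\eqref{fractional resolvent estimate} and~\eqref{add3.16}.

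For the source terms I would use $\|\widehat{\mathcal{G}}(z)\|\le c|z|^{-\mu-3}\|q\|$. In $I_{4}$ the integrand is then $\le c|z|^{-\alpha-\mu-1}\|q\|$, and since $1\le(\tfrac{\sin\theta}{\pi})^{2}\tau^{2}r^{2}$ on $\Gamma_{\theta,\kappa}\setminus\Gamma^{\tau}_{\theta,\kappa}$ this is $\le c\tau^{2}|z|^{1-\alpha-\mu}\|q\|$, which after integration gives $\|I_{4}\|\le c\tau^{2}t_{n}^{\alpha+\mu-2}\|q\|$. Then I would split $I_{3}=I_{31}+I_{32}$, with $I_{31}$ carrying $(\delta^{\alpha}_{\tau}(e^{-z\tau})-A)^{-1}\delta^{2}_{\tau}(e^{-z\tau})\bigl(\tau\widetilde{\mathcal{G}}(e^{-z\tau})-\widehat{\mathcal{G}}(z)\bigr)$ and $I_{32}$ carrying $\bigl((\delta^{\alpha}_{\tau}(e^{-z\tau})-A)^{-1}\delta^{2}_{\tau}(e^{-z\tau})-(z^{\alpha}-A)^{-1}z^{2}\bigr)\widehat{\mathcal{G}}(z)$. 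For $I_{31}$, combining~\eqref{discrete fractional resolvent estimate}, $|\delta^{2}_{\tau}(e^{-z\tau})|\le c|z|^{2}$ (Lemma~\ref{Lemma 2.3}) and $\|\tau\widetilde{\mathcal{G}}(e^{-z\tau})-\widehat{\mathcal{G}}(z)\|\le c\tau^{\mu+3}\|q\|$ (Lemma~\ref{Lemma4.5}) yields $\|I_{31}\|\le c\tau^{\mu+3}\|q\|\int_{\Gamma^{\tau}_{\theta,\kappa}}|e^{zt_{n}}|\,|z|^{2-\alpha}\,|dz|\le c\tau^{\mu+3}t_{n}^{\alpha-3}\|q\|$; for $I_{32}$, the first inequality of Lemma~\ref{addLemma 4.3} with $l=2$ gives the operator bound $c\tau^{2}|z|^{4-\alpha}$, hence $\|I_{32}\|\le c\tau^{2}\|q\|\int_{\Gamma^{\tau}_{\theta,\kappa}}|e^{zt_{n}}|\,|z|^{1-\alpha-\mu}\,|dz|\le c\tau^{2}t_{n}^{\alpha+\mu-2}\|q\|$. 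Adding $I_{1},\dots,I_{4}$ by the triangle inequality gives the claim. I expect the only genuinely delicate step to be the $A$-absorbing estimate for $\delta^{2}_{\tau}\gamma_{2}$ displayed above: one must track the cancellations carefully so that the $\tau^{3}|z|^{2}$ and $\tau^{2}|z|$ pieces both collapse to $c\tau^{2}|z|$ using $\tau|z|=O(1)$ on $\Gamma^{\tau}_{\theta,\kappa}$; everything else is a routine repetition of the contour estimates already carried out for ID1-BDF2.
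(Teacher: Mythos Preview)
Your proposal is correct and follows essentially the same route as the paper: the same $I_{1}-I_{2}+I_{3}-I_{4}$ contour splitting, the same use of Lemma~\ref{Lemma4.5} for $I_{31}$ and of Lemma~\ref{addLemma 4.3} (with $l=2$) for $I_{32}$, and the same tail estimates for $I_{2}$ and $I_{4}$. You are in fact slightly more careful than the paper in one place: for $I_{1}$ the paper simply cites \eqref{add3.161}, \eqref{add3.162} and Lemma~\ref{addLemma 4.3}, whereas you correctly spell out that what is really needed is the $A$-absorbing $\delta^{2}_{\tau}\gamma_{2}$ analogue of Lemma~\ref{addLemma 3.6} (proved by the same four-term identity), and you rightly invoke Theorem~\ref{addtheorema3.2} rather than Theorem~\ref{addtheorema3.1} for the integer case.
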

\begin{proof}
From Theorem \ref{addtheorema3.1}, the desired results is obtained with $\mu \in \mathbb{N}$. We next prove the case $\mu \notin \mathbb{N}$.
Subtracting \eqref{LT2} from \eqref{DLT2}, we obtain
\begin{equation*}
V^{n}-V(t_{n})=I_{1}-I_{2} + I_{3} - I_{4}
\end{equation*}
with
\begin{equation*}
\begin{split}
I_{1} =& \frac{1}{2\pi i} \int_{\Gamma^{\tau}_{\theta,\kappa}} e^{zt_{n}}  \left[ \left( \delta^{\alpha}_{\tau}(e^{-z\tau}) -A\right)^{-1} \delta^{2}_{\tau}(e^{-z\tau})  \frac{e^{-z\tau}+e^{-2z\tau}}{2(1-e^{-z\tau})^3} \tau^{3}  - (z^{\alpha}-A)^{-1} z^{-1}\right] Av dz, \\
I_{2} =& \frac{1}{2\pi i} \int_{\Gamma_{\theta,\kappa}\backslash \Gamma^{\tau}_{\theta,\kappa}} e^{zt_{n}}  (z^{\alpha}-A)^{-1} z^{-1}  Av dz,\\
I_{3} =& \frac{1}{2\pi i}\int_{\Gamma^{\tau}_{\theta,\kappa}} e^{zt_{n}} \left[\left( \delta^{\alpha}_{\tau}(e^{-z\tau}) -A\right)^{-1} \delta^{2}_{\tau}(e^{-z\tau}) \tau \widetilde{\mathcal{G}} (e^{-z\tau})
-   (z^{\alpha}-A)^{-1} z^{2} \widehat{\mathcal{G}}(z)\right] dz, \\
I_{4} =& \frac{1}{2\pi i} \int_{\Gamma_{\theta,\kappa}\backslash \Gamma^{\tau}_{\theta,\kappa}} e^{zt_n}  (z^{\alpha}-A)^{-1} z^{2} \widehat{\mathcal{G}}(z) dz.
\end{split}
\end{equation*}
Using  \eqref{add3.161}, \eqref{add3.162} and Lemma \ref{addLemma 4.3}, we estimate  $I_{1}$ and $I_{2}$ as following
\begin{equation*}
\left\|I_{1}\right\|  \leq c\tau^{2} t_{n}^{-2} \left\| v \right\| \quad {\rm and} \quad
\left\|I_{2}\right\|  \leq c\tau^{2} t_{n}^{-2} \left\| v \right\|.
\end{equation*}
By \eqref{add3.16}, we estimate that $I_{4}$ is similar to $I_{2}$ as following
\begin{equation*}
\begin{split}
\left\|I_{4}\right\|
&\leq c\int_{\Gamma_{\theta,\kappa}\backslash \Gamma^{\tau}_{\theta,\kappa}} {\left|e^{zt_{n}} \right||z|^{-\alpha}\left\|z^{2} \widehat{\mathcal{G}}(z)\right\| } |dz| \\
&\leq c \left\| q \right\| \int_{\Gamma_{\theta,\kappa}\backslash \Gamma^{\tau}_{\theta,\kappa}} {\left|e^{zt_{n}}\right||z|^{-\alpha} |z|^{-\mu-1} }  |dz| \leq c\tau^{2} t^{\alpha+\mu-2}_{n} \left\| q \right\|.
\end{split}
\end{equation*}
Finally we consider $I_{3}= I_{31} + I_{32}$ with
\begin{equation*}
\begin{split}
I_{31} =& \frac{1}{2\pi i}\int_{\Gamma^{\tau}_{\theta,\kappa}} e^{zt_{n}} \left( \delta^{\alpha}_{\tau}(e^{-z\tau}) -A\right)^{-1} \delta^{2}_{\tau}(e^{-z\tau})  \left(\tau \widetilde{\mathcal{G}} (e^{-z\tau})
-   \widehat{\mathcal{G}}(z)\right) dz, \\
I_{32} =& \frac{1}{2\pi i} \int_{\Gamma^{\tau}_{\theta,\kappa}} e^{zt_n}  \left(\left( \delta^{\alpha}_{\tau}(e^{-z\tau}) -A\right)^{-1} \delta^{2}_{\tau}(e^{-z\tau})  - (z^{\alpha}-A)^{-1} z^{2} \right) \widehat{\mathcal{G}}(z) dz.
\end{split}
\end{equation*}
According to \eqref{discrete fractional resolvent estimate} and Lemmas \ref{Lemma 2.3} and \ref{Lemma4.5}, there exists
\begin{equation*}
\left\|I_{31}\right\|
\leq c \tau^{\mu+3} \left\| q \right\|  \int_{\Gamma^{\tau}_{\theta,\kappa}} \left|e^{z t_{n}}\right| |z|^{2-\alpha} |dz|
\leq c\tau^{\mu+3}  t_{n}^{\alpha-3} \left\| q \right\|.
\end{equation*}
From Lemma \ref{addLemma 4.3}, we estimate $I_{32}$ as following
\begin{equation*}
\begin{split}
\left\|I_{32}\right\|
\leq &  c  \tau^{2} \left\| q \right\| \int_{\Gamma^{\tau}_{\theta,\kappa}} \left|e^{z t_{n}}\right| |z|^{4-\alpha} |z|^{-\mu-3}  |dz| \\
\leq &   c \tau^{2} \left\| q \right\|  \int_{\Gamma^{\tau}_{\theta,\kappa}} \left|e^{z t_{n}}\right|  |z|^{1-\alpha-\mu} |dz|
\leq c\tau^{2}  t_{n}^{\alpha+\mu-2}\left\| q \right\|.
\end{split}
\end{equation*}
By the triangle inequality,  the desired result is obtained.
\end{proof}

\section{Convergence analysis: Source function $t^{\mu}\circ f(x,t)$ with $\mu>-1$}
Based on the  discussion  of  Section 3 and 4, we now analyse  the error estimates for subdiffusion \eqref{fee}  with the singular  source term  $t^{\mu}\circ f(x,t)$.
\subsection{Convergence analysis: Convolution source function $t^{\mu} \ast f(t)$, $\mu>-1$}
Let $f(t) = f(0) + tf'(0) + t\ast f''(t)$.
Then  we obtain
\begin{equation*}
g(t) =  t^{\mu} \ast f(t) =  \frac{t^{\mu+1}  f(0)}{\mu+1}   + \frac{t^{\mu+2}  f'(0)}{(\mu+1)(\mu+2)} +  t^{\mu} \ast t\ast f''(t).
\end{equation*}
Let $G(t) = J^1g(t)=\frac{1}{\mu+1} t^{\mu+1} \ast f(t)$ with $G(0)=0$.  It yields
\begin{equation*}
\begin{split}
G(t) & 
=  \frac{t^{\mu+2}  f(0)}{(\mu+1)(\mu+2)}   + \frac{t^{\mu+3}  f'(0)}{(\mu+1)(\mu+2)(\mu+3)} + \frac{1}{\mu+1} t^{\mu+1} \ast t \ast f''(t) \\
& =  \frac{t^{\mu+2}  f(0)}{(\mu+1)(\mu+2)}   + \frac{t^{\mu+3}  f'(0)}{(\mu+1)(\mu+2)(\mu+3)} +  \frac{t^{2}}{2}  \ast \left( t^{\mu} \ast f''(t)\right),
\end{split}
\end{equation*}
where we use
\begin{equation*}
t^{\mu+1} \ast t =  \int_{0}^{t} (t-s)^{\mu+1}  s ds
= \frac{\mu+1}{2}\int_{0}^{t} (t-s)^{\mu}  s^2 ds = \frac{\mu+1}{2} t^{2} \ast t^{\mu}.
\end{equation*}

\begin{lemma}\label{addLemma4.6}
Let $V(t_{n})$ and $V^{n}$ be the solutions of \eqref{rrfee} and \eqref{2.3}, respectively.
Let $v=0$, $G(t):=  \frac{t^2}{2} \ast \left( t^{\mu} \ast f''(t)\right)$ with $\mu>-1$ and $\int_{0}^{t}  (t - s)^{\alpha-1}  s^{\mu} \ast \left\| f''(s) \right\|ds<\infty $. Then
\begin{equation*}
\left\|V(t_{n})-V^{n}\right\| \leq c\tau^{2} \int_{0}^{t_{n}}  (t_{n} - s)^{\alpha-1}  s^{\mu} \ast \left\| f''(s) \right\|ds \leq c\tau^{2} \int_{0}^{t_{n}} (t_{n} - s)^{\alpha+\mu} \left\| f''(s) \right\|ds.
\end{equation*}
\end{lemma}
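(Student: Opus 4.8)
The plan is to reduce this statement to the already-established Lemma~\ref{lemma3.10} by absorbing the extra convolution factor into the ``second derivative'' slot. Set $h(t):=(t^{\mu}\ast f'')(t)$, so that the hypothesis becomes $G(t)=\tfrac{t^{2}}{2}\ast h(t)$, which is exactly the structure of the source datum treated in Lemma~\ref{lemma3.10}, with $h$ playing the role of $g''$ there. Since $\mu>-1$, the kernel $(s-r)^{\mu}$ is locally integrable, whence
\begin{equation*}
\|h(s)\|=\Big\|\int_{0}^{s}(s-r)^{\mu}f''(r)dr\Big\|\leq \int_{0}^{s}(s-r)^{\mu}\|f''(r)\|dr=(s^{\mu}\ast\|f''\|)(s),
\end{equation*}
and, combined with the assumed finiteness of $\int_{0}^{t}(t-s)^{\alpha-1}(s^{\mu}\ast\|f''\|)(s)ds$, this gives $\int_{0}^{t}(t-s)^{\alpha-1}\|h(s)\|ds<\infty$, i.e. $h$ satisfies the admissibility condition required in Lemma~\ref{lemma3.10}.

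First I would invoke Lemma~\ref{lemma3.10} with $g''$ replaced by $h$; equivalently, I would repeat its short argument. By \eqref{nas3.6} and \eqref{nas3.8} one has $V(t_{n})=\big((\mathscr{E}\ast\tfrac{t^{2}}{2})\ast h\big)(t_{n})$ and $V^{n}=\big((\mathscr{E}_{\tau}\ast\tfrac{t^{2}}{2})\ast h\big)(t_{n})$, so that $V(t_{n})-V^{n}=\int_{0}^{t_{n}}\big((\mathscr{E}-\mathscr{E}_{\tau})\ast\tfrac{t^{2}}{2}\big)(t_{n}-s)\,h(s)\,ds$, and then the pointwise bound \eqref{3.0003} applies. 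This yields
\begin{equation*}
\|V(t_{n})-V^{n}\|\leq c\tau^{2}\int_{0}^{t_{n}}(t_{n}-s)^{\alpha-1}\|h(s)\|ds\leq c\tau^{2}\int_{0}^{t_{n}}(t_{n}-s)^{\alpha-1}(s^{\mu}\ast\|f''\|)(s)ds,
\end{equation*}
which is the first claimed inequality.

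For the second inequality I would unfold the inner convolution and apply Fubini's theorem to swap the order of integration, obtaining $\int_{0}^{t_{n}}\|f''(r)\|\big(\int_{r}^{t_{n}}(t_{n}-s)^{\alpha-1}(s-r)^{\mu}ds\big)dr$; the inner integral is computed by the substitution $s=r+u(t_{n}-r)$ and equals $(t_{n}-r)^{\alpha+\mu}B(\alpha,\mu+1)$, where $B(\alpha,\mu+1)=\int_{0}^{1}(1-u)^{\alpha-1}u^{\mu}du$ is finite because $\alpha>0$ and $\mu+1>0$. Bounding the Beta factor by a constant gives the stated bound $c\tau^{2}\int_{0}^{t_{n}}(t_{n}-r)^{\alpha+\mu}\|f''(r)\|dr$. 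The only point requiring a word of justification --- and it is a mild one rather than a genuine obstacle --- is that the proof of Lemma~\ref{lemma3.10} nowhere uses that its source argument is literally a second derivative, only the convolution structure $G=\tfrac{t^{2}}{2}\ast(\cdot)$ and the weighted integrability of that argument, both of which we have verified for $h$; hence the reduction is legitimate.
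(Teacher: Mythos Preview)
Your proposal is correct and matches the paper's own proof essentially line for line: the paper also applies Lemma~\ref{lemma3.10} with $g''(t)=t^{\mu}\ast f''(t)$, passes the norm inside the convolution, and then uses the associativity $(t^{\alpha-1}\ast t^{\mu})\ast\|f''\|$ together with the Beta identity $t^{\alpha-1}\ast t^{\mu}=B(\alpha,\mu+1)\,t^{\alpha+\mu}$ to obtain the final bound. Your only addition is the explicit remark that Lemma~\ref{lemma3.10} uses just the convolution structure and weighted integrability, which the paper leaves implicit.
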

\begin{proof}
By Lemma \ref{lemma3.10} with $g''(t) =t^{\mu} \ast f''(t)$, we obtain 
\begin{equation*}
\begin{split}
\left\|V(t_{n})-V^{n}\right\|
&\leq c\tau^{2} \int_{0}^{t_{n}} (t_n-s)^{\alpha-1} \left\|  s^{\mu} \ast  f''(s)  \right\|ds \\
& 
\leq c\tau^{2} \int_{0}^{t_{n}}  (t_{n} - s)^{\alpha-1}  s^{\mu} \ast \left\| f''(s) \right\|ds\\
&= c\tau^{2}\left(t^{\alpha-1}\ast t^{\mu}\right) \ast \left\| f''(t) \right\|_{t=t_n}
  \leq c\tau^{2} \int_{0}^{t_{n}}  (t_{n} - s)^{\alpha+\mu} \left\| f''(s) \right\|ds .
\end{split}
\end{equation*}
The proof is completed.
\end{proof}
\begin{theorem}[ID1-BDF2]\label{addtheorema4.2}
Let $V(t_{n})$ and $V^{n}$ be the solutions of \eqref{rrfee} and \eqref{2.3}, respectively. Let $v\in L^{2}(\Omega)$, $g(t)=t^{\mu} \ast f(t) $ with $\mu>-1$ and  $f \in C^{1}([0,T]; L^{2}(\Omega))$,
$\int_{0}^{t}  (t - s)^{\alpha-1}  s^{\mu} \ast \left\| f''(s) \right\|ds<\infty $.  Then
\begin{equation*}
\begin{split}
&\left\|V^{n}-V(t_{n})\right\|\\
&\leq c\tau^{2}   \left( t^{-2}_{n} \|v\|    + t_{n}^{\alpha + \mu-1}\left\| f(0) \right\|   + t_{n}^{\alpha + \mu} \left\| f'(0) \right\|  +  \int_{0}^{t_{n}}  (t_{n} - s)^{\alpha-1}  s^{\mu} \ast \left\| f''(s) \right\|ds \right)\\
&\leq c\tau^{2}   \left( t^{-2}_{n} \|v\|    + t_{n}^{\alpha + \mu-1}\left\| f(0) \right\|   + t_{n}^{\alpha + \mu} \left\| f'(0) \right\|  +  \int_{0}^{t_{n}} (t_{n} - s)^{\alpha+\mu} \left\| f''(s) \right\|ds \right).
\end{split}
\end{equation*}
\end{theorem}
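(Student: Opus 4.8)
The plan is to exploit the linearity of both the continuous problem \eqref{rrfee} and the fully discrete scheme \eqref{2.3} in the data $(v,G)$, and to split the regularized source $G(t)=J^1g(t)=\frac{1}{\mu+1}t^{\mu+1}\ast f(t)$ according to the Taylor expansion $f(t)=f(0)+tf'(0)+t\ast f''(t)$ displayed above, namely
\begin{equation*}
G(t)=\underbrace{\tfrac{t^{\mu+2}f(0)}{(\mu+1)(\mu+2)}}_{=:G_1(t)}+\underbrace{\tfrac{t^{\mu+3}f'(0)}{(\mu+1)(\mu+2)(\mu+3)}}_{=:G_2(t)}+\underbrace{\tfrac{t^2}{2}\ast\bigl(t^{\mu}\ast f''(t)\bigr)}_{=:G_3(t)}.
\end{equation*}
Accordingly I would write $V=V^{(v)}+\sum_{i=1}^{3}V^{(i)}$ and $V^n=V^{n,(v)}+\sum_{i=1}^{3}V^{n,(i)}$, where $V^{(v)},V^{n,(v)}$ are driven by $v$ alone (with $G\equiv0$) and $V^{(i)},V^{n,(i)}$ by $G_i$ alone (with $v=0$); then it suffices to bound the four errors $\|V^{n,(v)}-V^{(v)}(t_n)\|$ and $\|V^{n,(i)}-V^{(i)}(t_n)\|$ separately and sum. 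Note that $G_1,G_2$ are the first integrals of the power-in-time sources $g_1(t)=\frac{f(0)}{\mu+1}\,t^{\mu+1}$ and $g_2(t)=\frac{f'(0)}{(\mu+1)(\mu+2)}\,t^{\mu+2}$.

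For the datum $v$ the error $V^{n,(v)}-V^{(v)}(t_n)$ coincides with the contribution $I_1-I_2$ already estimated in the proof of Theorem \ref{addtheorema3.1}: Lemma \ref{addLemma 3.6} and the resolvent estimate \eqref{fractional resolvent estimate} give $\|V^{n,(v)}-V^{(v)}(t_n)\|\le c\tau^2t_n^{-2}\|v\|$. For $g_1$ I would apply Theorem \ref{addtheorema4.1} with its exponent replaced by $\mu+1>0$ (admissible since $\mu>-1$ and that theorem covers both integer and non-integer exponents), obtaining $\|V^{n,(1)}-V^{(1)}(t_n)\|\le c\tau^{\mu+3}t_n^{\alpha-2}\|f(0)\|+c\tau^2t_n^{\alpha+\mu-1}\|f(0)\|$, and similarly for $g_2$ with exponent $\mu+2>0$, $\|V^{n,(2)}-V^{(2)}(t_n)\|\le c\tau^{\mu+4}t_n^{\alpha-2}\|f'(0)\|+c\tau^2t_n^{\alpha+\mu}\|f'(0)\|$. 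Because $t_n\ge\tau$ for $n\ge1$ and $\mu+1>0$, the factor $\tau^{\mu+3}t_n^{\alpha-2}=\tau^2(\tau/t_n)^{\mu+1}t_n^{\alpha+\mu-1}\le\tau^2t_n^{\alpha+\mu-1}$, and likewise $\tau^{\mu+4}t_n^{\alpha-2}\le\tau^2t_n^{\alpha+\mu}$, so these two errors reduce to $c\tau^2t_n^{\alpha+\mu-1}\|f(0)\|$ and $c\tau^2t_n^{\alpha+\mu}\|f'(0)\|$.

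The remainder $G_3(t)=\frac{t^2}{2}\ast(t^{\mu}\ast f''(t))$ is exactly the situation of Lemma \ref{addLemma4.6} (with $t^{\mu}\ast f''(t)$ in the role of $g''$), whose integrability hypothesis is assumed in the theorem, so $\|V^{n,(3)}-V^{(3)}(t_n)\|\le c\tau^2\int_0^{t_n}(t_n-s)^{\alpha-1}\,s^{\mu}\ast\|f''(s)\|\,ds$, and the convolution identity $t^{\alpha-1}\ast t^{\mu}=\frac{\Gamma(\alpha)\Gamma(\mu+1)}{\Gamma(\alpha+\mu+1)}t^{\alpha+\mu}$ (legitimate since $\mu>-1$) upgrades this to $c\tau^2\int_0^{t_n}(t_n-s)^{\alpha+\mu}\|f''(s)\|\,ds$. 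Summing the four estimates by the triangle inequality then delivers both displayed inequalities of the theorem. The only slightly delicate steps are justifying the convolution identities behind the decomposition of $G$ within the class $C^1([0,T];L^2(\Omega))$ — the key one, $t^{\mu+1}\ast t=\frac{\mu+1}{2}\,t^2\ast t^{\mu}$, is already recorded above — and checking that the earlier results may be invoked with shifted exponents; I expect no genuine obstacle here, the proof being essentially careful bookkeeping of the powers of $\tau$ and $t_n$.
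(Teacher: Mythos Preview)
Your proposal is correct and follows essentially the same approach as the paper: the paper's proof also splits the data into the $v$-contribution (handled as in Theorem \ref{addtheorema3.1}), the two power-type pieces $G_1,G_2$ (handled by Theorem \ref{addtheorema4.1} with shifted exponents $\mu+1,\mu+2$), and the remainder $G_3$ (handled by Lemma \ref{addLemma4.6}). Your observation that $\tau^{\mu+3}t_n^{\alpha-2}\le\tau^2 t_n^{\alpha+\mu-1}$ via $\tau/t_n\le1$ is the right way to absorb the extra term coming from Theorem \ref{addtheorema4.1}, and your use of the Beta-integral identity $t^{\alpha-1}\ast t^{\mu}=B(\alpha,\mu+1)\,t^{\alpha+\mu}$ to pass to the second displayed bound matches the computation in Lemma \ref{addLemma4.6}.
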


\begin{proof}
According to Theorem \ref{addtheorema4.1},  Lemma \ref{addLemma4.6}, and similar treatment of the initial data $v$ in Theorem  \ref{addtheorema3.1},   the desired result is obtained.
\end{proof}


\subsection{Convergence analysis: product source function $t^{\mu}  f(t)$, $\mu>0$}
Let $G(t) = J^1g(t)$ and  $f(t) = f(0) + tf'(0) + t\ast f''(t)$.
Then we have
\begin{equation*}
G(t) = 1 \ast ( t^{\mu} f(t)) =  \frac{t^{\mu+1}  f(0)}{\mu+1}   + \frac{t^{\mu+2}  f'(0)}{\mu+2} + 1 \ast \left[ t^{\mu} \left( t \ast  f''(t) \right) \right].
\end{equation*}
Let $h(t)= t^{\mu} \left( t \ast  f''(t) \right) $ with $h(0)=0$. It leads to
\begin{equation*}
h'(t) = \mu t^{\mu-1} \left( t \ast  f''(t) \right) + t^{\mu} \left( 1 \ast  f''(t) \right)
\end{equation*}
with $h'(0)=0$, since
\begin{equation*}
\begin{split}
\left| h'(t) \right|
&\leq \left| \mu t^{\mu-1} \int_{0}^{t} (t-s)   f''(s) ds \right| +  \left| t^{\mu} \int_{0}^{t}   f''(s) ds \right| \leq (\mu+1) t^{\mu} \int_{0}^{t}  \left| f''(s) \right| ds,~\mu>0.
\end{split}
\end{equation*}
Moreover, there exists
\begin{equation}\label{addeq5.5}
h''(t) = \mu \left( \mu-1 \right) t^{\mu-2} \left( t \ast  f''(t) \right) + 2 \mu t^{\mu-1} \left( 1 \ast  f''(t) \right) + t^{\mu}   f''(t).
\end{equation}
Thus one has
\begin{equation}\label{eq5.5}
1 \ast h(t) = t h(0) + \frac{t^2}{2} h'(0) + \frac{t^2}{2} \ast h''(t) = \frac{t^2}{2} \ast h''(t).
\end{equation}

\begin{lemma}\label{Lemma5.7}
Let $V(t_{n})$ and $V^{n}$ be the solutions of \eqref{rrfee} and \eqref{2.3}, respectively.
Let $v=0$, $G(t)= 1 \ast \left[ t^{\mu} \left( t \ast  f''(t) \right) \right]$ with $\mu>0$ and $f \in C^{1}([0,T]; L^{2}(\Omega))$,
$\int_{0}^{t} \left\| f''(s) \right\| ds <\infty$,    $\int_{0}^{t} (t-s)^{\alpha-1} s^{\mu}  \left\| f''(s) \right\|ds<\infty$. Then
\begin{equation*}
\left\|V(t_{n})-V^{n}\right\| \leq c\tau^{2} \left( t_n^{\alpha + \mu -1} \int_{0}^{t_{n}}   \left\| f''(s) \right\|  ds + \int_{0}^{t_{n}} (t_n-s)^{\alpha-1} s^{\mu}  \left\| f''(s) \right\|ds \right).
\end{equation*}
\end{lemma}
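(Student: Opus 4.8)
The plan is to recast $G$ into the convolution form handled by Lemma~\ref{lemma3.10}. Writing $h(t)=t^{\mu}\bigl(t\ast f''(t)\bigr)$, the computations preceding the lemma give $h(0)=h'(0)=0$, so \eqref{eq5.5} yields $G(t)=1\ast h(t)=\frac{t^{2}}{2}\ast h''(t)$, with $h''$ made explicit in \eqref{addeq5.5}. Thus $G$ is precisely of the type $\frac{t^{2}}{2}\ast g''$ treated in Lemma~\ref{lemma3.10}, now with $g''=h''$. Once the hypothesis $\int_{0}^{t}(t-s)^{\alpha-1}\|h''(s)\|\,ds<\infty$ of that lemma is verified (which follows from the pointwise bound below together with the two integrability assumptions), Lemma~\ref{lemma3.10} gives immediately
\[
\|V(t_{n})-V^{n}\|\le c\tau^{2}\int_{0}^{t_{n}}(t_{n}-s)^{\alpha-1}\|h''(s)\|\,ds ,
\]
and it only remains to bound this integral.

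Next I would estimate $\|h''(s)\|$ term by term from \eqref{addeq5.5}. Since $s-r\le s$, the first term obeys $\bigl\|\mu(\mu-1)s^{\mu-2}(s\ast f''(s))\bigr\|\le c\,s^{\mu-2}\!\int_{0}^{s}(s-r)\|f''(r)\|\,dr\le c\,s^{\mu-1}\!\int_{0}^{s}\|f''(r)\|\,dr$; the second term obeys $\bigl\|2\mu s^{\mu-1}(1\ast f''(s))\bigr\|\le c\,s^{\mu-1}\!\int_{0}^{s}\|f''(r)\|\,dr$; and the third term is simply $s^{\mu}\|f''(s)\|$. Hence
\[
\|h''(s)\|\le c\,s^{\mu-1}\int_{0}^{s}\|f''(r)\|\,dr+s^{\mu}\|f''(s)\| .
\]

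Substituting this and splitting the integral in two, on the first piece I would use $\int_{0}^{s}\|f''(r)\|\,dr\le\int_{0}^{t_{n}}\|f''(r)\|\,dr$ for $s\le t_{n}$, which leaves $\bigl(\int_{0}^{t_{n}}\|f''(r)\|\,dr\bigr)\int_{0}^{t_{n}}(t_{n}-s)^{\alpha-1}s^{\mu-1}\,ds$; the Beta integral $\int_{0}^{t_{n}}(t_{n}-s)^{\alpha-1}s^{\mu-1}\,ds=\frac{\Gamma(\alpha)\Gamma(\mu)}{\Gamma(\alpha+\mu)}\,t_{n}^{\alpha+\mu-1}$ is finite because $\alpha,\mu>0$, producing the term $c\,t_{n}^{\alpha+\mu-1}\int_{0}^{t_{n}}\|f''(r)\|\,dr$. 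The second piece is exactly $\int_{0}^{t_{n}}(t_{n}-s)^{\alpha-1}s^{\mu}\|f''(s)\|\,ds$. Adding the two yields the claimed estimate. I do not expect a real obstacle here: the only point needing minor care is the singularity $s^{\mu-1}$ at $s=0$ when $0<\mu<1$, which is tamed by pairing it with $(t_{n}-s)^{\alpha-1}$ inside the Beta integral — and this is exactly why the hypotheses $\int_{0}^{t}\|f''(s)\|\,ds<\infty$ and $\int_{0}^{t}(t-s)^{\alpha-1}s^{\mu}\|f''(s)\|\,ds<\infty$ are imposed.
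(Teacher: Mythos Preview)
Your proposal is correct and follows essentially the same approach as the paper: write $G=\frac{t^2}{2}\ast h''$ via \eqref{eq5.5}, invoke Lemma~\ref{lemma3.10} with $g''=h''$, then bound the three terms of $h''$ in \eqref{addeq5.5} using $s-r\le s$ and the Beta integral $\int_0^{t_n}(t_n-s)^{\alpha-1}s^{\mu-1}\,ds=B(\alpha,\mu)\,t_n^{\alpha+\mu-1}$. The only cosmetic difference is that the paper keeps the three integrals $I_1,I_2,I_3$ separate throughout, whereas you first merge the two $s^{\mu-1}\int_0^s\|f''\|$ contributions into a single pointwise bound on $\|h''(s)\|$ before integrating.
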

\begin{proof}
Let $h(t)= t^{\mu} \left( t \ast  f''(t) \right)$.
From  \eqref{eq5.5}, we have $G(t)= 1 \ast h(t) = \frac{t^2}{2} \ast h''(t)$.
According to Lemma \ref{lemma3.10}  and \eqref{addeq5.5}, it yields  
\begin{equation*}
\left\|V(t_{n})-V^{n}\right\|
\leq c\tau^{2} \int_{0}^{t_{n}} (t_n-s)^{\alpha-1} \left\|  h''(s) \right\|ds  \leq c\tau^{2} \left( I_{1} + I_{2} + I_{3} \right)
\end{equation*}
with
\begin{equation*}
\begin{split}
I_{1} & = \int_{0}^{t_{n}} (t_n-s)^{\alpha-1} \left\|   s^{\mu-2} \left( s \ast  f''(s) \right) \right\|ds, \\
I_{2} & = \int_{0}^{t_{n}} (t_n-s)^{\alpha-1} \left\|  s^{\mu-1} \left( 1 \ast  f''(s) \right) \right\|ds
~~{\rm and}~~I_{3}  = \int_{0}^{t_{n}} (t_n-s)^{\alpha-1} \left\|  s^{\mu}   f''(s) \right\|ds.
\end{split}
\end{equation*}
We  estimate  $I_{1}$  as following
\begin{equation*}
\begin{split}
I_{1}
& =   \int_{0}^{t_{n}} (t_n-s)^{\alpha-1} s^{\mu-1} \left\|  \int_{0}^{s} \frac{s-w}{s}   f''(w) dw \right\|ds \\
& \leq   \int_{0}^{t_{n}} (t_n-s)^{\alpha-1} s^{\mu-1} \int_{0}^{t_{n}}   \left\| f''(w) \right\| dw ds
 =  B(\alpha, \mu)  t_n^{\alpha + \mu -1} \int_{0}^{t_{n}}   \left\| f''(w) \right\|  dw,
\end{split}
\end{equation*}
since
\begin{equation*}
\int_{0}^{t_{n}} (t_n-s)^{\alpha-1} s^{\mu-1} ds = t_n^{\alpha + \mu -1} \int_{0}^{1} (1-s)^{\alpha-1} s^{\mu-1} ds = B(\alpha, \mu) t_n^{\alpha + \mu -1}.
\end{equation*}
Similarly, we  estimate  $I_{2}$  as following
\begin{equation*}
I_{2} \leq   \int_{0}^{t_{n}} (t_n-s)^{\alpha-1} s^{\mu-1} \int_{0}^{t_{n}}   \left\| f''(w) \right\| dw ds
 =  B(\alpha, \mu)  t_n^{\alpha + \mu -1} \int_{0}^{t_{n}}   \left\| f''(w) \right\|  dw.
\end{equation*}
By the triangle inequality, we obtain
\begin{equation*}
\left\|V(t_{n})-V^{n}\right\| \leq c\tau^{2} \left( t_n^{\alpha + \mu -1} \int_{0}^{t_{n}}   \left\| f''(s) \right\|  ds + \int_{0}^{t_{n}} (t_n-s)^{\alpha-1} s^{\mu}  \left\| f''(s) \right\|ds \right).
\end{equation*}
The proof is completed.
\end{proof}
\begin{theorem}[ID1-BDF2]\label{Theorem5.8}
Let $V(t_{n})$ and $V^{n}$ be the solutions of \eqref{rrfee} and \eqref{2.3}, respectively. Let $v\in L^{2}(\Omega)$, $g(t)= t^{\mu}  f(t)$ with $\mu>0$ and $f \in C^{1}([0,T]; L^{2}(\Omega))$,  $\int_{0}^{t} \left\| f''(s) \right\| ds <\infty$,  $\int_{0}^{t} (t-s)^{\alpha-1} s^{\mu}  \left\| f''(s) \right\|ds<\infty$. Then
\begin{equation*}
\begin{split}
\left\|V^{n}-V(t_{n})\right\|
& \leq c\tau^{2}   \left( t^{-2}_{n} \|v\|    + t_{n}^{\alpha+ \mu -2} \left\| f(0) \right\|   + t_{n}^{\alpha+ \mu -1} \left\| f'(0) \right\| \right) \\
& \quad + c\tau^{2} \left( t_n^{\alpha + \mu -1} \int_{0}^{t_{n}}   \left\| f''(s) \right\|  ds + \int_{0}^{t_{n}} (t_n-s)^{\alpha-1} s^{\mu}  \left\| f''(s) \right\|ds \right).
\end{split}
\end{equation*}
\end{theorem}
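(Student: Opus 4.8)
The plan is to use the linearity of problem \eqref{rrfee} and of the scheme \eqref{2.3}, and to split the source into pieces that have already been analysed. Writing $f(t)=f(0)+tf'(0)+t\ast f''(t)$ and applying $J^1$ gives
\begin{equation*}
G(t)=J^1g(t)=\frac{f(0)}{\mu+1}t^{\mu+1}+\frac{f'(0)}{\mu+2}t^{\mu+2}+1\ast\bigl[t^{\mu}(t\ast f''(t))\bigr],
\end{equation*}
so that, together with the initial datum, \eqref{2.3} decouples into four subproblems: one driven by $Av$ with $g=0$, and three driven with $v=0$ by $g_1=t^{\mu}f(0)$, $g_2=t^{\mu+1}f'(0)$, and $g_3=t^{\mu}(t\ast f''(t))$. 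Denote the exact and discrete solutions of these subproblems by $V_j(t_n)$ and $V_j^n$, $j=0,1,2,3$; then $V(t_n)=\sum_j V_j(t_n)$, $V^n=\sum_j V_j^n$, and it suffices to bound each $\|V_j^n-V_j(t_n)\|$ and sum.

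For $j=0$ I would repeat the argument in the proof of Theorem \ref{addtheorema3.1}: the difference is exactly the quantity $I_1-I_2$ appearing there, and the estimates \eqref{add3.161}--\eqref{add3.162}, which use only Lemma \ref{addLemma 3.6} and the resolvent bound \eqref{fractional resolvent estimate}, give $\|V_0^n-V_0(t_n)\|\le c\tau^2 t_n^{-2}\|v\|$ for $v\in L^2(\Omega)$. For $j=1,2$, $g_1$ and $g_2$ are pure power source terms $t^{\beta}q$ with $\beta=\mu>0$ (resp.\ $\beta=\mu+1>0$), so Theorem \ref{addtheorema4.1} applies directly and gives $\|V_1^n-V_1(t_n)\|\le c\tau^{\mu+2}t_n^{\alpha-2}\|f(0)\|+c\tau^2 t_n^{\alpha+\mu-2}\|f(0)\|$ and $\|V_2^n-V_2(t_n)\|\le c\tau^{\mu+3}t_n^{\alpha-2}\|f'(0)\|+c\tau^2 t_n^{\alpha+\mu-1}\|f'(0)\|$; since $\tau\le t_n$ and $\mu>0$, the first summand in each is dominated by the second, which produces the terms $c\tau^2 t_n^{\alpha+\mu-2}\|f(0)\|$ and $c\tau^2 t_n^{\alpha+\mu-1}\|f'(0)\|$ in the claimed bound.

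For $j=3$, the identity \eqref{eq5.5} shows $J^1 g_3(t)=1\ast h(t)=\tfrac{t^2}{2}\ast h''(t)$ with $h(t)=t^{\mu}(t\ast f''(t))$, $h(0)=h'(0)=0$, and $h''$ given by \eqref{addeq5.5}, so Lemma \ref{Lemma5.7} applies and controls $\|V_3^n-V_3(t_n)\|$ by $c\tau^2\bigl(t_n^{\alpha+\mu-1}\int_0^{t_n}\|f''(s)\|ds+\int_0^{t_n}(t_n-s)^{\alpha-1}s^{\mu}\|f''(s)\|ds\bigr)$. Adding the four estimates and invoking the triangle inequality completes the proof. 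The only step that is not pure bookkeeping is the reduction of the remainder term $g_3$ to the form $\tfrac{t^2}{2}\ast h''$: this is precisely where $\mu>0$ is essential, guaranteeing $h'(0)=0$ and the finiteness of $\int_0^t(t-s)^{\alpha-1}\|h''(s)\|ds$ required by Lemma \ref{lemma3.10}; beyond that, everything is tracking powers of $\tau$ and $t_n$ and applying Theorems \ref{addtheorema3.1} and \ref{addtheorema4.1} and Lemma \ref{Lemma5.7} verbatim.
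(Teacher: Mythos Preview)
Your proof is correct and is essentially the paper's own argument spelled out in full: the paper's proof is a one-line reference to Theorem~\ref{addtheorema4.1}, Lemma~\ref{Lemma5.7}, and the treatment of $v$ in Theorem~\ref{addtheorema3.1}, and you have unpacked precisely those three ingredients via the same linear splitting of $g(t)=t^\mu f(0)+t^{\mu+1}f'(0)+t^\mu(t\ast f''(t))$. The absorption of the $\tau^{\mu+2}t_n^{\alpha-2}$ and $\tau^{\mu+3}t_n^{\alpha-2}$ terms using $\tau\le t_n$ is the natural way to match the stated form of the bound.
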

\begin{proof}
According to Theorem \ref{addtheorema4.1},  Lemma \ref{Lemma5.7}, and similar treatment of the initial data $v$ in Theorem  \ref{addtheorema3.1},   the desired result is obtained.
\end{proof}

\subsection{Convergence analysis: product source function $t^{\mu}  f(t)$, $ -\alpha\leq \mu<0$}
Let $\mathcal{G}(t) = J^{2}g(t)$ and  $f(t) = f(0) + tf'(0) + t\ast f''(t)$.
Then we have
\begin{equation*}
\mathcal{G}(t) = t \ast \left( t^{\mu} f(t) \right) = \frac{t^{\mu+2} f(0)}{(\mu+1)(\mu+2)} + \frac{t^{\mu+3} f'(0)}{(\mu+2)(\mu+3)} + t \ast \left[ t^{\mu} \left( t \ast f''(t)\right) \right].
\end{equation*}
Let $h(t)= t^{\mu} \left( t \ast  f''(t) \right) $ with $h(0)=0$. It leads to
\begin{equation*}
h'(t) = \mu t^{\mu-1} \left( t \ast  f''(t) \right) + t^{\mu} \left( 1 \ast  f''(t) \right),
\end{equation*}
which implies
\begin{equation*}
\left| h'(0) \right| \leq (\mu+1) \int_{0}^{t} s^{\mu} \left| f''(s) \right| ds,
\end{equation*}
since
\begin{equation*}
\begin{split}
\left| h'(t) \right|
&   \leq (\mu+1) t^{\mu} \int_{0}^{t}  \left| f''(s) \right| ds  \leq (\mu+1) \int_{0}^{t} s^{\mu} \left| f''(s) \right| ds ~~{\rm with}~ -1<\mu<0.
\end{split}
\end{equation*}
Thus we get
\begin{equation}\label{addnneq5.5}
t \ast h(t) = \frac{t^{2}}{2} h(0) + \frac{t^3}{6} h'(0) + \frac{t^3}{6} \ast h''(t) = \frac{t^3}{6} h'(0) + \frac{t^3}{6} \ast h''(t).
\end{equation}

\begin{lemma}\label{addnnLemma5.7}
Let $V(t_{n})$ and $V^{n}$ be the solutions of \eqref{rrrfee} and \eqref{4.1}, respectively.
Let $v=0$, $\mathcal{G}(t)= t \ast \left[ t^{\mu} \left( t \ast  f''(t) \right) \right]$ with $-\alpha\leq \mu<0$ and
$f \in C^{1}([0,T]; L^{2}(\Omega))$,  $\int_{0}^{t} s^{\frac{\mu-1}{2}} \left\| f''(s) \right\| ds <\infty$,  $\int_{0}^{t} (t-s)^{\alpha-1} s^{\mu}  \left\| f''(s) \right\|ds$. Then
\begin{equation*}
\left\|V(t_{n})-V^{n}\right\| \leq
c\tau^{2} \left( t_n^{\alpha + \frac{\mu -1}{2}} \int_{0}^{t_{n}}  s^{\frac{\mu -1}{2}} \left\| f''(s) \right\|  ds + \int_{0}^{t_{n}} (t_n-s)^{\alpha-1} s^{\mu} \left\| f''(s) \right\|ds \right).
\end{equation*}
\end{lemma}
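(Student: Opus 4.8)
The plan is to imitate the proof of Lemma~\ref{Lemma5.7}, but now anchored on the \emph{second} integral reformulation \eqref{rrrfee}/\eqref{4.1}: I would use \eqref{addnneq5.5} to write $\mathcal{G}$ as a single polynomial term plus a twice-regularised remainder, estimate the polynomial term by the already-established bounds for sources of the form $t^{\mu}q(x)$, and estimate the remainder by the ID2 counterpart of Lemma~\ref{lemma3.10}; the genuinely new ingredient is a weighted splitting of $h''$ adapted to the range $-\alpha\le\mu<0$.

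Concretely, with $h(t)=t^{\mu}\bigl(t\ast f''(t)\bigr)$ and $h(0)=0$, identity \eqref{addnneq5.5} gives
\begin{equation*}
\mathcal{G}(t)=t\ast h(t)=\frac{t^{3}}{6}\,h'(0)+\frac{t^{3}}{6}\ast h''(t),
\end{equation*}
and since the solution maps of \eqref{rrrfee} and \eqref{4.1} are linear in the source, the error decomposes as $V(t_{n})-V^{n}=E_{1}+E_{2}$, where $E_{1}$ is produced by $\frac{t^{3}}{6}h'(0)$ and $E_{2}$ by $\frac{t^{3}}{6}\ast h''(t)$.

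For $E_{1}$, note that $\frac{t^{3}}{6}h'(0)=J^{2}\!\bigl(t\,h'(0)\bigr)$ is a source of the smooth form $t^{\mu}q(x)$ with $\mu=1\in\mathbb{N}$, so Theorem~\ref{theorema4.1} (equivalently Theorem~\ref{addtheorema3.2} applied to $g(t)=t\,h'(0)$, for which $g(0)=0$ and $g''\equiv0$) yields $\|E_{1}\|\le c\tau^{2}t_{n}^{\alpha-1}\|h'(0)\|$. Combining this with the bound $\|h'(0)\|\le(\mu+1)\int_{0}^{t_{n}}s^{\mu}\|f''(s)\|\,ds$ obtained above and with $s^{\mu}=s^{\frac{\mu-1}{2}}s^{\frac{\mu+1}{2}}\le t_{n}^{\frac{\mu+1}{2}}s^{\frac{\mu-1}{2}}$ for $0\le s\le t_{n}$ (legitimate since $\mu+1>0$) gives $\|E_{1}\|\le c\tau^{2}t_{n}^{\alpha+\frac{\mu-1}{2}}\int_{0}^{t_{n}}s^{\frac{\mu-1}{2}}\|f''(s)\|\,ds$, which already lies inside the asserted bound. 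For $E_{2}$, I would invoke the ID2 counterpart of Lemma~\ref{lemma3.10} — proved exactly as there by the operational-calculus argument of Section~3 applied to the scheme \eqref{4.1}, with $\delta^{2}_{\tau}$ and $\gamma_{2}$ in place of $\delta_{\tau}$ and $\gamma_{1}$ (the ``similarly augmented'' ID2 analysis) — which gives $\|E_{2}\|\le c\tau^{2}\int_{0}^{t_{n}}(t_{n}-s)^{\alpha-1}\|h''(s)\|\,ds$; one then substitutes the expression \eqref{addeq5.5} for $h''$.

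I expect the only real difficulty to be the estimation of $\int_{0}^{t_{n}}(t_{n}-s)^{\alpha-1}\|h''(s)\|\,ds$ when $\mu<0$: the crude bound $\|s^{\mu-2}(s\ast f'')\|\le s^{\mu-1}\int_{0}^{t_{n}}\|f''\|$ used in Lemma~\ref{Lemma5.7} is now useless, since $s^{\mu-1}$ fails to be integrable against $(t_{n}-s)^{\alpha-1}$ for $\mu\le0$. The remedy is to transfer half of the singularity onto $f''$: using $(s-w)\,w^{-\frac{\mu-1}{2}}\le s^{\frac{3-\mu}{2}}$ and $w^{-\frac{\mu-1}{2}}\le s^{\frac{1-\mu}{2}}$ for $0\le w\le s$, one obtains
\begin{equation*}
\bigl\|s^{\mu-2}\!\left(s\ast f''(s)\right)\bigr\|\le s^{\frac{\mu-1}{2}}\!\int_{0}^{s}\! w^{\frac{\mu-1}{2}}\|f''(w)\|\,dw,\qquad
\bigl\|s^{\mu-1}\!\left(1\ast f''(s)\right)\bigr\|\le s^{\frac{\mu-1}{2}}\!\int_{0}^{s}\! w^{\frac{\mu-1}{2}}\|f''(w)\|\,dw,
\end{equation*}
while the third term of \eqref{addeq5.5} contributes exactly $\int_{0}^{t_{n}}(t_{n}-s)^{\alpha-1}s^{\mu}\|f''(s)\|\,ds$. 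Since $-\alpha\le\mu<0$ and $\alpha<1$ force $\frac{\mu-1}{2}>-1$, the Beta-type integral $\int_{0}^{t_{n}}(t_{n}-s)^{\alpha-1}s^{\frac{\mu-1}{2}}\,ds=B\!\bigl(\alpha,\tfrac{\mu+1}{2}\bigr)t_{n}^{\alpha+\frac{\mu-1}{2}}$ converges, and bounding $\int_{0}^{s}(\cdot)\le\int_{0}^{t_{n}}(\cdot)$ gives $\|E_{2}\|\le c\tau^{2}t_{n}^{\alpha+\frac{\mu-1}{2}}\int_{0}^{t_{n}}s^{\frac{\mu-1}{2}}\|f''(s)\|\,ds+c\tau^{2}\int_{0}^{t_{n}}(t_{n}-s)^{\alpha-1}s^{\mu}\|f''(s)\|\,ds$. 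Adding the bound for $E_{1}$ completes the estimate, and the two standing hypotheses on $f''$ are precisely what make these two right-hand sides finite.
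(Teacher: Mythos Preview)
Your proposal is correct and follows essentially the same route as the paper: decompose $\mathcal{G}$ via \eqref{addnneq5.5}, bound the $\frac{t^{3}}{6}h'(0)$ contribution by the ID2 estimates for polynomial sources, bound the $\frac{t^{3}}{6}\ast h''$ contribution by the ID2 analogue of Lemma~\ref{lemma3.10} (packaged as Theorem~\ref{addtheorema3.2}), and then split $h''$ with the weighted trick $s^{\mu-2}(s-w)=s^{\frac{\mu-1}{2}}\cdot\frac{s-w}{s}\cdot s^{\frac{\mu-1}{2}}$ so that the Beta integral $B(\alpha,\tfrac{\mu+1}{2})$ converges for $-\alpha\le\mu<0$. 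The only cosmetic difference is that the paper absorbs the $h'(0)$ term into the \emph{second} summand of the final bound (via $t_{n}^{\alpha-1}\le(t_{n}-s)^{\alpha-1}$), whereas you absorb it into the first (via $s^{\mu}\le t_{n}^{\frac{\mu+1}{2}}s^{\frac{\mu-1}{2}}$); both are valid.
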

\begin{proof}
Let  $h(t)= t^{\mu} \left( t \ast  f''(t) \right)$. From  \eqref{addnneq5.5}, we have
$$\mathcal{G}(t)= t \ast h(t) = \frac{t^3}{6} h'(0) + \frac{t^3}{6} \ast h''(t).$$
According to Theorems \ref{theorema4.1}, \ref{addtheorema3.2}  and \eqref{addeq5.5}, it yields  
\begin{equation*}
\begin{split}
\left\|V(t_{n})-V^{n}\right\|
& \leq c \tau^{2} \left( t_{n}^{\alpha-1}\left\| h'(0) \right\| +  \int_{0}^{t_{n}} (t_n-s)^{\alpha-1} \left\|  h''(s) \right\|ds \right) \\
& \leq c\tau^{2} \left( I_{1} + I_{2} + I_{3} + I_{4} \right)
\end{split}
\end{equation*}
with
\begin{equation*}
\begin{split}
I_{1} & =  t_{n}^{\alpha-1}\left\| h'(0) \right\| , \quad
I_{2}   = \int_{0}^{t_{n}} (t_n-s)^{\alpha-1} \left\|   s^{\mu-2} \left( s \ast  f''(s) \right) \right\|ds, \\
I_{3} & = \int_{0}^{t_{n}} (t_n-s)^{\alpha-1} \left\|  s^{\mu-1} \left( 1 \ast  f''(s) \right) \right\|ds ~~{\rm and}~~
I_{4}   = \int_{0}^{t_{n}} (t_n-s)^{\alpha-1} \left\|  s^{\mu}   f''(s) \right\|ds.
\end{split}
\end{equation*}
Since
\begin{equation*}
\begin{split}
I_{1}  =  t_{n}^{\alpha-1}\left\| h'(0) \right\| \leq c t_{n}^{\alpha-1}  \int_{0}^{t_{n}} s^{\mu} \left\| f''(s) \right\| ds
\leq c   \int_{0}^{t_{n}} (t_{n}-s)^{\alpha-1}  s^{\mu} \left\| f''(s) \right\| ds,
\end{split}
\end{equation*}
and
\begin{equation*}
\begin{split}
I_{2}
& =   \int_{0}^{t_{n}} (t_n-s)^{\alpha-1} s^{\frac{\mu-1}{2}} \left\|  \int_{0}^{s} \frac{s-w}{s} s^{\frac{\mu-1}{2}}  f''(w) dw \right\|ds \\
& \leq   \int_{0}^{t_{n}} (t_n-s)^{\alpha-1} s^{\frac{\mu-1}{2}} \int_{0}^{t_{n}}  w^{\frac{\mu-1}{2}} \left\| f''(w) \right\| dw ds
\leq  c  t_n^{\alpha + \frac{\mu-1}{2}} \int_{0}^{t_{n}} w^{\frac{\mu-1}{2}}  \left\| f''(w) \right\|  dw,
\end{split}
\end{equation*}
where we use
\begin{equation*}
\int_{0}^{t_{n}} (t_n-s)^{\alpha-1} s^{\frac{\mu-1}{2}} ds = t_n^{\alpha + \frac{\mu -1}{2}} \int_{0}^{1} (1-s)^{\alpha-1} s^{\frac{\mu-1}{2}} ds
 = B\left(\alpha, \frac{\mu+1}{2}\right) t_n^{\alpha + \frac{\mu -1}{2}}.
\end{equation*}
Similarly, we  estimate  $I_{3}$  as following
\begin{equation*}
I_{3} \leq   \int_{0}^{t_{n}} (t_n-s)^{\alpha-1} s^{\mu-1} \int_{0}^{t_{n}}   \left\| f''(w) \right\| dw ds
\leq  c  t_n^{\alpha + \frac{\mu-1}{2}} \int_{0}^{t_{n}} w^{\frac{\mu-1}{2}}  \left\| f''(w) \right\|  dw.
\end{equation*}
By the triangle inequality, we obtain
\begin{equation*}
\left\|V(t_{n})-V^{n}\right\| \leq
c\tau^{2} \left( t_n^{\alpha + \frac{\mu -1}{2}} \int_{0}^{t_{n}}  s^{\frac{\mu -1}{2}} \left\| f''(s) \right\|  ds + \int_{0}^{t_{n}} (t_n-s)^{\alpha-1} s^{\mu} \left\| f''(s) \right\|ds \right).
\end{equation*}
The proof is completed.
\end{proof}

\begin{theorem}[ID2-BDF2]\label{Theorem5.8888}
Let $V(t_{n})$ and $V^{n}$ be the solutions of \eqref{rrrfee} and \eqref{4.1}, respectively.
Let $v\in L^{2}(\Omega)$, $g(t)= t^{\mu}  f(t)$ with $-\alpha\leq \mu<0$ and  $f \in C^{1}([0,T]; L^{2}(\Omega))$,
$\int_{0}^{t}s^{\frac{\mu -1}{2}} \left\| f''(s) \right\| ds <\infty$, $\int_{0}^{t} (t-s)^{\alpha-1} s^{\mu}  \left\| f''(s) \right\|ds$. Then
\begin{equation*}
\begin{split}
\left\|V^{n}-V(t_{n})\right\|
& \leq c\tau^{2}   \left( t^{-2}_{n} \|v\|  + t_{n}^{\alpha+ \mu -2} \left\| f(0) \right\| + t_{n}^{\alpha+ \mu -1} \left\| f'(0) \right\| \right) \\
&  + c\tau^{2} \left( t_n^{\alpha + \frac{\mu -1}{2}} \int_{0}^{t_{n}}  s^{\frac{\mu -1}{2}} \left\| f''(s) \right\|  ds + \int_{0}^{t_{n}} (t_n-s)^{\alpha-1} s^{\mu} \left\| f''(s) \right\|ds \right).
\end{split}
\end{equation*}
\end{theorem}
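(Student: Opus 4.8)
The plan is to exploit the linearity of both the continuous solution representation \eqref{LT2} and the discrete one \eqref{DLT2} in the data pair $(v,g)$, and to split the source according to the Taylor expansion of $f$ at the origin, $f(t)=f(0)+tf'(0)+(t\ast f''(t))$, which gives
\begin{equation*}
g(t)=t^{\mu}f(t)=t^{\mu}f(0)+t^{\mu+1}f'(0)+t^{\mu}\bigl(t\ast f''(t)\bigr).
\end{equation*}
Accordingly $V^{n}-V(t_{n})$ decomposes into the error coming from the initial data $v$ plus the three errors coming from $g_{1}(t)=t^{\mu}f(0)$, $g_{2}(t)=t^{\mu+1}f'(0)$ and $g_{3}(t)=t^{\mu}(t\ast f''(t))$. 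I would estimate each term separately and recombine by the triangle inequality; since $f\in C^{1}([0,T];L^{2}(\Omega))$ we have $f(0),f'(0)\in L^{2}(\Omega)$.

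For $g_{1}$ and $g_{2}$ I would invoke Theorem \ref{theorema4.1}, whose admissible source exponent runs over $[-\alpha,\infty)$: the exponent is $\mu\ge-\alpha$ for $g_{1}$ and $\mu+1>0$ for $g_{2}$, both admissible. This yields, for $g_{1}$, the bound $c\tau^{\mu+3}t_{n}^{\alpha-3}\|f(0)\|+c\tau^{2}t_{n}^{\alpha+\mu-2}\|f(0)\|$ and, for $g_{2}$, the bound $c\tau^{\mu+4}t_{n}^{\alpha-3}\|f'(0)\|+c\tau^{2}t_{n}^{\alpha+\mu-1}\|f'(0)\|$. Since $n\ge1$ gives $\tau\le t_{n}$ and $\mu+1>0$, one has $\tau^{\mu+3}t_{n}^{\alpha-3}=\tau^{2}\tau^{\mu+1}t_{n}^{\alpha-3}\le\tau^{2}t_{n}^{\alpha+\mu-2}$ and likewise $\tau^{\mu+4}t_{n}^{\alpha-3}\le\tau^{2}t_{n}^{\alpha+\mu-1}$, so these first pieces are absorbed and $g_{1},g_{2}$ contribute precisely $c\tau^{2}t_{n}^{\alpha+\mu-2}\|f(0)\|+c\tau^{2}t_{n}^{\alpha+\mu-1}\|f'(0)\|$. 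The initial-data contribution $c\tau^{2}t_{n}^{-2}\|v\|$ is handled exactly as the terms $I_{1}$ and $I_{2}$ in the proofs of Theorems \ref{addtheorema3.1} and \ref{addtheorema3.2}, where the operator $A$ acting on $v$ is absorbed through resolvent identities so that only $\|v\|$ (and not $\|Av\|$) appears; this is why $v\in L^{2}(\Omega)$ suffices even though \eqref{4.1} formally requires $v\in\mathcal{D}(A)$.

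For the residual source $g_{3}(t)=t^{\mu}(t\ast f''(t))$ I would apply Lemma \ref{addnnLemma5.7} directly (with $v=0$), which already produces the two $f''$-dependent terms
\begin{equation*}
c\tau^{2}\Bigl(t_{n}^{\alpha+\frac{\mu-1}{2}}\int_{0}^{t_{n}}s^{\frac{\mu-1}{2}}\|f''(s)\|\,ds+\int_{0}^{t_{n}}(t_{n}-s)^{\alpha-1}s^{\mu}\|f''(s)\|\,ds\Bigr).
\end{equation*}
Adding the four contributions by the triangle inequality gives the asserted estimate.

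The main obstacle lies not in this assembly — which is just bookkeeping of exponents — but inside Lemma \ref{addnnLemma5.7}, where a referee would likely focus: with $h(t)=t^{\mu}(t\ast f''(t))$ one has $h(0)=0$ but, unlike the $\mu>0$ case of Lemma \ref{Lemma5.7}, $h'(0)$ need not vanish, so the expansion \eqref{addnneq5.5} of $t\ast h$ retains the boundary term $\tfrac{t^{3}}{6}h'(0)$, which must be controlled via the ID2-BDF2 estimates of Theorems \ref{theorema4.1} and \ref{addtheorema3.2}; moreover the most singular piece $s^{\mu-2}(s\ast f''(s))$ of $h''$ in \eqref{addeq5.5} is, for $\mu<0$, integrable against $(t_{n}-s)^{\alpha-1}$ only after one symmetrically pulls a factor $s^{(\mu-1)/2}$ out of the convolution and uses the finite Beta integral $\int_{0}^{1}(1-s)^{\alpha-1}s^{(\mu-1)/2}\,ds=B(\alpha,\tfrac{\mu+1}{2})$, which converges exactly because $\mu\ge-\alpha>-1$. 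Once Lemma \ref{addnnLemma5.7} is in hand, Theorem \ref{Theorem5.8888} follows in a few lines.
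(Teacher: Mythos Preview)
Your proposal is correct and follows essentially the same route as the paper: the paper's proof of Theorem~\ref{Theorem5.8888} is the one-line statement ``According to Theorem~\ref{theorema4.1}, Lemma~\ref{addnnLemma5.7}, and similar treatment of the initial data $v$ in Theorem~\ref{addtheorema3.1}, the desired result is obtained,'' and your decomposition $g=g_{1}+g_{2}+g_{3}$, handling of each piece, and absorption of the $\tau^{\mu+3}t_{n}^{\alpha-3}$ and $\tau^{\mu+4}t_{n}^{\alpha-3}$ terms via $\tau\le t_{n}$ are exactly the bookkeeping that sentence encodes. Your discussion of where the real work sits (inside Lemma~\ref{addnnLemma5.7}) also matches the paper's structure.
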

\begin{proof}
According to Theorem \ref{theorema4.1},  Lemma \ref{addnnLemma5.7}, and similar treatment of the initial data $v$ in Theorem  \ref{addtheorema3.1},   the desired result is obtained.
\end{proof}
\begin{remark}
Theorems  \ref{theorema4.1} and \ref{Theorem5.8888} are naturally extended to $ \mu > -1.$
\end{remark}

\section{Numerical results}\label{Se:numer}
We  numerically verify the above theoretical results  and the discrete
$L^2$-norm is used to measure the numerical errors.
In the space direction, it is discretized with the   spectral collocation method  with the Chebyshev-Gauss-Lobatto points \cite{STW:2011}.
Here we main focus on the time direction convergence order, since the convergence rate of the spatial discretization is well understood.
Since the analytic solutions is unknown, the order of the convergence of the numerical results are computed by the following formula
\begin{equation*}
  {\rm Convergence ~Rate}=\frac{\ln \left(||u^{N/2}-u^{N}||/||u^{N}-u^{2N}||\right)}{\ln 2}
\end{equation*}
with $u^N=V^N+v$ in  \eqref{2.3}.

In the  experiment,  several algorithms including the correction    BDF2 methods \cite{JLZ:2017}  are carried  out and compared with IDk-mehtod:
\begin{equation}\label{s2.3}
 {\rm BDF2~ Method:}~~~~  \partial^{\alpha}_{\tau} V^{n} - AV^{n}=  Av + g^{n}.
\end{equation}
\begin{equation}\label{corbdf2}
~~~~~{\rm Corr\!-\!BDF2~ Method:}~~~~  \partial^{\alpha}_{\tau} V^{n} - AV^{n}= \frac{3}{2} Av + \frac{1}{2} g^{0} + g^{n} .
\end{equation}

\begin{example}\label{EX2}
Let $T=1$ and $\Omega=(-1, 1)$. Consider subdiffusion \eqref{fee} with
$$v(x)=\sin(x)\sqrt{1-x^2}~~{\rm  and}~~  g(x,t)= (1+t^{\mu}+t^{\alpha\mu})\circ (1-t)^{\beta} e^x (1+\chi_{(0,1)}(x)).$$
\end{example}

Here $J^kg(x,t)=t^{k-1}\ast g(x,t)$, $k=1,2$  are calculated by JacobiGL Algorithm \cite{ChD:15,Hesthaven:07},
which is generating the nodes and weights of Gauss-Labatto integral with  the weighting function such as $(1-t)^\mu$ or $(1+t)^\mu$.

\begin{table}[!ht]
{\small \begin{center}
\caption{The discrete $L^2$-norm $||u^{N}-u^{2N}||$ and convergent order of schemes \eqref{s2.3}, \eqref{corbdf2} and \eqref{2.3}, \eqref{4.1} with $\beta=0$, $\alpha=0.7$.
Here $\circ$ denotes the dot product.}
\begin{tabular}{l r l l l l l l}
\hline
Scheme                          &  $\mu$    &  $N=50$      &  $N=100$    & $N=200$     & $N=400$     &  $N=800$      \\ \hline
    \multirow{4}{*}{BDF2}       &  0.8      &  2.4743e-03  & 1.1981e-03  & 5.8732e-04  & 2.9005e-04  & 1.4390e-04    \\
                                &           &              & 1.0462      & 1.0286      & 1.0178      & 1.0113        \\
                                &  -0.8     &  1.5948e-01  & 1.3256e-01  & 1.1109e-01  & 9.3707e-02  & 7.9450e-02    \\
                                &           &              & 0.26679     & 0.25489     & 0.24549     & 0.23811       \\ \hline
   \multirow{3}{*}{Corr-BDF2}  &  0.8      &  9.4381e-05  & 3.6107e-05  & 1.3189e-05  & 4.6888e-06  & 1.6386e-06    \\
                                &           &              & 1.3862      & 1.4529      & 1.4921      & 1.5168            \\
                                &  -0.8     &  NaN         & NaN         & NaN         & NaN         & NaN           \\ \hline
   \multirow{4}{*}{ID1-BDF2}    &  0.8      &  1.6660e-04  & 4.1216e-05  & 1.0249e-05  & 2.5553e-06  & 6.3792e-07    \\
                                &           &              & 2.0151      & 2.0077      & 2.0040      & 2.0021        \\
                                &  -0.8     &  6.7744e-03  & 3.0380e-03  & 1.3367e-03  & 5.8281e-04  & 2.5299e-04    \\
                                &           &              & 1.1570      & 1.1844      & 1.1976      & 1.2039        \\ \hline
   \multirow{4}{*}{ID2-BDF2}    &  0.8      &  3.2389e-04  & 7.9995e-05  & 1.9879e-05  & 4.9539e-06  & 1.2374e-06    \\
                                &           &              & 2.0175      & 2.0087      & 2.0046      & 2.0013        \\
                                &  -0.8     &  2.1611e-03  & 5.2769e-04  & 1.3018e-04  & 3.2292e-05  & 8.0280e-06    \\
                                &           &              & 2.0340      & 2.0192      & 2.0112      & 2.0081        \\ \hline
\end{tabular}\label{table:6.1}
\end{center}}
\end{table}

\begin{table}[!ht]
{\small \begin{center}
\caption{The discrete $L^2$-norm  $||u^{N}-u^{2N}||$ and convergent order of schemes  \eqref{2.3} and \eqref{4.1} with $\beta=1.9$, respectively.
Here $\circ$ denotes the dot product.}
\begin{tabular}{l l r l l l l l l}
\hline
Scheme                          &  $\alpha$             &  $\mu$    &  $N=50$      &  $N=100$    & $N=200$     &  $N=400$    &  $N=800$    \\ \hline
  \multirow{8}{*}{ID1-BDF2}     & \multirow{4}{*}{0.3}  &  0.5      &  1.5025e-03  & 3.9778e-04  & 1.0433e-04  & 2.7198e-05  & 7.0660e-06  \\
                                &                       &           &              & 1.9174      & 1.9307      & 1.9396      & 1.9445      \\
                                &                       &  -0.9     &  4.9903e-03  & 2.7664e-03  & 1.4020e-03  & 6.8259e-04  & 3.2574e-04  \\
                                &                       &           &              & 0.85109     & 0.98050     & 1.0384      & 1.0673      \\  \cline{2-8}
                                & \multirow{4}{*}{0.7}  &  0.5      &  6.8462e-04  & 1.8033e-04  & 4.6484e-05  & 1.1840e-05  & 2.9948e-06  \\
                                &                       &           &              & 1.9247      & 1.9558      & 1.9731      & 1.9831      \\
                                &                       &  -0.9     &  2.0722e-02  & 1.0219e-02  & 4.8849e-03  & 2.3017e-03  & 1.0770e-03  \\
                                &                       &           &              & 1.0199      & 1.0648      & 1.0856      & 1.0956      \\ \hline
  \multirow{8}{*}{ID2-BDF2}     & \multirow{4}{*}{0.3}  &  0.5      &  3.1810e-03  & 8.4340e-04  & 2.2164e-04  & 5.7938e-05  & 1.5180e-05  \\
                                &                       &           &              & 1.9152      & 1.9280      & 1.9356      & 1.9323      \\
                                &                       &  -0.9     &  4.6179e-03  & 1.1806e-03  & 3.0298e-04  & 7.7857e-05  & 2.0182e-05  \\
                                &                       &           &              & 1.9677      & 1.9622      & 1.9603      & 1.9478      \\  \cline{2-8}
                                &  \multirow{4}{*}{0.7} &  0.5      &  1.9266e-03  & 5.0536e-04  & 1.3015e-04  & 3.3167e-05  & 8.4027e-06  \\
                                &                       &           &              & 1.9307      & 1.9571      & 1.9724      & 1.9808      \\
                                &                       &  -0.9     &  7.2846e-03  & 1.8010e-03  & 4.4808e-04  & 1.1179e-04  & 2.7922e-05  \\
                                &                       &           &              & 2.0161      & 2.0070      & 2.0030      & 2.0013      \\ \hline
\end{tabular}\label{table:Ex13}
\end{center}}
\end{table}

\begin{table}[!ht]
{\small\begin{center}
\caption{The discrete $L^2$-norm  $||u^{N}-u^{2N}||$ and convergent order of schemes \eqref{s2.3} and \eqref{2.3} with $\beta=1.9$, respectively.
Here $\circ$ denotes the Laplace convolution.}
\begin{tabular}{c c c c c c c c c}
\hline
Scheme                          &  $\alpha$             &  $\mu$    &  $N=50$      &  $N=100$    & $N=200$     &  $N=400$    &  $N=800$     \\ \hline
  \multirow{8}{*}{ID1-BDF2}     & \multirow{4}{*}{0.3}  &  -0.2     &  6.4420e-05  & 1.2431e-05  & 2.6710e-06  & 6.1586e-07  & 1.4766e-07   \\
                                &                       &           &              & 2.3735      & 2.2185      & 2.1167      & 2.0603       \\
                                &                       &  -0.8     &  1.6132e-03  & 4.2435e-04  & 1.0992e-04  & 2.8213e-05  & 7.2033e-06   \\
                                &                       &           &              & 1.9266      & 1.9487      & 1.9621      & 1.9696       \\  \cline{2-8}
                                & \multirow{4}{*}{0.7}  &  -0.2     &  2.8145e-04  & 6.7873e-05  & 1.6649e-05  & 4.1218e-06  & 1.0253e-06   \\
                                &                       &           &              & 2.0520      & 2.0274      & 2.0141      & 2.0072       \\
                                &                       &  -0.8     &  6.3566e-04  & 1.7068e-04  & 4.4407e-05  & 1.1358e-05  & 2.8782e-06   \\
                                &                       &           &              & 1.8969      & 1.9425      & 1.9671      & 1.9805       \\ \hline
\end{tabular}\label{table:Ex51}
\end{center}}
\end{table}


For  subdiffusion PDEs model \eqref{fee}, it is natural appearing the low regularity/singular  term  such as
$$t^{\mu} f(x,t)~~{\rm or}~~t^{\mu}\ast f(x,t), ~~\mu>-1.$$
In this case, many popular time stepping schemes, including  the correction of high-order  BDF methods  may lose their high-order accuracy,
see \cite[Section 4.1]{JLZ:2017} and Lemma 3.2 in \cite{WZ:2020}, also see Table \ref{table:6.1}.
The correction   BDF2 methods recovers superlinear convergence order $\mathcal{O}(\tau^{1+\alpha\mu})$, provided that the source term behaves like $t^{\alpha\mu}$,
which is invalid  for $\mu<0$, since it is required the source function $g\in C([0,T];L^{2}(\Omega))$.

To fill in this gap,
the desired second-order convergence rate can be achieved by ID1-BDF2 with $\mu>0$ but it is still  likely to exhibit a  order reduction with $\mu<0$.
Furthermore,  ID2-BDF2 method has filled a gap with $-1<\mu<0$, see Tables \ref{table:6.1} and \ref{table:Ex13}.
Tables \ref{table:Ex51} shows that ID1-BDF2 recovers    second order convergence and this is in  agreement with the order of the convergence for $t^{\mu} \ast f(x,t), ~\mu>-1$.

\begin{remark}
For  Hadamard's finite-Part integral \cite[p. 233]{Diethelm:2010}
$$\int_0^ts^{\mu}ds=\frac{1}{1+\mu}t^{1+\mu},~\mu<-1,$$
of course the limit does not exist, and so Hadamard suggested simply to ignore the unbounded contribution.
In this case, we can similar provide 
\begin{equation*}
{\rm ID3-BDF2~ Method:}~ \partial^{\alpha}_{\tau} V^{n} - AV^{n}= \partial^{3}_{\tau}  \left( \frac{t^{3}_{n}}{6} Av + \mathbb{G}^{n} \right),~
\mathbb{G}=J^{3}g(x,t),
\end{equation*}
which also  recovers the  high-order accuracy
even for the hypersingul source term,  see Table \ref{table:6.4}.
\begin{table}[!ht]
{\small \begin{center}
\caption{The discrete $L^2$-norm $||u^{N}-u^{2N}||$ and convergent order  with $\beta=0$, $\alpha=0.7$.
Here $\circ$ denotes the dot product.}
\begin{tabular}{l r l l l l l l}
\hline
Scheme                          &  $\mu$    &  $N=50$      &  $N=100$    & $N=200$     & $N=400$     &  $N=800$      \\ \hline
    \multirow{1}{*}{ID2-BDF2}   &  -1.8     &  1.7275e-02  & 8.1527e-03  & 3.6909e-03  & 1.6393e-03  & 7.2110e-04    \\
                                &           &              & 1.0834      & 1.1433      & 1.1709      & 1.1848        \\\hline
   \multirow{1}{*}{ID3-BDF2}    &  -1.8     &  7.7995e-03  & 1.8929e-03  & 4.6855e-04  & 9.5882e-05  & 2.2325e-05    \\
                                &           &              & 2.0428      & 2.0143      & 2.2889      & 2.1026        \\ \hline
\end{tabular}\label{table:6.4}
\end{center}}
\end{table}
\end{remark}

\section{Conclusions}
Fractional  PDEs model   naturally    imply  a less smooth or low regularity source function $t^{\mu}\circ f(x,t)$
 in the  right-hand side, which  is likely to result in a severe order reduction in most existing
time-stepping schemes. To fill in this gap, we provides a new idea to obtain the second-order time-stepping schemes for subdiffusion, called IDk-BDF2 method.
The detailed theoretical analysis and numerical verifications are presented.
In the future studies, we will try to adapt the idea to higher order schemes and the nonlinear fractional models \cite{Li:2022}.


\begin{thebibliography}{10}


\bibitem{CD:15}
{\sc  M.H. Chen and    W.H. Deng},
{\em  Discretized fractional substantial calculus},
ESAIM: Math. Mod. Numer. Anal.,  49 (2015), pp. 373--394.


\bibitem{ChD:15} {\sc  M.H. Chen and    W.H. Deng},
{\em  High order algorithms for the fractional substantial diffusion equation with truncated L\'{e}vy flights},
SIAM J.  Sci. Comput.,  37 (2015), pp. A890--A917.


\bibitem{CJB:2021}
{\sc  M.H. Chen,   S.Z. Jiang, and  W.P. Bu},
{\em Two   $L1$ schemes on  graded meshes for  fractional Feynman-Kac equation},
J. Sci. Comput., 88 (2021), No. 58.

\bibitem{Chen:2020}
{\sc S. Chen, J. Shen, Z. Zhang, and Z. Zhou},
{\em  A spectrally accurate approximation to subdiffusion equations using
  the log orthogonal functions},
SIAM J. Sci. Comput., 42  (2020), pp. A849--A877.

\bibitem{Cuesta:06}
{\sc   E. Cuesta,  Ch. Lubich,  and  C. Palencia},
{\em Convolution quadrature time discretization of fractional diffusion-wave equations},
Math. Comput., 75  (2006), pp.  673--696.

\bibitem{Diethelm:2010}
{\sc K. Diethelm},
{\em The Analysis of Fractional Differential Equations},
Springer,  2010.


\bibitem{Hesthaven:07}{\sc J.S. Hesthaven and T. Warburton}, {\em Nodal Discontinuous Galerkin Methods: Algorithms, Analysis, and Applications},
 Springer, 2007.



\bibitem{HouXu:2017}
{\sc D.~Hou and C.~Xu},
{\em A fractional spectral method with applications to some singular problems},
Adv. Comput. Math., 43 (2017), pp. 911--944

\bibitem{JLZ:2016}
{\sc B. Jin,  R. Lazarov, and Z. Zhou},
{\em An analysis of the {$L1$} scheme for the subdiffusion equation with nonsmooth data},
IMA J. Numer. Anal., 36 (2016), pp. 197--221.

\bibitem{JLZ:2017}
{\sc B. Jin, B.Y. Li, and Z. Zhou},
{\em Correction of high-order {BDF} convolution quadrature for fractional evolution equations},
SIAM J. Sci. Comput., 39 (2017), pp. A3129--A3152.




\bibitem{Kopteva:2021}
{\sc   N. Kopteva},
{\em   Error analysis of an $L2$-type method on graded meshes for a fractional-order parabolic problem},
Math. Comp., 90  (2021), pp.  19--40.

\bibitem{Li:2022}
{\sc  W.~Li, and A.~Salgad},
{\em  Time fractional gradient flows: Theory and numerics},
arXiv:2101.00541.


\bibitem{Liao:2018}
{\sc H.-L. Liao, D.~Li, and J.~Zhang},
{\em  Sharp error estimate of the nonuniform {L}1 formula for linear
  reaction-subdiffusion equations},
 SIAM J. Numer. Anal., 56 (2018), pp. 1112--1133.

\bibitem{Lubich:86}
{\sc  Ch. Lubich},
{\em  Discretized fractional calculus},
SIAM J. Math. Anal., 17 (1986), pp. 704--719.

\bibitem{LST:1996}
{\sc  Ch. Lubich,  I.H. Sloan, and  V. Thom\'{e}e},
{\em Nonsmooth data error estimates for approximations of an evolution equation with a positive-type memory term},
Math. Comp., 65 (1996), pp. 1--17.


\bibitem{McLeanMustapha:2015}
{\sc  W.~McLean and K.~Mustapha},
{\em Time-stepping error bounds for fractional diffusion problems with
  non-smooth initial data},
J. Comput. Phys., 293 (2015),  pp. 201--217.

\bibitem{McLean:19}
{\sc  W. McLean, K. Mustapha, R. Ali, and  O.  Knio},
{\em Well-posedness of time-fractional advection-diffusion-reaction equations},
Fract. Calc. Appl. Anal., 22 (2019), pp. 918--944.


\bibitem{Metzler:19}
{\sc  R. Metzler},
{\em Brownian motion and beyond: first-passage, power spectrum, non-Gaussianity, and anomalous diffusion},
J. Stat. Mech.,   (2019), 114003.


\bibitem{Metzler:00}
{\sc  R. Metzler and J. Klafter},
{\em The random walk's guide to anomalous diffusion: a fractional dynamics approach},
Phys. Rep., 339 (2000),  pp. 1--77.


\bibitem{MustaphaAbdallahFurati:2014}
{\sc K.~Mustapha, B.~Abdallah, and K.~M. Furati},
{\em A discontinuous {P}etrov-{G}alerkin method for time-fractional
  diffusion equations},
SIAM J. Numer. Anal., 52 (2014), pp. 2512--2529.


\bibitem{Mustapha:20}
{\sc K. Mustapha},
{\em An L1 approximation for a fractional reaction-diffusion equation, a second-order error analysis over time-graded meshes},
 SIAM J. Numer. Anal., 58 (2020), pp. 1319--1338.




\bibitem{Podlubny:99}
{\sc I. Podlubny},
{\em  Fractional Differential Equations},
 Academic Press, 1999.


\bibitem{SY:11}
{\sc K. Sakamoto and M. Yamamoto},
{\em  Initial value/boundary value problems for fractional diffusion-wave equations and applications to some inverse problems},
J. Math. Anal. Appl., 382 (2011), pp. 426--447.


\bibitem{SBMB:03}
{\sc  R. Schumer, D.A. Benson, M.M. Meerschaert, and B. Baeumer},
{\em Fractal mobile/immobile solute transport},
Water Resour. Res., 39 (2003),  pp. 1--12.



\bibitem{STW:2011}
{\sc J. Shen,  T. Tang, and L. Wang},
{em Spectral Methods: Algorithms, Analysis and Applications},
Springer, (2011).


\bibitem{SZS:22}
{\sc J. Shen,  F. Zeng, and M. Stynes},
{\em Second-order error analysis of the averaged $L1$ scheme $\overline{L1}$ for time-fractional initial-value and subdiffusion problems},
http://dx.doi.org/10.13140/RG.2.2.24337.35685.


\bibitem{SC:2020}
{\sc J.K. Shi and  M.H. Chen},
{\em Correction of high-order BDF convolution quadrature for fractional Feynman-Kac equation with L\'{e}vy flight},
J. Sci. Comput., 85 (2020), No. 28.



\bibitem{SOG:2017}
{\sc  M. Stynes,  E. O'riordan,  and  J.L. Gracia},
{\em  Error analysis of a finite difference method on graded meshes for a time-fractional diffusion equation},
SIAM J. Numer. Anal.,  55 (2017), pp. 1057--1079.





\bibitem{Thomee:2006}
{\sc V. Thom\'{e}e},
{\em Galerkin Finite Element Methods for Parabolic Problems},
Springer,  2006.



\bibitem{WYY:2020}
{\sc Y.Y. Wang, Y.B. Yan, and Y. Yang},
{\em Two high-order  time discretization schemes for subdiffusion problems with nonsmooth data},
Fract. Calc. Appl. Anal., 23 (2020), pp. 1349--1380.

\bibitem{WZ:2020}
{\sc K. Wang and Z. Zhou},
{\em High-order time stepping schemes for semilinear subdiffusion equations},
SIAM J. Numer. Anal., 58  (2020), pp. 3226--3250.


\bibitem{YKF:2018}
{\sc Y.B. Yan, M. Khan, and N.J. Ford},
{\em An analysis of the modified {$L1$} scheme for time-fractional partial differential equations with nonsmooth data},
SIAM J. Numer. Anal., 56  (2018), pp. 210--227.


\bibitem{ZayernouriKarniadakis:2013}
{\sc M.~Zayernouri and G.~E. Karniadakis},
{\em  Fractional {S}turm-{L}iouville eigen-problems: theory and numerical
  approximation},  Comput. Phys.,  252 (2013), pp. 495--517.

\bibitem{ZT:2022}
{\sc H. Zhou and  W.Y. Tian},
{\em Two time-stepping schemes for sub-diffusion equations with singular source terms},
arXiv:2109.13473.


\end{thebibliography}
\end{document}